\numberwithin{equation}{section}
\theoremstyle{plain}
  \newtheorem{thm}{Theorem}[section]
  \newtheorem{prop}[thm]{Proposition}
  \newtheorem*{prop*}{Proposition}
  \newtheorem{cor}[thm]{Corollary}
  \newtheorem{lem}[thm]{Lemma}
\theoremstyle{remark}
  \newtheorem{rem}[thm]{Remark}
  \newtheorem*{rem*}{Remark}  
\theoremstyle{definition}
  \newtheorem{defn}[thm]{Definition}
  \newtheorem*{conv}{Convention of notation}
\newenvironment{assu}[1]
  {\tbassu}
  {\endtbassu}
\newcommand{\apa}{\alpha}
\newcommand{\ba}{\beta}
\newcommand{\ga}{\gamma}
\newcommand{\Ga}{\Gamma}
\newcommand{\da}{\delta}
\newcommand{\Da}{\Delta}
\newcommand{\vn}{\varepsilon}
\newcommand{\ta}{\theta}
\newcommand{\ka}{\kappa}
\newcommand{\la}{\lambda}
\newcommand{\La}{\Lambda}
\newcommand{\sa}{\sigma}
\newcommand{\vi}{\varphi}
\newcommand{\oa}{\omega}
\newcommand{\pl}{\partial}
\newcommand{\fc}{\frac}
\newcommand{\na}{\nabla}
\newcommand{\ol}{\overline}
\newcommand{\wt}{\widetilde}
\newcommand{\mb}{\mathbb}
\newcommand{\ml}{\mathcal}
\newcommand{\EQ}[1]{\begin{equation}\begin{split}#1\end{split}\end{equation}}
\newcommand{\R}{\mathbb{R}}
\begin{document}

\title{Mixed dimensional infinite soliton trains for nonlinear
Schr\"{o}dinger equations}

\author{
Liren Lin%
\thanks{Institute of Mathematics,
Academia Sinica,
Taipei, Taiwan, 
E-mail address: \texttt{b90201033@ntu.edu.tw}}
\and Tai-Peng Tsai%
\thanks{
Department of Mathematics, University of British Columbia, Vancouver BC
Canada V6T 1Z2, 
and Center for Advanced Study in Theoretical Sciences,
National Taiwan University,
Taipei, Taiwan, E-mail address: \texttt{ttsai@math.ubc.ca}}
}

\date{}

\maketitle

\begin{abstract}
In this note we construct mixed dimensional infinite soliton 
trains, which are solutions of nonlinear Schr\"odinger 
equations whose asymptotic profiles at time infinity consist of 
infinitely many solitons of multiple dimensions. For example
infinite line-point soliton trains in 2D space,
and infinite plane-line-point soliton trains in 3D space. This 
note extends the works of Le Coz, Li and Tsai 
\cite{LeCoz_Li_Tsai,LeCoz_Tsai}, where single dimensional trains
are considered. In our approach, spatial $L^\infty$ bounds for 
lower dimensional trains play an essential role.

\smallskip
\noindent{\it Keywords:}\quad infinite soliton train, mixed dimensional, 
mixed train, nonlinear Schr\"odinger equations.

\smallskip
\noindent{\it 2010 Mathematics Subject Classification:} 35Q55(35C08,35Q51).

\end{abstract}

%\tableofcontents

\section{Introduction}

In this paper, we consider the nonlinear Schr\"{o}dinger equation
\begin{equation}
i\pl_{t}u+\Da u+f(u)=0,\label{eq:NLSE}
\end{equation}
where $u=u(t,x)$ is a complex-valued function on 
$\mb R\times\mb R^{d}$, $d\ge 1$, and  $f:\mb C \to \mb C$ is the 
nonlinearity. Our goal is to construct 
\emph{mixed dimensional infinite soliton trains} (mixed trains), 
which are solutions of \eqref{eq:NLSE} whose asymptotic profiles 
at time infinity consist of infinitely many solitons of multiple
dimensions.

The nonlinear Schr\"{o}dinger equation \eqref{eq:NLSE} appears in 
various physical contexts, for example in nonlinear optics or in 
the modelling of Bose-Einstein condensates. Mathematically speaking, 
it is one of the model nonlinear dispersive PDE, along with the 
Korteweg-De Vries equation and the nonlinear wave equation. 
Its local Cauchy theory in the energy space $H^1(\R^d)$ is well 
understood (see e.g. \cite{Ca03} and the references cited therein). 
Its long time dynamics has two competing effects: First of all, if 
the nonlinearity is not too strong, the linear part of the equation 
can dominate and solutions may behave as if they were solutions to 
the free linear Schr\"odinger equation. This is the 
\emph{scattering effect}. On the other hand, in some cases the 
nonlinear term dominates and the solution tends to concentrate, with
possible blow-up in finite time. This is the \emph{focusing effect}. 
At the equilibrium between these two effects, one may encounter many
different types of structures that neither scatter nor focus. The most
common of these non-scattering global structures are the solitons, but
there exist also dark solitons, kinks, etc. A generic conjecture for
nonlinear dispersive PDE is the \emph{Soliton Resolution Conjecture}. 
Roughly speaking, it says that, as can be observed in physical 
settings, any global solution will eventually decompose at large time 
into a scattering part and well separated non-scattering structures, 
usually a sum of solitons. Apart from integrable cases (see e.g. 
\cite{ZaSh72}), such conjecture is usually out of reach. Intermediate 
steps toward this conjecture are existence and stability results of 
configurations with well separated non-scattering structure, like 
multi-solitons, multi-kinks, infinite soliton and kink-soliton trains, 
etc. See \cite{LeCoz-Tsai-survey} for a survey on these subjects. 
In what follows we describe results most relevant to us.

Multi-solitons are solutions of \eqref{eq:NLSE} with the asymptotic 
profile 
\EQ{
\label{eq:multi-soliton}
T(t,x) = \sum _{j=1}^N R_j(t,x)
}
as $t \to \infty$, where $N\ge 2$ and each $R_j$ is a soliton to be 
specified in \eqref{eq:expofsol}. The first result of existence of 
multi-solitons was obtained in Zakharov and Shabat \cite{ZaSh72} in 
the case of the $1$-d focusing cubic (i.e. $d=1$, $f(z)=|z|^2z$) 
nonlinear Schr\"odinger equation via the inverse scattering method. 
Indeed, in this particular case the equation is completely 
integrable and one can obtain multi-solitons in a rather explicit 
manner. Kamvissis \cite{Ka95} showed that it is possible to push 
the inverse scattering analysis forward and obtain the existence of 
an \emph{infinite soliton train}, i.e. a solution $u$ of 
\eqref{eq:NLSE} defined as in \eqref{eq:multi-soliton} but with 
$N=+\infty$. In fact, it is shown that, under some technical 
hypotheses, any solution to \eqref{eq:NLSE} with initial data in the 
Schwartz class will eventually decompose at large time as an infinite 
soliton train and a ``background radiation component''. There are 
also results for multi-dark solitons for the companion integrable 
Gross-Pitaevskii case, i.e. $d=1$ and $f(z)=(1-|z|^2)z$, but no known 
results for infinite trains.

In a  non-integrable setting,
the first existence result of multi-solitons was obtained by Merle in 
\cite{Me90} as a by-product of the proof of existence of multiple 
blow-up points solutions for $L^2$-critical \eqref{eq:NLSE}, i.e. 
$f(z) = |z|^{4/d} z$. The techniques initiated in  \cite{Me90} were 
then developed in \cite{CoLC11,CoMaMe11,MaMe06,MaMeTs06} for other 
nonlinearities. 
The idea, so called the energy method, is to choose an increasing 
sequence of time $(t_n)$ with $t_n \to+\infty$ and consider the 
solutions $(u_n)$ to \eqref{eq:NLSE} which solve the equation 
backward in time with final data $u_n(t_n) = T(t_n)$. The sequence 
$(u_n)$ is an approximate sequence for a multi-soliton. To show its
convergence, two arguments are at play. First, one shows that there 
exists a time $t_0$ independent of $n$ such that $u_n$ satisfies on 
$[t_0, t_n]$ the uniform estimates
\[
\|(u_n-T)(t)\|_{H^1}\le e^{-\mu \sqrt {\omega_*} v_* t}.
\]
Second, we have compactness of the sequence of initial data $u_n(t_0)$, i.e. there exists $u_0$ so that 
$u_{n}(t_0) \to u_0$ in $H^s$ for all $0<s<1$. 
See also \cite{Pe04,RoScSo03,MaMeTs06} for stability results 
under restrictive hypotheses.

The energy method is very flexible and can be adapted to other situations. However, its implementation is far from being trivial when the number of solitons is infinite or when one soliton is replaced by a kink. In Le Coz, Li and Tsai \cite{LeCoz_Li_Tsai,LeCoz_Tsai}, an approach based on fixed point argument has been used to construct such structures. In this approach, the large relative speed has been used to get smallness of the Duhamel term due to short interaction time. It is however delicate when the gradient of the error term is also measured. We will explain this approach in more details below, as we will use it to construct mixed dimensional infinite soliton trains.

We now make two assumptions on the nonlinearity $f$, 
which will be assumed throughout the paper.

\begin{assu}{(F)}\label{assuF}
$f(z)=g(|z|^{2})z$, where 
$g\in C([0,\infty),\mb R)\cap C^{2}((0,\infty),\mb R)$
satisfies $g(0)=0$, and
\begin{equation}
|sg'(s)|+|s^{2}g''(s)|\le C_{0}(s^{\apa_{1}/2}+s^{\apa_{2}/2})\quad(s>0)\label{eq:C0}
\end{equation}
for some $C_{0}>0$, and some $\apa_{1},\apa_{2}$ satisfying
\[
0<\apa_{1}\le\apa_{2}<\apa_{\max}=\left\{ \begin{array}{cl}
\infty & \mbox{if }d=1,2\\
\fc{4}{d-2} & \mbox{if }d\ge3.
\end{array}\right.
\]
\end{assu}

If a nontrivial $\vi\in H^{1}(\mb R^{d},\mb R)$ (bound state) and
an $\oa>0$ (frequency) satisfy 
\begin{equation}
-\Da\vi+\oa\vi=f(\vi),\label{eq:seq}
\end{equation}
then for any $v\in\mb R^{d}$ (velocity), $x^{0}\in\mb R^{d}$ (initial
position), and $\ga\in\mb R$ (phase), 
\begin{equation}
R_{\vi,\oa,v,x^{0},\ga}(t,x)\coloneqq 
e^{i(\oa t+\fc{1}{2}v\cdot x-\fc{1}{4}|v|^{2}+\ga)}\vi(x-x^{0}-vt)\label{eq:expofsol}
\end{equation}
is a solution of \eqref{eq:NLSE}, called \emph{soliton} in this paper,
 in its broader meaning of \emph{solitary wave}.
The existence of solitons is a property of the nonlinearity $f$.
To construct infinite soliton trains, we assume that there is a one parameter family of arbitrarily ``small'' solitons:

\begin{assu}{(T)$_d$}\label{assuT}
For given dimension $d$, there are $\oa_{*}>0$, $0<a<1$, 
and $D>0$ (each depends only on $d,f$) such that for 
$0<\oa<\oa_{*}$, there exist nontrivial solutions 
$\vi=\vi_\omega\in H^{1}(\mb R^{d},\mb R)$ of 
\eqref{eq:seq} satisfying
\begin{equation}\label{eq:deofphi}
|\vi(x)|+\oa^{-\fc{1}{2}}|\na\vi(x)|
\le D\oa^{\fc{1}{\apa_{1}}}e^{-a\oa^{1/2}|x|},
\quad\forall x\in\mb R^{d}.
\end{equation}
\end{assu}

Assumption \ref{assuT} is true for a large set of nonlinearities. 
A typical example is 
\EQ{
g(s)=s^{\apa_{1}/2}+c s^{\apa_{2}/2},
}
where $c\in\mb R$, and $0<\apa_{1}<\apa_{2}<\apa_{\max}$;
see \cite[Proposition 2.1]{LeCoz_Tsai} for more general nonlinearities. We shall however take it as an assumption.

In the following we discuss our problem and approach in more details. 

\subsection{General idea}

Suppose $\{W_{j}(t,x)\}$ is a (finite or infinite) collection of
solutions of \eqref{eq:NLSE}. Intuitively, if these solutions are
sufficiently separated from each other, then the nonlinear effects
of their interactions should be negligible, and $\sum_{j}W_{j}$ should
be close to a solution. We are interested in the possibility that
$\sum_{j}W_{j}+\eta$ is a solution for some error $\eta=\eta(t,x)$
tending to zero as $t\to\infty$. The equation of $\eta$ is hence
\[
\left\{ \begin{aligned} & 
\textstyle i\pl_{t}\eta+\Da\eta+f(\sum_{j}W_{j}+\eta)-\sum_{j}f(W_{j})=0\\
 & \eta|_{t=\infty}=0\quad(\mbox{formally}).
\end{aligned}
\right.
\]
By Duhamel's principle, it suffices to solve the fixed point 
problem 
\begin{equation}
\eta(t)=\Phi\eta(t)
\coloneqq-i\int_{t}^{\infty}e^{i(t-\tau)\Da}[G(\tau)+H(\tau)]\, d\tau,
\label{eq:Duhamel}
\end{equation}
where 
\begin{align*}
G &\textstyle =f(\sum_{j}W_{j}+\eta)-f(\sum_{j}W_{j}),\\
H &\textstyle =f(\sum_{j}W_{j})-\sum_{j}f(W_{j}).
\end{align*}
For a specific profile $\sum_{j}W_{j}$ in our construction, 
we will try to prove that $\Phi$ is a contraction mapping on 
a closed ball of some Banach space (see \eqref{eq:norm1} and \eqref{eq:norm2} below for examples), with the needed 
inequalities derived from the standard dispersive 
estimates and the Strichartz estimates. In doing so, the 
above decomposition of the source term into $G$ and $H$ will 
be convenient. Apparently, the control of $G$ will come from 
our assumed control on $\eta$. On the other hand, the 
control of $H$ is much more elaborate. It will rely on our 
assumption that different $W_j$'s are sufficiently separated 
from each other. See the next section.

\subsection{Infinite soliton trains \label{sub:IST}}

By an \emph{infinite soliton train} we mean a solution of 
\eqref{eq:NLSE} whose asymptotic profile is of the form 
$T=\sum_{j\in\mb N}R_{j}$, where  
$R_{j}=R_{\vi_{j},\oa_{j},v_{j},x_{j}^{0},\ga_{j}}(t,x)$ 
are solitons as given by \eqref{eq:expofsol}.
We remark that the term
``train'' is only used in a suggestive sense. It well describes
the one dimensional situation where all solitons travel in the same
direction. In higher dimensions, the traveling
directions of the constituting solitons can be rather arbitrary. 

Consider $\{W_{j}\}$ in \eqref{eq:Duhamel} to be such 
$\{R_{j}\}_{j\in\mb N}$, 
with $x^0_j=0$ for all $j$ 
(see Remark \ref{rem:ini} below).
To give an idea of the kind of things to be proved, 
we cite two results (rephrased).
\begin{itemize}
\item \cite[Theorem 1.2]{LeCoz_Tsai} For $\la>0$,
let $X=X_{\la}$ be the Banach space of all 
$\eta:[0,\infty)\times\mb R^{d}\to\mb C$
satisfying 
\begin{equation}
\|\eta\|_{X}\coloneqq\sup_{t\ge0}e^{\la t}
(\|\eta(t)\|_{L_{x}^{2+\apa_{2}}}+\|\eta\|_{S(t)})<\infty.\label{eq:norm1}
\end{equation}
Suppose $\fc{\apa_{2}}{2+\apa_{2}}\le\apa_{1}$. Then,
for $\la$ large enough and under suitable conditions of 
$\{\oa_j\}$ and $\{v_j\}$, $\Phi$ is a contraction
mapping on the closed unit ball of $X_{\la}$.
\item \cite[Theorem 1.6]{LeCoz_Tsai} Fix any $t_0>0$. 
For $\la,c>0$, let $X=X_{\la,c}$ be the Banach space of all 
$\eta:[t_0,\infty)\times\mb R^{d}\to\mb C$
satisfying 
\begin{equation}
\|\eta\|_{X}\coloneqq\sup_{t\ge t_0}(e^{\la t}\|\eta\|_{S(t)}
+e^{c\la t}\|\na\eta\|_{S(t)})<\infty.\label{eq:norm2}
\end{equation}
Suppose $0<\apa_{1}<\fc{4}{d+2}$. Then, for suitable $c\le 1$
and large enough $\la$,
and under suitable conditions
of $\{\oa_j\}$ and $\{v_j\}$, $\Phi$ is a contraction mapping on 
the closed unit ball of $X_{\la,c}$.
\end{itemize}
The $S(t)$ in the statements represents the Strichartz space on the
time interval $[t,\infty)$ (relevant preliminaries will be given).
What the ``suitable conditions'' are will be clear in due course.
Roughly speaking, they concern the speeds of the following two limits:
i) $\oa_{j}\to 0$, so that the Lebesgue norms of $T$ (or also of $\na T$) 
can be controlled by \eqref{eq:deofphi};
ii) $|v_{j}-v_{k}|\to\infty$ (as $j,k\to\infty$, $j\ne k$), 
so that different solitons are sufficiently separated to each other.

\begin{rem}\label{rem:ini}
We assume the initial positions of all the solitons 
to be the origin for simplicity. This is an apparent 
reason why some constructions
(such as the second result cited above)
have to be done on time intervals $[t_0,\infty)$ 
with positive $t_0$. The same situation will occur in
some of our results for mixed dimensional trains.
\end{rem}

\subsection{Mixed dimensional trains \label{sub:mixed}}

Now we consider asymptotic profiles consisting of solitons of 
multiple dimensions. The simplest example is 
\begin{equation}
T_{1}+T_{2}\coloneqq\sum_{k\in\mb N}R_{1;k}(t,x_{1})+\sum_{j\in\mb N}R_{2;j}(t,x_{1},x_{2}),\label{eq:1-2Train}
\end{equation}
where $R_{1;k}$ and $R_{2;j}$ are solitons in $\mb R_{x}^{1}$ and
$\mb R_{x}^{2}$ respectively. For convenience, we'll call them 1D
solitons and 2D solitons. \eqref{eq:1-2Train} can be visualized as
the profile of a line-point soliton train in $\mb R_{x}^{2}$ (and
a plane-line soliton train in $\mb R_{x}^{3}$, a space-plane soliton
train in $\mb R_{x}^{4}$, and so on). Similarly, we can consider
a combination of $e$D solitons and $d$D solitons for $1\le e<d$,
or even combinations involving three or more dimensions. (It turns
out that there are limited realizable combinations.
See the section ``Main results'' below.) Solutions having
such kind of profiles will be called \emph{mixed (dimensional) trains}.
In the following we take \eqref{eq:1-2Train} as an example to describe
the particular difficulties in constructing mixed trains.

First, our general idea encounters a problem if we only use a 2D error
$\eta(t,x_{1},x_{2})$. To see this, note that by posing a solution
of the form $T_{1}+T_{2}+\eta$, we get 
\[
H=f(T_{1}+T_{2})-\sum_{k}f(R_{1;k})-\sum_{j}f(R_{2;j}).
\]
Then, with $x_{1}$ being fixed, we have 
\[
\lim_{x_{2}\to\infty}H=f(T_{1}(t,x_{1}))-\sum_{k}f(R_{1;k}(t,x_{1})),
\]
which is nonzero in general. That is $H$ has no space decay at infinity,
and hence defies any suitable estimate (we will need $L^p_x$ controls 
of $H$ for $p\le 2$). To resolve this problem, we
will also introduce a lower dimensional error. Precisely, we will
construct a solution of the form 
\begin{align*}
T_{1}+\eta_{1}+T_{2}+\eta,
\end{align*}
where
$\eta_{1}=\eta_{1}(t,x_{1})$ is such that $T_{1}+\eta_{1}$ is an
1D train (i.e. a solution of \eqref{eq:NLSE}). In this way, by regarding
$\{W_{j}\}$ as the sequence defined by $W_{1}=T_{1}+\eta_{1}$, and
$W_{j+1}=R_{2;j}$ for $j\in\mb N$, we have 
\[
H=f(T_{1}+\eta_{1}+T_{2})-f(T_{1}+\eta_{1})-\sum_{j}f(R_{2;j}),
\]
which we will be able to estimate suitably.

The main difficulty in the construction is that the 1D objects $R_{1;k}$
and $\eta_{1}$ only allow $L^{\infty}$ bounds in $x_{2}$. There
are two aspects of the effect of this restriction.

1) To estimate products involving 1D objects and $\eta$ (such as
$\||\eta||\eta_{1}|^{\apa_{i}}\|_{L^p_x}$), we must have 
$L_{x_{1}}^{\infty}$
estimates of the 1D objects, to avoid the need of dealing with ``anisotropic''
estimates of $\eta$. Here by an \emph{anisotropic estimate} we mean an 
$L_{x_{1}}^{p_{1}}L_{x_{2}}^{p_{2}}$
estimate with $p_{1}\ne p_{2}$. Whether such estimates are available
for $\eta$ is unclear (see Appendix \ref{appII}). Now, for $R_{1;k}$, the
$L_{x_{1}}^{\infty}$ estimate is easy to obtain from \eqref{eq:deofphi}.
However, there is no ready result asserting an $L_{x_{1}}^{\infty}$ control 
of $\eta_{1}$. In the previous works \cite{LeCoz_Li_Tsai,LeCoz_Tsai},
the authors did not concern the possibility of constructing (single
dimensional) trains with $L_{x}^{\infty}$ control of the errors.
(Nevertheless, \eqref{eq:norm2} does imply such controls by Sobolev
embedding. We'll discuss it in Section \ref{sub:PLP}.) As a 
consequence, we will investigate this problem before going into the 
mixed cases. 

2) On the other hand, anisotropic estimates for $R_{2;j}$ 
(and $\na R_{2;j}$) are easy to obtain 
(also by \eqref{eq:deofphi}). In estimating products
of them with the 1D objects, we will exploit such estimates.
As will be seen, using anisotropic estimates does give 
us much better results.

\subsection{Main results}

We summarize our main results in the following. 

\begin{enumerate}
\item 
From Theorem \ref{thm:single1} and Theorem \ref{thm:single2} (see
also Corollary \ref{cor:single-sum}), there exist single dimensional
trains $T+\eta$ such that
 \begin{itemize}
 \item $\|\eta(t)\|_{L_{x}^{2}\cap L_{x}^{\infty}}$ has exponential decay
 in $t$, provided
  \begin{itemize}
  \item $d=1$, with $0<\apa_{1}\le\apa_{2}<\apa_{\max}$;
  \item $d=2,3$, with $2(\fc{1}{2}-\fc{1}{d})<\apa_{1}<2$ 
  and $\apa_{1}\le\apa_{2}<\apa_{\max}$.
  \end{itemize} 
 \item $\|\eta(t)\|_{H_{x}^{1}\cap W_{x}^{1,\infty}}$ has exponential decay
 in $t$, provided
  \begin{itemize}
  \item $d=1$, with $1\le\apa_{1}<2$ and $\apa_{1}\le\apa_{2}<\apa_{\max}$.
  \end{itemize}
 \end{itemize}
\item
With the above existence results of $e$D trains $T_{e}+\eta_{e}$
($e$ corresponds to the above $d$), Theorem \ref{thm:mixed0} and
Theorem \ref{thm:mixed1} assert the existence of $e$D-$d$D trains
$T_{e}+\eta_{e}+T_{d}+\eta$ such that
 \begin{itemize}
 \item $\|\eta\|_{S(t)}$ has exponential decay in $t$, provided
  \begin{itemize}
  \item $1\le e\le 3$, $e<d\le e+3$, with 
  $\max(2(\fc{1}{2}-\fc{1}{e}),0)<\apa_1\le \apa_2 \le 4/d$.
  \end{itemize}
 \item $\|\eta\|_{S(t)}+\|\na\eta\|_{S(t)}$ has exponential decay in $t$,
 provided
  \begin{itemize}
  \item $e=1$, $d=2$, with $1\le\apa_{1}<4/3$ and 
  $\apa_{1}\le\apa_{2}<\infty$.
  \end{itemize}
 \end{itemize}
\item
With the last result, Theorem \ref{thm:1D2D3D} asserts the
existence of 1D-2D-3D trains 
$T_{1}+\eta_{1}+T_{2}+\eta_{2}+T_{3}+\eta$
such that 
 \begin{itemize}
 \item $\|\eta\|_{S(t)}$ has exponential decay in $t$, provided 
 $1\le\apa_{1}<4/3$ and $\apa_{1}\le\apa_{2}\le 4/3$.
 \end{itemize}
\end{enumerate}

We give some remarks on other possible constructions, not 
treated in this paper.

\begin{rem}
We focus on infinite soliton trains in this paper. Our method
can apparently be used to construct trains with finitely many
solitons. In that case, there is no need of any assumptions 
on (the finite sequences) $\{\oa_j\}$ and $\{v_j\}$, as long 
as \eqref{eq:deofphi} is valid for all the solitons.
\end{rem}

\begin{rem}
We may add a half kink $K(t,x_1)$  (if it exists) to one 
side of $T_1$ as in \cite{LeCoz_Li_Tsai, LeCoz_Tsai},
if all solitons (1D and higher dimensional) 
are positioned in the other side.
If $T_1$ is finite, 
we may add half kinks to both sides. (See 
\cite[Figure 1]{LeCoz_Li_Tsai} for an illustration.)
Notice that, in this case, it is still possible that there are
infinitely many higher dimensional solitons. 
For example, consider the infinite 2D train profile
$T_2$, with $v_j=(v_{j1},v_{j2})$ being the velocities 
of the solitons. To combine it with $T_1$ having kinks on both
sides, we can arrange $v_{j1}$ to 
make $T_2$ well separated from $T_1$, 
and take $|v_{j2}- v_{\ell 2}|\to \infty$ as 
$j,\ell\to \infty$ ($j\ne\ell$) to make the 2D solitons to be
separated from each other.
\end{rem}

The rest of the paper is organized as follows: 
In Section \ref{sec:BI},
we collect some basic inequalities of the nonlinearity $f$. 
In Section \ref{sec:single}, we construct single dimensional 
trains with spatial supremum control on the errors.
Along the way, we give some detailed discussions
as to the control of trains, which are also fundamental for mixed
dimensional cases. 
We begin Section \ref{sec:MixedT} by showing the importance
of using the Strichartz estimates for constructing mixed trains. 
Section \ref{sub:STR} gives the necessary preliminaries related 
to the Strichartz space. The $e$D-$d$D trains are considered in 
Section \ref{sub:eDdDtrain}, and finally the 1D-2D-3D trains are 
constructed in Section \ref{sub:PLP}.

\section{Basic inequalities \label{sec:BI}}

In this section, we collect some inequalities that are simple 
consequences of Assumption \ref{assuF}. The only thing 
that can be said new is Proposition \ref{prop:decomp} 
(and Corollary \ref{cor:decomp}), of which the flexibility in 
choosing the powers will be useful in some places. We first 
make the following 

\begin{conv}

In this paper, a constant is called \emph{universal} if it depends
only on the dimension $d$ and the nonlinearity $f$, in particular
$C_{0},\apa_{1},\apa_{2}$ in Assumption \ref{assuF} and 
$\oa_{*},a,D$ in Assumption \hyperref[assuT]{(T)$_d$}. We will use the 
notation $\lesssim$
in the sense that the inequality is up to a 
\emph{universal multiplicative constant}. 
The dependence on other parameters will be given explicitly,
possibly as a subscript of $\lesssim$.

\end{conv}

Let $f_{z}\coloneqq\fc{1}{2}(\fc{\pl f}{\pl x}-i\fc{\pl f}{\pl y})$
and $f_{\bar{z}}\coloneqq\fc{1}{2}(\fc{\pl f}{\pl x}+i\fc{\pl f}{\pl y})$,
where $f$ is regarded as a function of $(x,y)\in\mb R^{2}$ by letting 
$f(x,y)\coloneqq f(x+iy)$. 
\begin{prop}
For $w_{1},w_{2}\in\mb C$, we have 
\begin{equation}\label{eq:fineq0}
|f(w_{1}+w_{2})-f(w_{1})|\lesssim
\sum_{i=1,2}(|w_{2}||w_{1}|^{\apa_{i}}+|w_{2}|^{\apa_{i}+1}),
\end{equation}
and
\begin{equation}
\begin{aligned} & |f_{z}(w_{1}+w_{2})-f_{z}(w_{1})|+|f_{\bar{z}}(w_{1}+w_{2})-f_{\bar{z}}(w_{1})|\\
 & \qquad\qquad\qquad\lesssim\sum_{i=1,2}|w_{2}|^{\min(\apa_{i},1)}(|w_{1}|+|w_{2}|)^{\max(\apa_{i}-1,0)}.
\end{aligned}
\label{eq:fineq1}
\end{equation}

\end{prop}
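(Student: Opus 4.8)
The plan is to prove both inequalities by reducing them to statements about the real-variable functions arising from Assumption \ref{assuF}, and then applying the mean value theorem together with the growth bounds \eqref{eq:C0}. Write $f(z)=g(|z|^2)z$; the derivatives $f_z$ and $f_{\bar z}$ can be computed explicitly as $f_z = g(|z|^2)+g'(|z|^2)|z|^2$ and $f_{\bar z}=g'(|z|^2)z^2$, so all three quantities to be estimated are built out of $g$, $g'$, and $g''$ evaluated at $|z|^2$, multiplied by bounded-modulus factors. The key analytic input is that \eqref{eq:C0} controls $sg'(s)$ and $s^2g''(s)$, and by integrating from $0$ (using $g(0)=0$) also controls $g(s)$ itself: indeed $|g(s)|=|\int_0^s g'(\tau)\,d\tau|\le \int_0^s C_0(\tau^{\apa_1/2-1}+\tau^{\apa_2/2-1})\,d\tau \lesssim s^{\apa_1/2}+s^{\apa_2/2}$. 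So effectively every factor $g,sg',s^2g''$ is $O(s^{\apa_1/2}+s^{\apa_2/2})$, which in terms of a complex variable $w$ means $O(|w|^{\apa_1}+|w|^{\apa_2})$.

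For \eqref{eq:fineq0}, the plan is to write $f(w_1+w_2)-f(w_1) = \int_0^1 \frac{d}{dt}f(w_1+tw_2)\,dt = \int_0^1\big(f_z(w_1+tw_2)w_2 + f_{\bar z}(w_1+tw_2)\overline{w_2}\big)\,dt$, so that $|f(w_1+w_2)-f(w_1)|\lesssim |w_2|\sup_{t\in[0,1]}\big(|f_z|+|f_{\bar z}|\big)(w_1+tw_2)$. By the preceding paragraph, $|f_z(w)|+|f_{\bar z}(w)|\lesssim |w|^{\apa_1}+|w|^{\apa_2}$, and on the segment $|w_1+tw_2|\le |w_1|+|w_2|$, so $(|w_1|+|w_2|)^{\apa_i}\lesssim |w_1|^{\apa_i}+|w_2|^{\apa_i}$; multiplying by $|w_2|$ gives exactly the right-hand side of \eqref{eq:fineq0}. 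One minor care point: $g$ is only $C^2$ on $(0,\infty)$, so the segment might pass through $w=0$; this is harmless because $f$ extends continuously (indeed $f(z)/z\to 0$) and the bound is an inequality of nonnegative quantities that holds in the limit, but I would phrase the argument so as to avoid differentiating at the origin — e.g. approximate, or note that the integrand is bounded by an integrable function uniformly.

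For \eqref{eq:fineq1}, the same telescoping via the fundamental theorem of calculus applies to $f_z$ and $f_{\bar z}$, whose (complex) gradients involve $g'$, $g''$, i.e. factors of size $sg'(s)/s$ and $s^2g''(s)/s^2$ times bounded factors, hence $\na f_z, \na f_{\bar z} = O(|w|^{\apa_1-1}+|w|^{\apa_2-1})$ away from the origin. When $\apa_i\ge 1$ this is a genuine pointwise bound and integrating over the segment gives $|f_z(w_1+w_2)-f_z(w_1)|\lesssim |w_2|(|w_1|+|w_2|)^{\apa_i-1}$, matching the claim with $\min(\apa_i,1)=1$. When $\apa_i<1$ the derivative $|w|^{\apa_i-1}$ is not integrable near $w=0$, so the naive segment integral fails; here I would instead use Hölder continuity of order $\apa_i$ directly: an elementary estimate $\big||w|^{\apa_i-1}w_1^2 - |w'|^{\apa_i-1}w'^2\big| \lesssim |w_1-w'|^{\apa_i}$ (proved by splitting into the cases $|w_1-w'|\gtrsim |w_1|$ and $|w_1-w'|\lesssim|w_1|$, the latter by the segment integral which is now legitimate). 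This case distinction, and carefully combining the $\apa_1$ and $\apa_2$ contributions with the max/min exponents as stated, is the fussiest part. I expect the main obstacle to be exactly this low-power regime $\apa_i<1$ in \eqref{eq:fineq1} — handling the non-Lipschitz, merely Hölder, behavior of $z\mapsto |z|^{\apa_i-1}z^2$ near the origin — while everything else is a routine application of the mean value theorem and the structural bound \eqref{eq:C0}.
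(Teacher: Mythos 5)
Your proposal is correct; note that the paper does not actually prove this proposition but defers to \cite[Lemma 2.2]{LeCoz_Li_Tsai}, and your argument (explicit formulas for $f_z,f_{\bar z}$, the bound $|g(s)|\lesssim s^{\apa_1/2}+s^{\apa_2/2}$ obtained by integrating \eqref{eq:C0} from $g(0)=0$, the fundamental theorem of calculus along the segment, and the case split $|w_2|\lesssim|w_1|$ versus $|w_2|\gtrsim|w_1|$ to get H\"older-$\apa_i$ control when $\apa_i<1$) is precisely the standard route taken there. The only blemish is a typo in your model H\"older estimate: the degree-$\apa_i$ homogeneous object should read $|w|^{\apa_i-2}w^2$ (coming from $f_{\bar z}=g'(|w|^2)w^2$ with $|g'(s)|\lesssim s^{\apa_i/2-1}$), not $|w|^{\apa_i-1}w^2$; the surrounding argument makes the intent clear and is otherwise sound.
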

See \cite[Lemma 2.2]{LeCoz_Li_Tsai} for the proofs of both inequalities.
Notice that since $f(0)=0$, \eqref{eq:fineq0} subsumes $|f(w)|\lesssim\sum_{i=1,2}|w|^{\apa_{i}+1}$
for $w\in\mb C$. It's easy to check that we also have $f_{z}(0)=f_{\bar{z}}(0)=0$
from Assumption \ref{assuF}, and \eqref{eq:fineq1} subsumes
$|f_{z}(w)|+|f_{\bar{z}}(w)|\lesssim\sum_{i=1,2}|w|^{\apa_{i}}$ for
$w\in\mb C$.

For $w:\mb R^{d}\to\mb C$ such that the chain rule applies to $\na f(w(x))$
(e.g. $w\in W_{loc}^{1,1}$), it's easy to check that 
\begin{equation}
\na(f(w(x)))=f_{z}(w(x))\na w(x)+f_{\bar{z}}(w(x))\ol{\na w(x)}.\label{eq:fchain}
\end{equation}
We have the following corollary.
\begin{prop}
For $w_{1},w_{2}\in W_{loc}^{1,1}(\mb R^{d},\mb C)$, we have
\begin{align}
 & |\na[f(w_{1}+w_{2})-f(w_{1})]|\nonumber \\
 & \quad\lesssim\sum_{i=1,2}\left\{ |w_{2}|^{\min(\apa_{i},1)}(|w_{1}|+|w_{2}|)^{\max(\apa_{i}-1,0)}|\na w_{1}|+(|w_{1}|+|w_{2}|)^{\apa_{i}}|\na w_{2}|\right\} ,\label{eq:fineq1'}
\end{align}
and 
\begin{align}
 & |\na[f(w_{1}+w_{2})-f(w_{1})-f(w_{2})]|\nonumber \\
 & \quad\lesssim\sum_{i=1,2}(|w_{1}|+|w_{2}|)^{\max(\apa_{i}-1,0)}(|w_{2}|^{\min(\apa_{i},1)}|\na w_{1}|+|w_{1}|^{\min(\apa_{i},1)}|\na w_{2}|).\label{eq:fineq1''}
\end{align}
 \end{prop}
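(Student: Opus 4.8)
The plan is to reduce both inequalities to the already-established pointwise bounds \eqref{eq:fineq0} and \eqref{eq:fineq1} via the chain rule \eqref{eq:fchain}, so that no new analysis of $f$ is needed; the entire content is bookkeeping on product terms. First I would write, using \eqref{eq:fchain} applied to $w_1+w_2$ and to $w_1$ separately,
\[
\na[f(w_1+w_2)-f(w_1)] = [f_z(w_1+w_2)-f_z(w_1)]\na w_1 + [f_{\bar z}(w_1+w_2)-f_{\bar z}(w_1)]\ol{\na w_1} + f_z(w_1+w_2)\na w_2 + f_{\bar z}(w_1+w_2)\ol{\na w_2}.
\]
For the first two terms I would apply \eqref{eq:fineq1} to bound the differences of $f_z,f_{\bar z}$, producing exactly the factor $\sum_i |w_2|^{\min(\apa_i,1)}(|w_1|+|w_2|)^{\max(\apa_i-1,0)}|\na w_1|$. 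For the last two terms I would use the subsumed bound $|f_z(w)|+|f_{\bar z}(w)|\lesssim \sum_i |w|^{\apa_i}$ noted just after \eqref{eq:fineq1}, evaluated at $w=w_1+w_2$ and then estimated by $\sum_i(|w_1|+|w_2|)^{\apa_i}|\na w_2|$ after using $|w_1+w_2|\le |w_1|+|w_2|$ and $|\ol{\na w_2}|=|\na w_2|$. Collecting these gives \eqref{eq:fineq1'}.

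For \eqref{eq:fineq1''} the idea is the same but now we must exploit a cancellation coming from subtracting $f(w_2)$ as well. Applying \eqref{eq:fchain} to each of $f(w_1+w_2)$, $f(w_1)$, $f(w_2)$ and regrouping the four resulting terms according to whether they carry $\na w_1$ or $\na w_2$, one gets (modulo conjugates, which are irrelevant for absolute values)
\[
\na[f(w_1+w_2)-f(w_1)-f(w_2)] = [f_z(w_1+w_2)-f_z(w_1)]\na w_1(\cdots) + [f_z(w_1+w_2)-f_z(w_2)]\na w_2(\cdots),
\]
where $(\cdots)$ abbreviates the obvious $f_{\bar z}$ companion terms. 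The point is that the coefficient multiplying $\na w_1$ is $f_z(w_1+w_2)-f_z(w_1)$ (and its $\bar z$ analogue), which by \eqref{eq:fineq1} is $\lesssim \sum_i |w_2|^{\min(\apa_i,1)}(|w_1|+|w_2|)^{\max(\apa_i-1,0)}$ — this is why a factor $|w_2|^{\min(\apa_i,1)}$ appears in front of $|\na w_1|$ rather than just $(|w_1|+|w_2|)^{\apa_i}$. By symmetry the coefficient of $\na w_2$ is $f_z(w_1+w_2)-f_z(w_2)$, bounded via \eqref{eq:fineq1} (with the roles of $w_1,w_2$ exchanged) by $\lesssim \sum_i |w_1|^{\min(\apa_i,1)}(|w_1|+|w_2|)^{\max(\apa_i-1,0)}$. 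Summing the two contributions and factoring out the common $(|w_1|+|w_2|)^{\max(\apa_i-1,0)}$ yields \eqref{eq:fineq1''}.

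The only genuinely delicate point — and the step I would be most careful about — is the regrouping for \eqref{eq:fineq1''}: one must verify that after expanding all three chain-rule expressions, the terms organize cleanly so that every surviving piece is either a \emph{difference} $f_z(w_1+w_2)-f_z(w_1)$ times $\na w_1$ or $f_z(w_1+w_2)-f_z(w_2)$ times $\na w_2$ (and the $f_{\bar z}$ analogues), with no leftover term of the form $f_z(w_j)\na w_j$ that would spoil the gain. Since $\na[f(w_1)]=f_z(w_1)\na w_1+f_{\bar z}(w_1)\ol{\na w_1}$ and similarly for $w_2$, the $\na w_1$-coefficient in the triple difference is indeed $f_z(w_1+w_2)-f_z(w_1)$ and the $\na w_2$-coefficient is $f_z(w_1+w_2)-f_z(w_2)$, so the cancellation is exact; but this is worth spelling out. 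Everything else — replacing $|w_1+w_2|$ by $|w_1|+|w_2|$ inside monotone powers, discarding conjugation bars, and absorbing constants into $\lesssim$ — is routine. No compactness, no integration, and no further properties of $g$ beyond \eqref{eq:fineq0}–\eqref{eq:fineq1} are required.
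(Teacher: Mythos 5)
Your proposal is correct and follows essentially the same route as the paper: apply the chain rule \eqref{eq:fchain}, split $\na w=\na w_1+\na w_2$ so that the coefficient of $\na w_j$ becomes the difference $f_z(w_1+w_2)-f_z(w_j)$ (plus its $f_{\bar z}$ companion), and then invoke \eqref{eq:fineq1} for the differences and the subsumed bound $|f_z(w)|+|f_{\bar z}(w)|\lesssim\sum_i|w|^{\apa_i}$ for the remaining term in \eqref{eq:fineq1'}. The regrouping you flag as the delicate step is exactly the identity the paper uses, and your verification of it is correct.
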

\begin{proof}
Let $w=w_{1}+w_{2}$. By \eqref{eq:fchain}, 
\begin{align*}
|\na[f(w)-f(w_{1})]| & =|f_{z}(w)\na w+f_{\bar{z}}(w)\ol{\na w}-f_{z}(w_{1})\na w_{1}-f_{\bar{z}}(w_{1})\ol{\na w_{1}}|\\
 & \le(|f_{z}(w)-f_{z}(w_{1})|+|f_{\bar{z}}(w)-f_{\bar{z}}(w_{1})|)|\na w_{1}|\\
 & \qquad\qquad+(|f_{z}(w)|+|f_{\bar{z}}(w)|)|\na w_{2}|,
\end{align*}
and \eqref{eq:fineq1'} follows \eqref{eq:fineq1}. For \eqref{eq:fineq1''},
we have
\begin{align*}
|\na[f(w)-f(w_{1})-f(w_{2})]| & =|f_{z}(w)\na w+f_{\bar{z}}(w)\ol{\na w}\\
 & \qquad\qquad-\sum_{j=1,2}(f_{z}(w_{j})\na w_{j}+f_{\bar{z}}(w_{j})\ol{\na w_{j}})|\\
 & \le\sum_{j=1,2}(|f_{z}(w)-f_{z}(w_{j})|+|f_{\bar{z}}(w)-f_{\bar{z}}(w_{j})|)|\na w_{j}|.
\end{align*}
Let $1'=2$ and $2'=1$. Then \eqref{eq:fineq1} implies
\begin{align*}
 & |\na[f(w)-f(w_{1})-f(w_{2})]|\\
 & \quad\lesssim\sum_{j=1,2}\Bigl\{\sum_{i=1,2}|w_{j'}|^{\min(\apa_{i},1)}(|w_{1}|+|w_{2}|)^{\max(\apa_{i}-1,0)}\Bigr\}|\na w_{j}|\\
 & \quad=\sum_{i=1,2}(|w_{1}|+|w_{2}|)^{\max(\apa_{i}-1,0)}(|w_{2}|^{\min(\apa_{i},1)}|\na w_{1}|+|w_{1}|^{\min(\apa_{i},1)}|\na w_{2}|).
 \qedhere
\end{align*}
\end{proof}

\begin{prop}\label{prop:decomp} 
For any $\ta_{ij},\phi_{ij}\in[0,1]$ ($i=1,2$, $j\in\mb N$), and 
for any absolutely convergent series $\sum_{j\in\mb N}w_{j}$ of complex
numbers, we have
\[
|f(\sum_{j}w_{j})-\sum_{j}f(w_{j})|\lesssim\sum_{i=1,2}\sum_{j}\Bigl(
|w_{j}|^{\apa_{i}+\ta_{ij}}(\sum_{\ell\ne j}|w_{\ell}|)^{1-\ta_{ij}}
+|w_{j}|^{1-\phi_{ij}}(\sum_{\ell\ne j}|w_{\ell}|)^{\apa_{i}+\phi_{ij}}\Bigr).
\]
\end{prop}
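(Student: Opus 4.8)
The plan is to reduce the statement to a two-term estimate and then telescope the series, keeping the multiplicative constant independent of the exponents $\ta_{ij},\phi_{ij}$. The key lemma is: for all $w_{1},w_{2}\in\mb C$ and all $\ta_{i},\phi_{i}\in[0,1]$ ($i=1,2$),
\[
|f(w_{1}+w_{2})-f(w_{1})-f(w_{2})|
\lesssim\sum_{i=1,2}\bigl(|w_{1}|^{\apa_{i}+\ta_{i}}|w_{2}|^{1-\ta_{i}}
+|w_{1}|^{1-\phi_{i}}|w_{2}|^{\apa_{i}+\phi_{i}}\bigr).
\]
To prove this, apply \eqref{eq:fineq0} (with $w_1,w_2$ interchanged) to $f(w_1+w_2)-f(w_2)$ and bound $|f(w_1)|\lesssim\sum_i|w_1|^{\apa_i+1}$, which \eqref{eq:fineq0} subsumes; this gives $|f(w_1+w_2)-f(w_1)-f(w_2)|\lesssim\sum_i(|w_1||w_2|^{\apa_i}+|w_1|^{\apa_i+1})$, and by symmetry also $\lesssim\sum_i(|w_2||w_1|^{\apa_i}+|w_2|^{\apa_i+1})$. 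If $|w_1|\le|w_2|$, use the first bound and note $|w_1||w_2|^{\apa_i}\le|w_1|^{1-\phi_i}|w_2|^{\apa_i+\phi_i}$ and $|w_1|^{\apa_i+1}\le|w_1|^{\apa_i+\ta_i}|w_2|^{1-\ta_i}$, since each target term is the left-hand one multiplied by $(|w_2|/|w_1|)^{\phi_i}\ge1$, resp.\ $(|w_2|/|w_1|)^{1-\ta_i}\ge1$; if $|w_1|\ge|w_2|$, use the second bound in the symmetric way (the cases $w_1=0$ or $w_2=0$ being trivial since $f(0)=0$). Thus no genuine interpolation between the two a priori bounds is needed.

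With $S_0=0$, $S_n=\sum_{k=1}^n w_k$ and $S=\sum_{k\in\mb N}w_k$ (which exists as the series is absolutely convergent), the identity $\sum_{k=1}^n(f(S_k)-f(S_{k-1}))=f(S_n)$ (using $f(0)=0$) gives
\[
f(S_n)-\sum_{k=1}^n f(w_k)=\sum_{k=1}^n\bigl(f(S_{k-1}+w_k)-f(S_{k-1})-f(w_k)\bigr).
\]
Applying the lemma to each summand with $w_1=w_k$, $w_2=S_{k-1}$ and exponents $\ta_{ik},\phi_{ik}$, and then replacing $|S_{k-1}|$ by the larger quantity $\sum_{\ell\ne k}|w_\ell|$ inside the nonnegative powers of $|S_{k-1}|$ that occur, bounds $|f(S_n)-\sum_{k=1}^n f(w_k)|$ by the $n$-th partial sum of the claimed right-hand side, hence by the full sum. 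It remains to let $n\to\infty$: since $\sum_j|w_j|<\infty$ we have $S_n\to S$, and $f$ being continuous ($f(z)=g(|z|^2)z$ with $g\in C([0,\infty))$), $f(S_n)\to f(S)$; moreover $\sum_j f(w_j)$ converges absolutely, since $|f(w_j)|\lesssim\sum_i|w_j|^{\apa_i+1}$ and $|w_j|^{\apa_i+1}\le|w_j|$ for the all-but-finitely-many indices $j$ with $|w_j|\le1$. Hence $f(S_n)-\sum_{k=1}^n f(w_k)\to f(\sum_j w_j)-\sum_j f(w_j)$ and the estimate passes to the limit (trivially if its right-hand side is $+\infty$).

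The only nonroutine ingredient is the two-term lemma with arbitrary $\ta_i,\phi_i\in[0,1]$; I expect that to be the one place requiring care. Telescoping and the passage to the limit are standard. The point to get right in the two-term step is to split according to whether $|w_1|\le|w_2|$ or $|w_1|\ge|w_2|$ and to reweight the two elementary bounds coming from \eqref{eq:fineq0} by powers of the ratio $|w_1|/|w_2|$, which is harmless because that ratio (resp.\ its reciprocal) is then $\le1$ while the relevant exponent is $\ge0$; attempting instead to interpolate the two bounds would produce unwanted mixed $\apa_1,\apa_2$ terms.
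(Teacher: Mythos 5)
Your proof is correct, but it runs in the opposite logical direction from the paper's. The paper proves Proposition \ref{prop:decomp} directly for the whole series: it writes $f(w)-\sum_j f(w_j)=\sum_j[h_jf(w)-f(w_j)]$ with the convex weights $h_j=|w_j|/\sum_\ell|w_\ell|$, bounds each bracket via \eqref{eq:fineq0} to extract the common factor $\fc{|w_j|(\sum_{\ell\ne j}|w_\ell|)}{\sum_\ell|w_\ell|}$, and then distributes that factor flexibly using the Young-type inequality $\fc{xy}{x+y}\le x^{\ta}y^{1-\ta}$; the two-term Corollary \ref{cor:decomp} is then deduced as a special case. You instead prove the two-term estimate first (by a case split on whether $|w_1|\le|w_2|$ and reweighting by a ratio raised to a nonnegative power, rather than by Young's inequality) and obtain the series version by telescoping $f(S_n)-\sum_{k\le n}f(w_k)=\sum_{k\le n}(f(S_{k-1}+w_k)-f(S_{k-1})-f(w_k))$ and passing to the limit. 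Both arguments give a universal constant independent of the exponents $\ta_{ij},\phi_{ij}$, and your replacement of $|S_{k-1}|$ by the larger $\sum_{\ell\ne k}|w_\ell|$ is legitimate since only nonnegative powers of it appear. What the paper's route buys is that it never needs the limiting argument (continuity of $f$, absolute convergence of $\sum_j f(w_j)$) and treats all indices $j$ symmetrically in one shot; what your route buys is that it isolates the two-term inequality as the real content and proves it by a purely elementary comparison, with the infinite case following by soft bookkeeping. Your limit passage is handled correctly, so there is no gap.
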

\begin{rem}
It should be clear that we use $\sum_{j}$ to represent $\sum_{j\in\mb N}$,
and $\sum_{\ell\ne j}$ (with $j$ fixed) to represent 
$\sum_{\ell\in\mb N\setminus\{j\}}$.
We'll freely use such simplified notation in this paper.\end{rem}
\begin{proof}
The inequality is trivial if $w_{j}=0$ for all $j$. So assume at
least one $w_{j}\ne0$. Let $h_{j}=|w_{j}|/(\sum_{\ell}|w_{\ell}|)$
for each $j\in\mb N$, and let $w=\sum_{j}w_{j}$. We have
\begin{align*}
|f(w)-\sum_{j}f(w_{j})| & =|\sum_{j}[h_{j}f(w)-f(w_{j})]|\\
 & \le\sum_{j}\Bigl\{ h_{j}|f(w)-f(w_{j})|+(1-h_{j})|f(w_{j})|\Bigr\}.
\end{align*}
By \eqref{eq:fineq0}, 
\begin{align*}
h_{j}|f(w)-f(w_{j})| & \lesssim\fc{|w_{j}|}{\sum_{\ell}|w_{\ell}|}\sum_{i=1,2}\left\{ |w-w_{j}||w_{j}|^{\apa_{i}}+|w-w_{j}|^{\apa_{i}+1}\right\} \\
 & \le\fc{|w_{j}|(\sum_{\ell\ne j}|w_{\ell}|)}{\sum_{\ell}|w_{\ell}|}\sum_{i=1,2}\Bigl\{|w_{j}|^{\apa_{i}}+(\sum_{\ell\ne j}|w_{\ell}|)^{\apa_{i}}\Bigr\}.
\end{align*}
And
\[
(1-h_{j})|f(w_{j})|\lesssim\fc{\sum_{\ell\ne j}|w_{\ell}|}{\sum_{\ell}|w_{\ell}|}\sum_{i=1,2}|w_{j}|^{\apa_{i}+1}=\fc{|w_{j}|(\sum_{\ell\ne j}|w_{\ell}|)}{\sum_{\ell}|w_{\ell}|}\sum_{i=1,2}|w_{j}|^{\apa_{i}}.
\]
Thus 
\[
|f(w)-\sum_{j}f(w_{j})|\lesssim\sum_{i=1,2}\sum_{j}\fc{|w_{j}|(\sum_{\ell\ne j}|w_{\ell}|)}{\sum_{\ell}|w_{\ell}|}\Bigl\{|w_{j}|^{\apa_{i}}+(\sum_{\ell\ne j}|w_{\ell}|)^{\apa_{i}}\Bigr\}.
\]
Now fix any $\ta\in [0,1]$. Notice that by Young's inequality we have
\[
x+y\ge(1-\ta)^{-(1-\ta)}\ta^{-\ta}x^{1-\ta}y^{\ta}\ge x^{1-\ta}y^{\ta},
\quad\forall x,y\ge 0.
\]
Thus
\[
\fc{|w_{j}|(\sum_{\ell\ne j}|w_{\ell}|)}{\sum_{\ell}|w_{\ell}|}=\fc{|w_{j}|(\sum_{\ell\ne j}|w_{\ell}|)}{|w_{j}|+(\sum_{\ell\ne j}|w_{\ell}|)}\le\fc{|w_{j}|(\sum_{\ell\ne j}|w_{\ell}|)}{|w_{j}|^{1-\ta}(\sum_{\ell\ne j}|w_{\ell}|)^{\ta}}=|w_{j}|^{\ta}(\sum_{\ell\ne j}|w_{\ell}|)^{1-\ta}.
\]
This completes the proof.\end{proof}

\begin{cor}
\label{cor:decomp}For any $\ta_{i},\phi_{i}\in[0,1]$ ($i=1,2$), and 
$w_{1},w_{2}\in\mb C$, 
\[
|f(w_{1}+w_{2})-f(w_{1})-f(w_{2})|\lesssim\sum_{i=1,2}\Bigl(
|w_{1}|^{\apa_{i}+\ta_{i}}|w_{2}|^{1-\ta_{i}}
+|w_{1}|^{1-\phi_{i}}|w_{2}|^{\apa_{i}+\phi_{i}}\Bigr).
\]

\end{cor}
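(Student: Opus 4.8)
The plan is to derive Corollary~\ref{cor:decomp} as the two-term specialization of Proposition~\ref{prop:decomp}. First I would apply Proposition~\ref{prop:decomp} to the sequence $(w_j)_{j\in\mb N}$ defined by $w_1,w_2$ as given and $w_j=0$ for all $j\ge 3$; this is trivially an absolutely convergent series, so the proposition applies. Choosing the parameters in the proposition to be $\ta_{1j}=\phi_{1j}$ and $\ta_{2j}=\phi_{2j}$ independent of $j$, matching the $\ta_i,\phi_i$ of the corollary for $j=1,2$ (the values for $j\ge 3$ are irrelevant since those $w_j$ vanish), the conclusion of the proposition becomes a sum over $j$ in which every term with $j\ge 3$ vanishes because it carries a factor $|w_j|$ (note $\apa_i+\ta_{ij}>0$ and $1-\phi_{ij}\ge 0$, but crucially the surviving terms all have at least one positive power of $|w_j|$, so a zero $w_j$ kills them; when $1-\phi_{ij}=0$ the factor $(\sum_{\ell\ne j}|w_\ell|)^{\apa_i+\phi_{ij}}$ still has positive exponent $\apa_i+1>0$ and is paired with the $h_j$-weight in the proof, but the cleanest route is just to invoke the displayed inequality of the proposition directly, where the $j$-th summand is manifestly a sum of monomials each divisible by $|w_j|^{c}$ with $c>0$).

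Concretely, for $j=1$ the inner sum $\sum_{\ell\ne 1}|w_\ell|$ collapses to $|w_2|$, and for $j=2$ it collapses to $|w_1|$. Hence the $j=1$ contribution is $\sum_{i=1,2}\big(|w_1|^{\apa_i+\ta_i}|w_2|^{1-\ta_i}+|w_1|^{1-\phi_i}|w_2|^{\apa_i+\phi_i}\big)$, and the $j=2$ contribution is $\sum_{i=1,2}\big(|w_2|^{\apa_i+\ta_i}|w_1|^{1-\ta_i}+|w_2|^{1-\phi_i}|w_1|^{\apa_i+\phi_i}\big)$. The left-hand side $|f(\sum_j w_j)-\sum_j f(w_j)|$ equals $|f(w_1+w_2)-f(w_1)-f(w_2)|$ since the remaining terms of the series are zero and $f(0)=0$. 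So we already obtain the desired bound, except that the right-hand side is the sum of the $j=1$ and $j=2$ contributions rather than a single copy; that is, we get the asserted right-hand side together with its ``reflected'' version obtained by swapping $w_1\leftrightarrow w_2$ (and correspondingly relabeling the exponent parameters).

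To finish, I would observe that this symmetric redundancy is harmless: both the $j=1$ and $j=2$ contributions are, after renaming the free parameters $\ta_i,\phi_i\in[0,1]$, instances of the single expression on the right-hand side of the corollary, and one can absorb the reflected version into the stated one at the cost of the universal constant (or, alternatively, simply note that the left-hand side is symmetric in $w_1,w_2$ so it is bounded by either half of the sum after re-optimizing the parameters). The only point requiring a line of care is the vanishing of the $j\ge 3$ terms, and I expect that to be the mild ``obstacle'' — it is not really an obstacle, just a bookkeeping check that every monomial appearing in the $j$-th summand of Proposition~\ref{prop:decomp} carries a strictly positive power of $|w_j|$, which is clear from the exponents $\apa_i+\ta_{ij}\ge \apa_1>0$ and, in the second family, from pairing $|w_j|^{1-\phi_{ij}}$ with the overall structure (or by tracing back to the $(1-h_j)|f(w_j)|$ and $h_j|f(w)-f(w_j)|$ terms in the proof of the proposition, both of which vanish when $w_j=0$). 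I would then state ``this follows immediately from Proposition~\ref{prop:decomp}'' and include the two-line reduction above.
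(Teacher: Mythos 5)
Your reduction to Proposition \ref{prop:decomp} with $w_j=0$ for $j\ge 3$ is exactly the paper's strategy, and your handling of the vanishing $j\ge 3$ terms is fine (one just avoids $\phi_{ij}=1$ there, or traces back to the proof). The gap is in your last step. With the $j$-independent choice $\ta_{ij}=\ta_i$, $\phi_{ij}=\phi_i$, the proposition yields the stated right-hand side plus its reflection under $w_1\leftrightarrow w_2$, and your claim that the reflected version ``can be absorbed into the stated one at the cost of the universal constant'' is false. Take $\ta_i=1$, $\phi_i=0$, $w_1=\vn$ small, $w_2=1$: the stated bound is $\vn^{\apa_i+1}+\vn\to 0$, while the reflected $j=2$ contribution contains $|w_2|^{\apa_i+1}|w_1|^{0}=1$, so no universal constant works. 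The appeal to symmetry of the left-hand side does not rescue this either: $L(w_1,w_2)=L(w_2,w_1)$ and $L\le A(w_1,w_2)+A(w_2,w_1)$ do not imply $L\lesssim A(w_1,w_2)$.

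The fix is a different parameter choice in Proposition \ref{prop:decomp}, and it is precisely what the paper does: take $\ta_{i1}=\phi_{i2}=\ta_i$ and $\phi_{i1}=\ta_{i2}=\phi_i$, i.e.\ swap the roles of $\ta$ and $\phi$ for $j=2$. Then the $j=2$ summand becomes $|w_2|^{\apa_i+\phi_i}|w_1|^{1-\phi_i}+|w_2|^{1-\ta_i}|w_1|^{\apa_i+\ta_i}$, which is literally identical to the $j=1$ summand and hence to the asserted right-hand side; the total is twice the desired bound and the factor $2$ is absorbed into $\lesssim$. So your overall route is correct, but the parameters must be chosen $j$-dependently rather than repaired after the fact.
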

\begin{proof} 
The assertion follows by considering $w_j=0$ for $j\ge 3$, and taking
\[
\theta_{i2}  = \phi_{i1}  =\phi_i, \quad
\phi_{i2} = \theta_{i1}   =\theta_i
\]
in Proposition \ref{prop:decomp}.
\end{proof}

\section{Single dimensional trains with $L_{x}^{\infty}$ 
control of errors\label{sec:single} }

In this section we investigate the possibility of constructing single
dimensional trains 
\begin{align}\label{eq:T-def}
T+\eta
\end{align}
such that $\|\eta(t)\|_{L_{x}^{\infty}}$
(or even $\|\eta(t)\|_{W_{x}^{1,\infty}}$) decays exponentially in
$t$. Here
\begin{align*}
 T=\sum_{j\in\mb N}R_{j}, \quad \text{where}\quad
 R_{j}=R_{\phi_{j},\oa_{j},v_{j},x_{j}^{0}=0,\ga_{j}}(t,x)
\end{align*}
are $d$D solitons as given by \eqref{eq:expofsol}, with
$x_{j}^{0}=0$ for all $j$.
Besides the main results (Theorem \ref{thm:single1} and Theorem
\ref{thm:single2}), many discussions in this section are also useful
for next section.

By Assumption 
\ref{assuT},
\begin{equation}
\begin{aligned}|R_{j}(t,x)| & \le D\oa_{j}^{\fc{1}{\apa_{1}}}e^{-a\oa_{j}^{1/2}|x-v_{j}t|},\\
|\na R_{j}(t,x)| & \lesssim D\langle v_{j}\rangle
\oa_{j}^{\fc{1}{\apa_{1}}}e^{-a\oa_{j}^{1/2}|x-v_{j}t|},
\end{aligned}
\label{eq:raw}
\end{equation}
where we used
\[
|v_{j}|/2+\oa_{j}^{1/2}\lesssim 
\langle v_{j}\rangle, \quad \langle v\rangle:=(|v|^2+1)^{1/2}.
\]
By the change of variable $x=\oa^{-1/2}y$, we get for $0<p\le\infty$
\begin{equation}
\begin{aligned}\|R_{j}\|_{L_{x}^{p}} & \le D_{p}\oa_{j}^{\fc 1{\apa_{1}}-\fc d{2p}},\\
\|\na R_{j}\|_{L_{x}^{p}} & \lesssim D_{p}\langle v_{j}\rangle\oa_{j}^{\fc 1{\apa_{1}}-\fc d{2p}},
\end{aligned}
\label{eq:lprgr}
\end{equation}
where $D_{p}=D\|e^{-a|y|}\|_{L_{y}^{p}}$. 
\begin{rem}
The norm $\|\cdot\|_{L_{x}^{p}}$ in \eqref{eq:lprgr} is indeed $\|\cdot\|_{L^{\infty}(\mb R,L^{p}(\mb R^{d}))}$
($\|\cdot\|_{L_{t}^{\infty}L_{x}^{p}}$ for short). We shall however
maintain the sloppy notation for simplicity. The same remark applies
to $\|T\|_{L_{x}^{p}}$ and $\|\na T\|_{L_{x}^{p}}$, which will be
considered soon. Note that as solitons do not change shapes, they
can not have $L_{t}^{s}L_{x}^{p}$ bounds for any $s<\infty$.
\end{rem}

\begin{rem}
\label{rem:p_lb} Using the inequality 
$|y|\ge(|y_{1}|+\cdots+|y_{d}|)/\sqrt{d}$,
we get $D_{p}\le D(\fc{2\sqrt{d}}{ap})^{d/p}$. 
Thus, for fixed $p_0>0$,
$p\ge p_0$ implies $D_p\lesssim_{p_0} 1$. 
In particular, $D_p\lesssim 1$ if $p_0$ is universal.
There will be times we have to consider $p_0<1$.
\end{rem}
\begin{lem}
\label{lem:moving} For $0<p\le\infty$, and $M\ge\max(1,p^{-1})$,
we have 
\begin{align*}
\|\sum_{j}|R_{j}|\|_{L_{x}^{p}} & \le D_p\Bigl(\sum_{j}\oa_{j}^{\fc 1M(\fc 1{\apa_{1}}-\fc d{2p})}\Bigr)^{M},\\
\|\sum_{j}|\na R_{j}|\|_{L_{x}^{p}} & \lesssim D_p \Bigl(\sum_{j}\langle v_{j}\rangle^{\fc 1M}\oa_{j}^{\fc 1M(\fc 1{\apa_{1}}-\fc d{2p})}\Bigr)^{M}.
\end{align*}
\end{lem}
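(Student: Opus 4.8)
The whole statement reduces to two elementary facts: a quasi-triangle inequality for $L^p$ norms valid for every $0<p\le\infty$, and the embedding $\ell^1\hookrightarrow\ell^r$ for $r\ge1$. Set $q=\min(p,1)$. First I would record that for nonnegative measurable functions $h_j$ on $\mb R^d$,
\[
\Bigl\|\sum_{j}h_{j}\Bigr\|_{L_{x}^{p}}\le\Bigl(\sum_{j}\|h_{j}\|_{L_{x}^{p}}^{q}\Bigr)^{1/q}.
\]
For $1\le p\le\infty$ this is Minkowski's (triangle) inequality, with $q=1$; for $0<p<1$ it follows from the pointwise subadditivity $(\sum_{j}h_{j})^{p}\le\sum_{j}h_{j}^{p}$ together with monotonicity of the integral, with $q=p$.

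Next I would apply this with $h_{j}=|R_{j}|$ and insert the bound $\|R_{j}\|_{L_{x}^{p}}\le D_{p}\,\oa_{j}^{\fc{1}{\apa_{1}}-\fc{d}{2p}}$ from \eqref{eq:lprgr}, obtaining
\[
\Bigl\|\sum_{j}|R_{j}|\Bigr\|_{L_{x}^{p}}\le D_{p}\Bigl(\sum_{j}\oa_{j}^{q(\fc{1}{\apa_{1}}-\fc{d}{2p})}\Bigr)^{1/q}.
\]
Now use the hypothesis $M\ge\max(1,p^{-1})=q^{-1}$, so that $r:=Mq\ge1$. Writing $c_{j}=\oa_{j}^{\fc{1}{M}(\fc{1}{\apa_{1}}-\fc{d}{2p})}\ge0$ we have $\oa_{j}^{q(\fc{1}{\apa_{1}}-\fc{d}{2p})}=c_{j}^{r}$, hence
\[
\Bigl(\sum_{j}\oa_{j}^{q(\fc{1}{\apa_{1}}-\fc{d}{2p})}\Bigr)^{1/q}=\Bigl(\bigl(\textstyle\sum_{j}c_{j}^{r}\bigr)^{1/r}\Bigr)^{M}\le\Bigl(\sum_{j}c_{j}\Bigr)^{M},
\]
where the last step is $\|c\|_{\ell^{r}}\le\|c\|_{\ell^{1}}$ for $r\ge1$ (equivalently $\sum_j c_j^{r}\le(\sum_j c_j)^{r}$, seen by normalizing $\sum_j c_j=1$ so that $c_j^{r}\le c_j$). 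This gives the first estimate. The second follows identically, applying the quasi-triangle inequality to $h_{j}=|\na R_{j}|$, using $\|\na R_{j}\|_{L_{x}^{p}}\lesssim D_{p}\langle v_{j}\rangle\oa_{j}^{\fc{1}{\apa_{1}}-\fc{d}{2p}}$ and taking instead $c_{j}=\langle v_{j}\rangle^{1/M}\oa_{j}^{\fc{1}{M}(\fc{1}{\apa_{1}}-\fc{d}{2p})}$; raising to the power $q\le1$ leaves the hidden universal constant universal.

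\textbf{Main point to watch.} The only delicate case is $0<p<1$, where $L_{x}^{p}$ is not a normed space and Minkowski is unavailable; there one must use subadditivity of $t\mapsto t^{p}$, and it is precisely to absorb the resulting loss that the hypothesis $M\ge p^{-1}$ is imposed — it guarantees $Mq\ge1$, which is exactly what the $\ell^{r}\hookrightarrow\ell^{1}$ step needs. Beyond this, the proof is just bookkeeping of exponents, so I expect no genuine obstacle.
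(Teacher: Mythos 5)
Your proof is correct and reaches the same key intermediate bound $\|\sum_j|R_j|\|_{L^p}\le(\sum_j\|R_j\|_{L^p}^{1/M})^M$ as the paper, which obtains it by writing the left-hand side as $\|(\sum_j|R_j|)^{1/M}\|_{L^{Mp}}^M$, using subadditivity of $t\mapsto t^{1/M}$, and then applying Minkowski's inequality in $L^{Mp}$ (valid since $Mp\ge1$). Your repackaging --- the quasi-triangle inequality with exponent $q=\min(p,1)$ followed by the embedding $\ell^{1}\hookrightarrow\ell^{Mq}$ with $Mq\ge1$ --- is an equivalent bookkeeping of the same two elementary facts (subadditivity of concave powers and the triangle inequality in a genuine norm), so it is essentially the same argument.
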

\begin{proof}
The first inequality is true by the following computation: 
\begin{align*}
\|\sum_{j}|R_{j}|\|_{L_{x}^{p}} & =\|(\sum_{j}|R_{j}|)^{1/M}\|_{L_{x}^{Mp}}^{M}\\
 & \le\|\sum_{j}|R_{j}|^{1/M}\|_{L_{x}^{Mp}}^{M}\quad(\mbox{since }1/M\le1)\\
 & \le(\sum_{j}\||R_{j}|^{1/M}\|_{L_{x}^{Mp}})^{M}\quad(\mbox{since }Mp\ge1)\\
 & =(\sum_{j}\|R_{j}\|_{L_{x}^{p}}^{1/M})^{M}.
\end{align*}
Similarly, we have $\|\sum_{j}|\na R_{j}|\|_{L_{x}^{p}}\le(\sum_{j}\|\na R_{j}\|_{L_{x}^{p}}^{1/M})^{M}$,
which implies the second inequality.
\end{proof}
To avoid cumbersome notation, we define 
\begin{equation}
\begin{aligned}A_{p} & =A_{p}(\{\oa_{j}\})=\Bigl(\sum_{j}\oa_{j}^{\min(1,p)(\fc 1{\apa_{1}}-\fc d{2p})}\Bigr)^{\max(1,p^{-1})},\\
B_{p} & =B_{p}(\{\oa_{j}\},\{v_{j}\})=\Bigl(\sum_{j}\langle v_{j}\rangle^{\min(1,p)}\oa_{j}^{\min(1,p)(\fc 1{\apa_{1}}-\fc d{2p})}\Bigr)^{\max(1,p^{-1})},
\end{aligned}
\label{eq:defnAB}
\end{equation}
for $0<p\le\infty$. By Lemma \ref{lem:moving} (with $M=\max(1,p^{-1})$),
we have 
\begin{equation}
\|\sum_{j}|R_{j}|\|_{L_{x}^{p}}\le D_p A_{p},\quad\mbox{and}\quad
\|\sum_{j}|\na R_{j}|\|_{L_{x}^{p}}\lesssim D_p B_{p}.
\label{eq3.4}
\end{equation}
In particular, $\|T\|_{L_{x}^{p}}\le D_p A_{p}$, and 
$\|\na T\|_{L_{x}^{p}}\lesssim D_p B_{p}$. 

As discussed in the Introduction, to construct solutions of the
form $T+\eta$, we consider the operator 
\begin{equation}
\Phi\eta(t)=-i\int_{t}^{\infty}e^{i(t-\tau)\Da}[G(\tau)+H(\tau)]\, d\tau,\label{eq:Phi_ST}
\end{equation}
where 
\[
G=f(T+\eta)-f(T),\quad\mbox{and}\quad H=f(T)-\sum_{j}f(R_{j}).
\]
Define
\begin{equation}
v_{*}\coloneqq\fc 12\inf_{j,k\in\mb N,\, j<k}
\min(1,\oa_{j}^{1/2},\oa_{k}^{1/2})|v_{j}-v_{k}|.\label{eq:vstar}
\end{equation}
The following lemma gives more precise and complete estimates of $H$
than those given in \cite[Lemma 4.2, Lemma 4.4]{LeCoz_Tsai}. 
\begin{lem}
\label{lem:imp} We have the following estimates for the source term $H$:
\begin{itemize}
\item [\textup{(H0)}] Fix any $r_0>0$. 
For $r>s>r_{0}$ and $t\ge0$, 
\[
\|H(t)\|_{L_{x}^{r}}\lesssim_{r_0}(\sum_{i=1,2}A_{(\apa_{i}+1)s}^{\apa_{i}+1})^{s/r}(\sum_{i=1,2}A_{\infty}^{\apa_{i}+1})^{1-s/r}e^{-a(1-s/r)v_{*}t}.
\]

\item [\textup{(H1)}] Fix any $r_{1}>0$. 
For $r>s>r_{1}$ and $t\ge0$,
\[
\|\na H(t)\|_{L_{x}^{r}}\lesssim_{r_1}(\sum_{i=1,2}A_{\apa_{i}q}^{\apa_{i}}B_{p})^{s/r}(\sum_{i=1,2}A_{\infty}^{\apa_{i}}B_{\infty})^{1-s/r}e^{-a\min(\apa_{1},1)(1-s/r)v_{*}t},
\]
where $p,q$ are arbitrary numbers in $(0,\infty]$ satisfying $\fc 1q+\fc 1p=\fc 1s$.
\end{itemize}
\end{lem}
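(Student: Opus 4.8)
\emph{Proof proposal.}
The plan is to run the same three-step scheme for (H0) and (H1): \textbf{(i)} a pointwise bound of $|H|$ (resp.\ $|\na H|$) as a sum of products of the bump functions of \eqref{eq:raw}; \textbf{(ii)} extraction of an exponential-in-time factor from every product involving two \emph{distinct} solitons, using the separation built into \eqref{eq:vstar}; \textbf{(iii)} the interpolation $\|g\|_{L^r_x}\le\|g\|_{L^s_x}^{s/r}\|g\|_{L^\infty_x}^{1-s/r}$ (valid since $s<r$), which generates the exponents $s/r$, $1-s/r$ and lets the time decay ride on the $L^\infty$ factor. The elementary fact driving step (ii) is that, for $j\ne\ell$,
\[
\oa_j^{1/2}|x-v_jt|+\oa_\ell^{1/2}|x-v_\ell t|\ \ge\ \min(\oa_j^{1/2},\oa_\ell^{1/2})\,|v_j-v_\ell|\,t\ \ge\ 2v_*t
\]
by the triangle inequality and \eqref{eq:vstar}; combined with \eqref{eq:raw} this gives, pointwise in $x$, $|R_j(t,x)||R_\ell(t,x)|\le D^2\oa_j^{1/\apa_1}\oa_\ell^{1/\apa_1}e^{-2av_*t}$ and $|R_\ell(t,x)||\na R_j(t,x)|\lesssim\langle v_j\rangle\oa_j^{1/\apa_1}\oa_\ell^{1/\apa_1}e^{-2av_*t}$. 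For the $L^s$ factor I will instead use a crude triangle-inequality bound, which keeps spatial decay (hence really lies in $L^s$); the two displayed product bounds cost all spatial decay but buy the exponential in $t$, which is what the $L^\infty$ factor needs.

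For \textbf{(H0)}: since $|f(w)|\lesssim\sum_i|w|^{\apa_i+1}$ by \eqref{eq:fineq0} and $\apa_i+1\ge1$, the triangle inequality gives $|H|\lesssim\sum_i(\sum_j|R_j|)^{\apa_i+1}$, so by \eqref{eq3.4} and Remark \ref{rem:p_lb} (since $(\apa_i+1)s>r_0$), $\|H(t)\|_{L^s_x}\lesssim_{r_0}\sum_i A_{(\apa_i+1)s}^{\apa_i+1}$. For the $L^\infty$ factor I would start from the estimate established in the proof of Proposition \ref{prop:decomp}, which, after bounding $\{|R_j|^{\apa_i}+(\sum_{\ell\ne j}|R_\ell|)^{\apa_i}\}\le 2(\sum_\ell|R_\ell|)^{\apa_i}$, reads $|H|\lesssim\sum_i(\sum_\ell|R_\ell|)^{\apa_i-1}Z$ with $Z:=\sum_j\sum_{\ell\ne j}|R_j||R_\ell|$. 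Bound $Z$ in two ways: trivially by $(\sum_\ell|R_\ell|)^2$, and by $D^2A_\infty^2e^{-2av_*t}$ (from the product bound above and $\sum_j\sum_{\ell\ne j}\oa_j^{1/\apa_1}\oa_\ell^{1/\apa_1}\le A_\infty^2$); interpolating, $Z\le(\sum_\ell|R_\ell|)^{2(1-\ka)}\,(D^2A_\infty^2e^{-2av_*t})^{\ka}$ with $\ka:=\min(\tfrac{\apa_i+1}{2},1)\in[\tfrac12,1]$. Then $(\sum_\ell|R_\ell|)^{\apa_i-1}Z\le(\sum_\ell|R_\ell|)^{\apa_i+1-2\ka}D^{2\ka}A_\infty^{2\ka}e^{-2\ka av_*t}$, where $\apa_i+1-2\ka\ge0$; taking $L^\infty_x$ and using $\|\sum_\ell|R_\ell|\|_{L^\infty_x}\le DA_\infty$ yields $\|H(t)\|_{L^\infty_x}\lesssim\sum_i A_\infty^{\apa_i+1}e^{-2\ka av_*t}\le\sum_i A_\infty^{\apa_i+1}e^{-av_*t}$ (as $2\ka\ge1$). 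Interpolating the two bounds gives (H0).

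For \textbf{(H1)}: differentiating by the chain rule \eqref{eq:fchain} (all series absolutely convergent once $A_p<\infty$), $\na H=\sum_j[(f_z(T)-f_z(R_j))\na R_j+(f_{\bar z}(T)-f_{\bar z}(R_j))\ol{\na R_j}]$, so \eqref{eq:fineq1}, the bound $|R_j|+|T-R_j|\le\sum_\ell|R_\ell|$ and the identity $\min(\apa_i,1)+\max(\apa_i-1,0)=\apa_i$ give
\[
|\na H|\ \lesssim\ \sum_i\Bigl(\sum_\ell|R_\ell|\Bigr)^{\max(\apa_i-1,0)}\sum_j\Bigl(\sum_{\ell\ne j}|R_\ell|\Bigr)^{\min(\apa_i,1)}|\na R_j|.
\]
For the $L^s$ factor, $(\sum_{\ell\ne j}|R_\ell|)^{\min(\apa_i,1)}\le(\sum_\ell|R_\ell|)^{\min(\apa_i,1)}$ reduces this to $\sum_i(\sum_\ell|R_\ell|)^{\apa_i}\sum_j|\na R_j|$, and Hölder with $\tfrac1q+\tfrac1p=\tfrac1s$ together with \eqref{eq3.4} and Remark \ref{rem:p_lb} gives $\|\na H(t)\|_{L^s_x}\lesssim_{r_1}\sum_i A_{\apa_iq}^{\apa_i}B_p$. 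For the $L^\infty$ factor I keep $\sum_{\ell\ne j}|R_\ell|$ to the power $\min(\apa_i,1)\le 1$, note $\sum_j\sum_{\ell\ne j}|R_\ell||\na R_j|\lesssim\min\{(\sum_\ell|R_\ell|)(\sum_j|\na R_j|),\,A_\infty B_\infty e^{-2av_*t}\}$, and interpolate (applying Hölder in $j$ when $\apa_i<1$, via $(\sum_{\ell\ne j}|R_\ell|)^{\apa_i}|\na R_j|=((\sum_{\ell\ne j}|R_\ell|)|\na R_j|)^{\apa_i}|\na R_j|^{1-\apa_i}$) to obtain $\|\na H(t)\|_{L^\infty_x}\lesssim\sum_i A_\infty^{\apa_i}B_\infty\,e^{-2a\min(\apa_i,1)v_*t}\lesssim\bigl(\sum_i A_\infty^{\apa_i}B_\infty\bigr)e^{-a\min(\apa_1,1)v_*t}$, the worst index being $i=1$ since $\apa_1\le\apa_2$. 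Interpolating as in step (iii) gives (H1).

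The step I expect to be the main obstacle is the $L^\infty$ estimate when $\apa_i<1$. The naïve pairing — splitting $(\sum_{\ell\ne j}|R_\ell|)^{c}\le\sum_{\ell\ne j}|R_\ell|^{c}$ and extracting one companion soliton — leaves prefactors of the form $\sum_j\oa_j^{\ga}$ which do \emph{not} resum into powers of $A_\infty=\sum_j\oa_j^{1/\apa_1}$, because $t\mapsto t^{c}$ is subadditive the unhelpful way for $c<1$; the fix used above is to keep the sums $\sum_\ell|R_\ell|$ and $\sum_j|\na R_j|$ intact and play the trivial product bound off against the $e^{-cv_*t}$ product bound by interpolation (plus Hölder on the outer $j$-sum), so that what is left is a constant-in-$x$ multiple of $e^{-cv_*t}$ with $c$ as large as needed, whose $L^\infty_x$ norm is immediate. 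The rest is routine bookkeeping: absolute convergence of all the series (this is where finiteness of $A_p,B_p$ is used) and propagation of the constants $D_p$ of \eqref{eq:lprgr}, which is why the thresholds $r_0,r_1$ occur and the estimates are stated only up to $\lesssim_{r_0},\lesssim_{r_1}$ (in (H1) this is also where the admissible range of $p,q$ is tacitly restricted).
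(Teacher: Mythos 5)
Your proposal is correct, and it reaches both (H0) and (H1) by a genuinely different mechanism for the key $L^\infty_x$ decay estimates. The paper singles out, for each fixed $(t,x)$, the ``nearest'' soliton index $m=m(t,x)$ and shows via \eqref{eq3.7} that every \emph{other} soliton satisfies $|x-v_jt|\ge\fc12|v_j-v_m|t$, so that the factors $\sum_{j\ne m}|R_j|$ (in (H0)) and, after a case split $j\ne m$ versus $j=m$, the factors $|\na R_j|$ or $(\sum_{\ell\ne m}|R_\ell|)^{\min(\apa_i,1)}$ (in (H1)) each carry $e^{-av_*t}$. You instead exploit the symmetric pairwise bound $\oa_j^{1/2}|x-v_jt|+\oa_\ell^{1/2}|x-v_\ell t|\ge 2v_*t$ for $j\ne\ell$, which makes every cross product $|R_j||R_\ell|$ or $|R_\ell||\na R_j|$ decay like $e^{-2av_*t}$ with coefficients that resum exactly into $A_\infty^2$ and $A_\infty B_\infty$; the price is the extra interpolation between the trivial bound and the decaying bound (with $\ka=\min(\fc{\apa_i+1}{2},1)$ in (H0)) and the H\"older-in-$j$ step in (H1) for $\apa_i<1$, which you correctly identify as the delicate point and handle properly (the exponents $\apa_i+1-2\ka=\max(\apa_i-1,0)\ge0$ and the rate $2a\min(\apa_i,1)v_*\ge a\min(\apa_1,1)v_*$ both check out, using $\apa_1\le\apa_2$ for the worst index). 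The $L^s_x$ bounds and the final interpolation $\|\cdot\|_{L^r_x}\le\|\cdot\|_{L^s_x}^{s/r}\|\cdot\|_{L^\infty_x}^{1-s/r}$ coincide with the paper's. The paper's nearest-soliton device avoids your interpolation gymnastics and is the more standard tool in multi-soliton constructions; your pairwise argument is more symmetric, needs no case analysis on $j=m$, and yields the same final constants and decay rates up to universal factors.
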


\begin{rem}
The inequalities are indeed true for all $r>s>0$, 
only that the multiplicative constants will then 
depend on $s$. For the upper bounds given in (H0) and (H1) to be 
under desirable control, there are actually natural
choices of $r_{0}$ and $r_{1}$ that are universal 
(depending only on $d,\apa_1,\apa_2$). We'll discuss 
this point right after the proof.
\end{rem}

\begin{proof}[Proof of Lemma \ref{lem:imp}]
Each assertion is proved by the same strategy as in \cite{LeCoz_Li_Tsai,LeCoz_Tsai}:
Prove the exponential decay in $t$ of the $L_{x}^{\infty}$ norm,
by singling out the soliton ``nearest'' to a fixed $(x,t)$. And
prove the boundedness of the $L_{x}^{s}$ norm independent of $t$.
Then the $L_{x}^{r}$ estimate follows by interpolation. 

\textsc{Proof of (H0).} For fixed $t,x$, let $m=m(t,x)\in\mb N$
be such that 
\[
|x-v_{m}t|=\min_{j\in\mb N}|x-v_{j}t|.
\]
Then for $j\ne m$, 
\[
|x-v_{j}t|=|x-v_{m}t+(v_{m}-v_{j})t|\ge|v_{j}-v_{m}|t-|x-v_{m}t|\ge|v_{j}-v_{m}|t-|x-v_{j}t|,
\]
and hence 
\begin{equation}
|x-v_{j}t|\ge\fc 12|v_{j}-v_{m}|t.
\label{eq3.7}
\end{equation}
By \eqref{eq:fineq0},
\begin{align*}
|H| & \le|f(T)-f(R_{m})|+\sum_{j\ne m}|f(R_{j})|\\
 & \lesssim\sum_{i=1,2}\Bigl\{|T-R_{m}|(|R_{m}|+|T-R_{m}|)^{\apa_{i}}+\sum_{j\ne m}|R_{j}|^{\apa_{i}+1}\Bigr\}\\
 & \le\sum_{i=1,2}\Bigl\{(\sum_{j\ne m}|R_{j}|)(\sum_{j}|R_{j}|)^{\apa_{i}}+(\sum_{j\ne m}|R_{j}|)^{\apa_{i}+1}\Bigr\}.
\end{align*}
Thus, by \eqref{eq:raw} and the definition of $v_{*}$, 
\begin{align*}
|H| & \lesssim\sum_{i=1,2}\Bigl\{(\sum_{j\ne m}\oa_{j}^{\fc 1{\apa_{1}}}e^{-av_{*}t})(\sum_{j}\oa_{j}^{\fc 1{\apa_{1}}})^{\apa_{i}}+(\sum_{j\ne m}\oa_{j}^{\fc 1{\apa_{1}}}e^{-av_{*}t})^{\apa_{i}+1}\Bigr\}\\
 & \lesssim\sum_{i=1,2}\Bigl\{(\sum_{j\ne m}\oa_{j}^{\fc 1{\apa_{1}}})(\sum_{j}\oa_{j}^{\fc 1{\apa_{1}}})^{\apa_{i}}+(\sum_{j\ne m}\oa_{j}^{\fc 1{\apa_{1}}})^{\apa_{i}+1}\Bigr\} e^{-av_{*}t}\quad(t\ge0)\\
 & \lesssim(\sum_{i=1,2}A_{\infty}^{\apa_{i}+1})e^{-av_{*}t}.
\end{align*}
Now that the upper bound is independent of $x$ and $m$, we get
\begin{equation}
\|H(t)\|_{L_{x}^{\infty}}
\lesssim(\sum_{i=1,2}A_{\infty}^{\apa_{i}+1})e^{-av_{*}t}.\label{eq:H_infty}
\end{equation}
Next, we try to bound $\|H\|_{L_{x}^{s}}$ for finite $s>r_{0}>0$. 
By Proposition \ref{prop:decomp}, in particular its flexibility of 
choosing $\theta_{ij}$ and $\phi_{ij}$,
\begin{align*}
|H| & \lesssim\sum_{i=1,2}\sum_{j}\Bigl\{|R_{j}|^{\max(\apa_{i},1)}(\sum_{\ell\ne j}|R_{\ell}|)^{\min(\apa_{i},1)}+|R_{j}|(\sum_{\ell\ne j}|R_{\ell}|)^{\apa_{i}}\Bigr\}\\
 & \le\sum_{i=1,2}\sum_{j}\Bigl\{|R_{j}|^{\max(\apa_{i},1)}(\sum_{\ell}|R_{\ell}|)^{\min(\apa_{i},1)}+|R_{j}|(\sum_{\ell}|R_{\ell}|)^{\apa_{i}}\Bigr\}\\
 & \le\sum_{i=1,2}\Bigl\{(\sum_{j}|R_{j}|^{\max(\apa_{i},1)})(\sum_{\ell}|R_{\ell}|)^{\min(\apa_{i},1)}+(\sum_{j}|R_{j}|)(\sum_{\ell}|R_{\ell}|)^{\apa_{i}}\Bigr\}.
\end{align*}
Since $\sum_{j}|R_{j}|^{\max(\apa_{i},1)}\le(\sum_{j}|R_{j}|)^{\max(\apa_{i},1)}$ due to $\max(\apa_{i},1)\ge1$,
we get 
\[
|H|\lesssim\sum_{i=1,2}(\sum_{j}|R_{j}|)^{\apa_{i}+1}.
\]
Thus, for $s>r_{0}$, by \eqref{eq3.4} (and Remark \ref{rem:p_lb})
\begin{equation}
\|H\|_{L_{x}^{s}}\lesssim\sum_{i=1,2}\|\sum_{j}|R_{j}|\|_{L_{x}^{(\apa_{i}+1)s}}^{\apa_{i}+1}\lesssim_{r_0}
\sum_{i=1,2}A_{(\apa_{i}+1)s}^{\apa_{i}+1}.\label{eq:H_s}
\end{equation}
By \eqref{eq:H_infty} and \eqref{eq:H_s}, for $r>s>r_{0}$, we have
\[
\|H\|_{L_{x}^{r}}\le\|H\|_{L_{x}^{s}}^{s/r}\|H\|_{L_{x}^{\infty}}^{1-s/r}\lesssim_{r_0}
(\sum_{i=1,2}A_{(\apa_{i}+1)s}^{\apa_{i}+1})^{s/r}(\sum_{i=1,2}A_{\infty}^{\apa_{i}+1})^{1-s/r}e^{-a(1-s/r)v_{*}t}.
\]

\textsc{Proof of (H1).} By \eqref{eq:fchain} and \eqref{eq:fineq1},
\begin{align}
|\na H| & \le\sum_{j}(|f_{z}(T)-f_{z}(R_{j})|+|f_{\bar{z}}(T)-f_{\bar{z}}(R_{j})|)|\na R_{j}|\nonumber \\
 & \lesssim\sum_{i=1,2}\sum_{j}(|T-R_{j}|)^{\min(\apa_{i},1)}(|T-R_{j}|+|R_{j}|)^{\max(\apa_{i}-1,0)}|\na R_{j}|\nonumber \\
 & \le\sum_{i=1,2}\sum_{j}(\sum_{\ell\ne j}|R_{\ell}|)^{\min(\apa_{i},1)}(\sum_{\ell}|R_{\ell}|)^{\max(\apa_{i}-1,0)}|\na R_{j}|\label{eq:3rdline}\\
 & \lesssim\sum_{i=1,2}A_{\infty}^{\max(\apa_{i}-1,0)}E_{i},\nonumber 
\end{align}
where
\[
E_{i}:=\sum_{j}(\sum_{\ell\ne j}|R_{\ell}|)^{\min(\apa_{i},1)}|\na R_{j}|.
\]
Let $m=m(t,x)\in\mb N$ be as above. By \eqref{eq:raw} and \eqref{eq3.7}, 
\begin{align*}
E_{i} & \lesssim\sum_{j\ne m}(\sum_{\ell\ne j}|R_{\ell}|)^{\min(\apa_{i},1)}|\na R_{j}|+(\sum_{\ell\ne m}|R_{\ell}|)^{\min(\apa_{i},1)}|\na R_{m}|\\
 & \lesssim\sum_{j\ne m}A_{\infty}^{\min(\apa_{i},1)}|\na R_{j}|+(\sum_{\ell\ne m}|R_{\ell}|)^{\min(\apa_{i},1)}B_{\infty}\\
 & \lesssim A_{\infty}^{\min(\apa_{i},1)}B_{\infty}e^{-av_{*}t}+B_{\infty}(A_{\infty}e^{-av_{*}t})^{\min(\apa_{i},1)}\\
 & \le(2A_{\infty}^{\min(\apa_{i},1)}B_{\infty})e^{-a\min(\apa_{i},1)v_{*}t}.\quad(t\ge0)
\end{align*}
Thus
\begin{equation}
\|\na H(t)\|_{L_{x}^{\infty}}\lesssim(\sum_{i=1,2}A_{\infty}^{\apa_{i}}B_{\infty})e^{-a\min(\apa_{1},1)v_{*}t}.\label{eq:gH_infty}
\end{equation}
On the other hand, from \eqref{eq:3rdline}, 
\begin{align*}
|\na H| & \lesssim\sum_{i=1,2}\sum_{j}(\sum_{\ell}|R_{\ell}|)^{\min(\apa_{i},1)}(\sum_{\ell}|R_{\ell}|)^{\max(\apa_{i}-1,0)}|\na R_{j}|\\
 & =\sum_{i=1,2}(\sum_{\ell}|R_{\ell}|)^{\apa_{i}}(\sum_{j}|\na R_{j}|).
\end{align*}
Hence, for $s>r_{1}$,
\begin{equation}
\|\na H\|_{L_{x}^{s}}
\lesssim\sum_{i=1,2}\|(\sum_{\ell}|R_{\ell}|)^{\apa_{i}}\|_{L_{x}^{q}}
\|\sum_{j}|\na R_{j}|\|_{L^{p}}
\lesssim_{r_1}\sum_{i=1,2}A_{\apa_{i}q}^{\apa_{i}}B_{p},\label{eq:gH_s}
\end{equation}
where $p,q$ are any numbers in $(0,\infty]$ satisfying $\fc 1q+\fc 1p=\fc 1s$.
By \eqref{eq:gH_infty} and \eqref{eq:gH_s}, for $r>s>r_{1}$, we
have
\[
\|\na H(t)\|_{L_{x}^{r}}\lesssim_{r_1}
(\sum_{i=1,2}A_{\apa_{i}q}^{\apa_{i}}B_{p})^{s/r}(\sum_{i=1,2}A_{\infty}^{\apa_{i}}B_{\infty})^{1-s/r}e^{-a\min(\apa_{1},1)(1-s/r)v_{*}t}.\qedhere
\]
\end{proof}

Now we explain how the values
of $A_{p},B_{p}$ (here $p$ is regarded as a parameter) and $v_{*}$
should be controlled, by adjusting $\{\oa_{j}\}$ and $\{v_{j}\}$
of the profile $T$. As is mentioned, we need $\oa_{j}\to0$ and 
$|v_{j}-v_{k}|\to\infty$. Precisely, \emph{we will need the flexibility
of making $v_{*}$ as large as we like, and at the same time controlling
the sizes of $A_{p}$ and $B_{p}$.} As to this purpose, the first
obvious observation is that $A_{p}<\infty$ can be true if and only
if $\fc 1{\apa_{1}}-\fc d{2p}>0$, i.e. $p>\fc{d\apa_{1}}2$. Next,
a little thought shows that $v_{*}>0$ and $B_{p}<\infty$ can hold
simultaneously only if $\fc 1{\apa_{1}}-\fc d{2p}>\fc 12$, which
is equivalent to $\apa_{1}<2$ and $p>\fc{d\apa_{1}}{2-\apa_{1}}$.
It turns out that these minimum requirements are sufficient. We give
the relevant facts in the next lemma. For convenience, we define 
\begin{equation}
\label{eq:CACB.def}
\ml C_{A}=(\fc{d\apa_{1}}2,\infty]; \quad 
\ml C_{B}=(\fc{d\apa_{1}}{2-\apa_{1}},\infty] \quad (\text{if } \apa_{1}<2).
\end{equation}

\begin{lem}
\label{lem:compete}~
\begin{itemize}
\item[\textup{(a)}]  For $\infty\ge q>p\in\ml C_{A}$, we have 
$A_{q}<\max(1,\oa_{*})^{\fc 1{\apa_{1}}}A_{p}$ whenever 
$A_{p}<\infty$. If $\apa_{1}<2$ and $\infty\ge q>p\in\ml C_{B}$,
we have $B_{q}<\max(1,\oa_{*})^{\fc 1{\apa_{1}}}B_{p}$ whenever 
$B_{p}<\infty$.

\item[\textup{(b)}] Suppose $q\in\ml C_{A}$, then for any constants 
$c,\La>0$, there exist $\{\oa_{j}\}$ and $\{v_{j}\}$ such that 
$A_{q}\le c$, and $v_{*}\ge\La$. If moreover $\apa_{1}<2$ and 
$p\in\ml C_{B}$, then $\{\oa_{j}\}$ and $\{v_{j}\}$
can be chosen so that $B_{p}\le c$ is also satisfied.

\end{itemize}
\end{lem}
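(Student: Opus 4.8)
The plan is to treat part (a) as a purely arithmetic comparison and part (b) as a two-parameter construction in which the frequencies $\{\oa_j\}$ and velocities $\{v_j\}$ are chosen essentially independently.

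For part (a), recall $A_q = \big(\sum_j \oa_j^{\min(1,q)(\fc1{\apa_1}-\fc d{2q})}\big)^{\max(1,q^{-1})}$, and observe that the exponent $\fc1{\apa_1}-\fc d{2q}$ is increasing in $q$; since $p\in\ml C_A$ means $\fc1{\apa_1}-\fc d{2p}>0$, this quantity is positive for all $r\in[p,q]$. I would split into the cases $q\le 1$ and $q>1$ (and likewise whether $p$ is above or below $1$), but the mechanism is the same in each: when $\oa_j\le\oa_*<1$ (which we may assume, as $\oa_j<\oa_*$ in Assumption \ref{assuT}), raising $\oa_j$ to a larger exponent only decreases each summand, so $\sum_j \oa_j^{(\text{larger exp})}\le \sum_j\oa_j^{(\text{smaller exp})}$; the outer power $\max(1,q^{-1})\le\max(1,p^{-1})$ works in the same favourable direction once the inner sum is $\le$ the corresponding sum for $p$, which is itself $<\infty$ hence (after possibly discarding finitely many terms or using $A_p<\infty$) $\ge$ or $\le 1$ — the clean way is to note $\sum_j\oa_j^{\rho}\le \max(1,\oa_*)^{?}\cdot(\cdots)$; the stated bound $A_q<\max(1,\oa_*)^{1/\apa_1}A_p$ absorbs the one genuinely lossy step, namely comparing $\oa_j^{\min(1,q)(\cdots)}$ with $\oa_j^{\min(1,p)(\cdots)}$ when the $\min$ changes from $p$ to $1$, which costs at most a factor $\oa_j^{(1-p)/\apa_1}\le \oa_*^{(1-p)/\apa_1}$ per term — here one uses $\fc1{\apa_1}-\fc d{2q}\le\fc1{\apa_1}$. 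The argument for $B_q$ versus $B_p$ is identical, with $\langle v_j\rangle^{\min(1,\cdot)}$ along for the ride and the condition $\apa_1<2$, $p\in\ml C_B$ exactly ensuring the relevant exponent stays positive.

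For part (b), the key structural point is that $A_q$ and $B_p$ depend on the $\oa_j$'s (and, for $B_p$, on $\langle v_j\rangle$) but \emph{not} on the \emph{differences} $|v_j-v_k|$, whereas $v_*$ depends only on those differences and on $\min(1,\oa_j^{1/2})$. So I would first fix the frequencies: choose $\oa_j = \oa_* 2^{-j}$ (or more generally a geometric sequence), so that since $q\in\ml C_A$ the exponent $\min(1,q)(\fc1{\apa_1}-\fc d{2q})=:\rho>0$, the sum $\sum_j\oa_j^{\rho}$ is a convergent geometric series; replacing $\oa_*$ by a smaller $\oa_0\le\oa_*$ multiplies this sum by $(\oa_0/\oa_*)^{\rho}\to 0$, so we can make $A_q\le c$. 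When we also need $B_p\le c$ with $p\in\ml C_B$, set $\rho':=\min(1,p)(\fc1{\apa_1}-\fc d{2p})>1/2\cdot\min(1,p)$ — more precisely $p\in\ml C_B$ gives $\fc1{\apa_1}-\fc d{2p}>\fc12$, so $\rho'>\fc12\min(1,p)$ — and now pick the velocities so that $\langle v_j\rangle$ grows only polynomially, say $|v_j|\approx j^{N}$ for a fixed large $N$; then $\langle v_j\rangle^{\min(1,p)}\oa_j^{\rho'}\lesssim j^{N\min(1,p)}\oa_0^{\rho'}2^{-j\rho'}$ is still summable, and again shrinking $\oa_0$ drives $B_p$ below $c$. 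Finally, with the $\oa_j$'s frozen, I would choose the \emph{directions} of the $v_j$'s (in $\mb R^d$) to be well-separated — e.g. take all $v_j$ collinear with spacings $|v_j-v_k|\gtrsim \La\oa_j^{-1/2}$ growing fast enough, which is compatible with $|v_j|\approx j^N$ by taking $N$ large — so that $\min(1,\oa_j^{1/2},\oa_k^{1/2})|v_j-v_k|\ge 2\La$ for all $j\ne k$, giving $v_*\ge\La$.

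The main obstacle is the interdependence in part (b): enforcing $v_*\ge\La$ forces $|v_j-v_k|$ to be \emph{large} (of order $\La\oa_j^{-1/2}\to\infty$ since $\oa_j\to0$), which in turn forces $\langle v_j\rangle\to\infty$, and these large velocities reappear inside $B_p$. The resolution — and the point that needs care rather than cleverness — is a counting argument showing that one can space out the $v_j$'s so that the $j$-th velocity has size only \emph{polynomial} in $j$ while all pairwise gaps exceed the required $\La\oa_j^{-1/2}$; since $\oa_j$ decays geometrically, $\oa_j^{-1/2}$ grows geometrically, so in fact $|v_j|$ must grow at least geometrically too, and one checks that $\langle v_j\rangle^{\min(1,p)}\oa_j^{\rho'}$ with $\oa_j$ geometric and $\langle v_j\rangle$ geometric of a controlled ratio is \emph{still} a convergent geometric series (the exponent $\rho'$ can be split as $\rho' = \rho'' + \delta$ with $\rho''>0$ left to kill $\langle v_j\rangle^{\min(1,p)}$ and $\delta>0$ left for summability), so the construction closes. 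This is exactly where the threshold defining $\ml C_B$, namely $\fc1{\apa_1}-\fc d{2p}>\fc12$, is used: it gives the slack $\rho'$ needed to absorb the unavoidable growth of the velocities.
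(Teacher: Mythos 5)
Your overall route is the same as the paper's: part (a) by monotonicity of the exponents, part (b) by taking geometric frequencies and velocities whose pairwise gaps grow like $\oa_k^{-1/2}$, with the threshold $\fc 1{\apa_1}-\fc d{2p}>\fc 12$ defining $\ml C_B$ providing exactly the slack needed to keep $\sum_j\langle v_j\rangle^{\min(1,p)}\oa_j^{\min(1,p)(\fc1{\apa_1}-\fc d{2p})}$ summable. Your final paragraph correctly abandons the polynomial-velocity idea, and the corrected version coincides with the paper's explicit choice $\oa_j=\oa_*\rho^{2j}$, $|v_j|=\ga\sum_{\ell=2}^{j}\rho^{-\ell}$, where smallness of $A_q$ and $B_p$ is obtained by letting $\rho\to0$ (equivalently, by your device of shrinking the top frequency) while $v_*\ge\La$ persists once $\ga\gtrsim\oa_*^{-1/2}\La$.

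There is, however, a genuine gap in your part (a): the treatment of the outer exponents $\max(1,q^{-1})\le\max(1,p^{-1})$. You claim this ``works in the same favourable direction once the inner sum is $\le$ the corresponding sum for $p$,'' hedged by ``$\ge$ or $\le 1$'' — but if the inner sum $S_p=\sum_j\oa_j^{e_p}$ is less than $1$, then $S_p^{\max(1,q^{-1})}>S_p^{\max(1,p^{-1})}$ and the inequality goes the wrong way; comparing the two sums first and the outer powers afterwards does not close. The paper avoids this by applying the term-wise subadditivity $(\sum_j a_j)^{\theta}\le\sum_j a_j^{\theta}$ with $\theta=\min(1,p)/\min(1,q)\le1$ \emph{before} comparing exponents (the same inequality already used in Lemma \ref{lem:moving}), which converts the outer power from $\max(1,q^{-1})$ to $\max(1,p^{-1})$ at the level of individual terms. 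Relatedly, the factor $\max(1,\oa_*)^{1/\apa_1}$ does not come from the change in $\min(1,\cdot)$ as you suggest; it comes from normalizing, i.e.\ writing $\oa_j^{q_2}=\oa_*^{q_2}(\oa_j/\oa_*)^{q_2}$ so that the base $\oa_j/\oa_*$ is genuinely $<1$ before increasing the exponent from $p_2$ to $q_2$ (this is what handles the case $\oa_*>1$, which you cannot simply assume away). Both fixes are one line each, but as written your step would fail.
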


The proofs of these facts are elementary and are given in 
Appendix \ref{appI}.
Briefly, (a) says $A_{q}\lesssim A_{p}$ and $B_{q}\lesssim B_{p}$
for $q\ge p$. As a consequence, when there are several $A_{p}$ or
$B_{p}$ to be controlled, it suffices to control those having smaller
$p$. And (b) is exactly the desired control. (a) and (b)
will be fundamental for the effectiveness of our estimates of $G$
and $H$. 

For the construction of soliton trains in this section, the needed estimates will be derived from
the dispersive inequality: If $p\in[2,\infty]$ and $t\ne0$, 
\begin{equation}
\|e^{it\Da}u\|_{L^{p}(\mb R^{d})}\le(4\pi|t|)^{-d(\fc 12-\fc 1p)}\|u\|_{L^{p'}(\mb R^{d})}\quad\forall\, u\in L^{p'}(\mb R^{d}).\label{eq:dispersive}
\end{equation}
We now give our first main result. 

\begin{thm}
\label{thm:single1} Let $d=1$, and $f$  
satisfy Assumptions \ref{assuF} and \ref{assuT}. 
Suppose moreover $\apa_{1}\ge 1$. Then for any finite $\rho>0$, 
there is a constant $\la_{0}>0$ such that
the following holds: For $\la_{0}\le\la<\infty$, there exist 
solutions of \eqref{eq:NLSE} of the form \eqref{eq:T-def} 
for $t \ge 0$, with
\begin{equation}
\sup_{t\ge 0}e^{\la t}\|\eta(t)\|_{L_{x}^{2}\cap L_{x}^{\infty}}\le\rho.\label{eq:Thm1_norm}
\end{equation}
\end{thm}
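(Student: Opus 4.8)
The plan is to set up the map $\Phi$ from \eqref{eq:Phi_ST} on the Banach space
\[
X_\la = \Bigl\{ \eta : [0,\infty)\times\R \to \mathbb{C} : \|\eta\|_{X_\la} := \sup_{t\ge 0} e^{\la t}\bigl( \|\eta(t)\|_{L^2_x} + \|\eta(t)\|_{L^\infty_x} \bigr) < \infty \Bigr\},
\]
and to show, for suitable choices of $\{\oa_j\}$ and $\{v_j\}$ (via Lemma \ref{lem:compete}(b)) and all large $\la$, that $\Phi$ is a contraction on the closed ball $B_\rho \subset X_\la$ of radius $\rho$. The fixed point then yields the desired solution with \eqref{eq:Thm1_norm}. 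The two Lebesgue exponents $p=2$ and $p=\infty$ are chosen because the dispersive estimate \eqref{eq:dispersive} maps $L^2 \to L^2$ (no decay, but isometry) and, for $p=\infty$, $L^1 \to L^\infty$ with a $|t-\tau|^{-d/2} = |t-\tau|^{-1/2}$ factor in $d=1$, which is integrable near $\tau = t$ and gives time decay for large $\tau$; the intermediate $L^{p'}$-norms of the source terms will be interpolated between these.

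First I would estimate $\|\Phi\eta(t)\|_{L^\infty_x}$. By \eqref{eq:dispersive} with $p=\infty$,
\[
\|\Phi\eta(t)\|_{L^\infty_x} \lesssim \int_t^\infty |t-\tau|^{-1/2} \bigl( \|G(\tau)\|_{L^1_x} + \|H(\tau)\|_{L^1_x} \bigr)\, d\tau,
\]
and similarly $\|\Phi\eta(t)\|_{L^2_x} \lesssim \int_t^\infty \bigl( \|G(\tau)\|_{L^2_x} + \|H(\tau)\|_{L^2_x} \bigr)\, d\tau$ since $e^{i(t-\tau)\Da}$ is unitary on $L^2$. For the source term $H$, I apply Lemma \ref{lem:imp}(H0): choosing $r=1$ or $r=2$ and an $s \in (r_0, r)$ with $r_0$ small but fixed (one needs $(\apa_i+1)s$ in $\ml C_A$, which forces $s > d\apa_1/(2(\apa_i+1))$; since $\apa_1 \ge 1$ we have $d\apa_1/(2(\apa_i+1)) < 1/2 \le 1$, so this is compatible with $s<1$ and $s<2$), we get $\|H(\tau)\|_{L^r_x} \lesssim_{r_0} (\sum_i A_{(\apa_i+1)s}^{\apa_i+1})^{s/r}(\sum_i A_\infty^{\apa_i+1})^{1-s/r} e^{-a(1-s/r)v_* \tau}$. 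By Lemma \ref{lem:compete}(a) all the $A_p$ are comparable to $A_{d\apa_1/2 + \epsilon}$, so by Lemma \ref{lem:compete}(b) they can be made $\le c$ with $c$ as small as desired while $v_* \ge \La$ is as large as desired. Hence $\|H(\tau)\|_{L^r_x}$ decays like $e^{-\kappa v_* \tau}$ with a positive rate $\kappa = a(1-s/r)$; integrating against the kernel (using $\int_t^\infty |t-\tau|^{-1/2} e^{-\kappa v_*\tau}\,d\tau \lesssim v_*^{-1/2} e^{-\kappa v_* t}$ for the $L^\infty$ piece) gives, for $v_*$ large relative to $\la$, a bound $\le \tfrac{\rho}{4} e^{-\la t}$ provided $\la \le \kappa v_*$ — i.e. we first fix $\la$, then take $v_*$ large enough, then take the $A_p$'s small enough.

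For $G = f(T+\eta) - f(T)$, inequality \eqref{eq:fineq0} with $w_1 = T$, $w_2 = \eta$ gives $|G| \lesssim \sum_i (|\eta||T|^{\apa_i} + |\eta|^{\apa_i+1})$. Taking $L^2_x$ and $L^1_x$ norms and using Hölder with the bounds $\|T\|_{L^p_x} \le D_p A_p$ from \eqref{eq3.4} together with $\|\eta(\tau)\|_{L^2_x \cap L^\infty_x} \le \rho e^{-\la\tau}$, one obtains $\|G(\tau)\|_{L^1_x \cap L^2_x} \lesssim \bigl( \sum_i A_{p_i}^{\apa_i} \bigr) \rho e^{-\la\tau} + \rho^{\apa_1+1} e^{-(\apa_1+1)\la\tau} + \rho^{\apa_2+1}e^{-(\apa_2+1)\la\tau}$ for appropriate exponents $p_i$ in $\ml C_A$ (again reachable since $\apa_1 \ge 1$ keeps the thresholds below the needed range); after integrating against the dispersive kernel this is $\lesssim \bigl( \sum_i A_{p_i}^{\apa_i} \cdot \la^{-1/2} + \rho^{\apa_i} \la^{-1/2} \bigr) \rho e^{-\la t}$. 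Taking $\la \ge \la_0$ large makes the $\la^{-1/2}$ factors small, so the $\rho^{\apa_i+1}$ terms are absorbed and the linear-in-$\rho$ term has coefficient $< \tfrac14$; combined with the $H$ estimate this yields $\|\Phi\eta\|_{X_\la} \le \rho$. The contraction estimate $\|\Phi\eta - \Phi\tilde\eta\|_{X_\la} \le \tfrac12 \|\eta - \tilde\eta\|_{X_\la}$ is identical since $G$ depends on $\eta$ while $H$ does not, using the Lipschitz bound $|f(T+\eta)-f(T+\tilde\eta)| \lesssim \sum_i |\eta - \tilde\eta|(|T|+|\eta|+|\tilde\eta|)^{\apa_i}$ which follows from \eqref{eq:fineq1}.

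The main obstacle I anticipate is the bookkeeping of the order of quantifiers: $\rho$ is given first; then $\la_0$ must be chosen large depending on $\rho$ and on the universal data (so the $\la^{-1/2}$-type gains from the Duhamel integral beat the $\rho$-independent constants); then, separately, the profile parameters $\{\oa_j\},\{v_j\}$ must be chosen so that $v_* \ge \la$ (ensuring the $H$-contribution, which decays at rate $\sim v_* t$, dominates the required rate $\la t$) and simultaneously the $A_p$'s are small (ensuring the linear-in-$\eta$ part of $G$ and the size of $H$ are small) — and Lemma \ref{lem:compete}(b) must be invoked in a way that delivers \emph{both} at once, which is exactly what part (b) asserts. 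The one genuine constraint of the theorem, $\apa_1 \ge 1$, enters precisely in guaranteeing that the exponents $(\apa_i+1)s$ and $p_i$ we are forced to use lie in $\ml C_A = (d\apa_1/2, \infty]$ for $s$-values compatible with the dispersive kernel being integrable (for $d=1$ this is automatic once $\apa_1 < 2$, so in fact here the binding role of $\apa_1 \ge 1$ is mild — it is the $W^{1,\infty}$ version, Theorem \ref{thm:single2}, where it truly bites); I would double-check that for $d=1$ no gradient estimate is needed here, so the $B_p$'s and Lemma \ref{lem:imp}(H1) play no role, which simplifies the argument considerably.
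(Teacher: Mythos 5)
Your proposal follows essentially the same route as the paper: the same space $X_\la$, the same $G+H$ splitting, the dispersive estimate at $p=2$ and $p=\infty$, Lemma \ref{lem:imp}(H0) for $H$ (the paper takes $(r,s)=(2,1)$ and $(1,1/2)$), and Lemma \ref{lem:compete} to arrange small $A_p$'s and large $v_*$ for each fixed large $\la$. The estimates you write down are all valid under the stated hypotheses. One correction to your closing discussion, though: the hypothesis $\apa_1\ge 1$ does \emph{not} enter through membership of exponents in $\ml C_A$ --- for $d=1$ every exponent you need ($2\apa_i$, and $(\apa_i+1)s$ with $s=1$ or $s=1/2$) lies in $\ml C_A=(\apa_1/2,\infty]$ with no condition on $\apa_1$ whatsoever. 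The hypothesis bites at exactly one point: the $L^1_x$ bound for the pure-power term, $\||\eta|^{\apa_i+1}\|_{L^1_x}=\|\eta\|_{L^{\apa_i+1}_x}^{\apa_i+1}$, which is controlled by the $X_\la$-norm only through the interpolation \eqref{eq:Thm1_e}, valid for Lebesgue exponents in $[2,\infty]$; this forces $\apa_i+1\ge 2$, i.e.\ $\apa_1\ge 1$. So its role is not ``mild'' here --- without it your displayed term $\rho^{\apa_1+1}e^{-(\apa_1+1)\la\tau}$ in the bound for $\|G(\tau)\|_{L^1_x}$ would not follow. (Also, the Lipschitz bound you use in the contraction step follows from \eqref{eq:fineq0} applied to $w_1=T+\tilde\eta$, $w_2=\eta-\tilde\eta$, not from \eqref{eq:fineq1}; that is cosmetic.)
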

\begin{proof}
For $0<\la<\infty$, let $X=X_{\la}$ be the Banach space of all $\eta:[0,\infty)\times\mb R^{1}\to\mb C$
with norm $\|\eta\|_{X}$ defined by the left-hand side of \eqref{eq:Thm1_norm}.
By interpolation, we have
\begin{equation}
\|\eta(t)\|_{L_{x}^{p}}\le\|\eta\|_{X}e^{-\la t}\quad\forall\, p\in[2,\infty], \quad
\forall t\ge 0.
\label{eq:Thm1_e}
\end{equation}

Given $\rho\in(0,\infty)$, we will prove that, for sufficiently large
$\la$, there are $\{\oa_{j}\},\{v_{j}\}$ such that $\Phi$ (defined
in \eqref{eq:Phi_ST}) is a contraction mapping on the closed ball
$\{\eta\in X:\|\eta\|_{X}\le\rho\}$.

First, we give estimates for $\Phi$ to be a self-mapping. Given $\eta\in X$
with $\|\eta\|_{X}\le\rho$. For $p\in[2,\infty]$, the dispersive
inequality \eqref{eq:dispersive} implies 
\[
\|\Phi\eta(t)\|_{L_{x}^{p}}\lesssim\int_{t}^{\infty}|t-\tau|^{-(\fc 12-\fc 1p)}(\|G(\tau)\|_{L_{x}^{p'}}+\|H(\tau)\|_{L_{x}^{p'}})\, d\tau.
\]
To estimate $\|\Phi\eta\|_{X}$, we have to estimate $\|G(\tau)\|_{L_{x}^{2}}$,
$\|G(\tau)\|_{L_{x}^{1}}$, $\|H(\tau)\|_{L_{x}^{2}}$ and $\|H(\tau)\|_{L_{x}^{1}}$. 

By \eqref{eq:fineq0}, 
\[
|G|=|f(T+\eta)-f(T)|\lesssim\sum_{i=1,2}\left\{ |\eta||T|^{\apa_{i}}+|\eta|^{\apa_{i}+1}\right\} .
\]
For the first term, we have 
\begin{align}
\||\eta||T|^{\apa_{i}}(\tau)\|_{L_{x}^{2}} & \le\|\eta(\tau)\|_{L_{x}^{2}}\|T\|_{L_{x}^{\infty}}^{\apa_{i}}\lesssim\rho A_{\infty}^{\apa_{i}}e^{-\la\tau},\label{eq:Thm_G_1_2}\\
\||\eta||T|^{\apa_{i}}(\tau)\|_{L_{x}^{1}} & \le\|\eta(\tau)\|_{L_{x}^{2}}\|T\|_{L_{x}^{2\apa_{i}}}^{\apa_{i}}\lesssim\rho A_{2\apa_{i}}^{\apa_{i}}e^{-\la\tau},\label{eq:Thm_G_1_1}
\end{align}
where notice that $2\apa_{1}\in\ml C_{A}$. For the second term, by
\eqref{eq:Thm1_e}, 
\begin{align}
\||\eta|^{\apa_{i}+1}(\tau)\|_{L_{x}^{2}} & =\|\eta(\tau)\|_{L_{x}^{2(\apa_{i}+1)}}^{\apa_{i}+1}\le\rho^{\apa_{i}+1}e^{-(\apa_{i}+1)\la\tau}\le\rho^{\apa_{i}+1}e^{-\la\tau},\label{eq:Thm_G_2_2}\\
\||\eta|^{\apa_{i}+1}(\tau)\|_{L_{x}^{1}} & =\|\eta(\tau)\|_{L_{x}^{\apa_{i}+1}}^{\apa_{i}+1}\le\rho^{\apa_{i}+1}e^{-(\apa_{i}+1)\la\tau}\le\rho^{\apa_{i}+1}e^{-\la\tau},\label{eq:Thm_G_2_1}
\end{align}
where in \eqref{eq:Thm_G_2_1} we use the assumption $\apa_{1}\ge1$. 

For $H$, taking $r=2$ and $s=1$ in Lemma \ref{lem:imp} (H0),
we get
\begin{equation}
\|H(\tau)\|_{L_{x}^{2}}\lesssim(\sum_{i=1,2}A_{\apa_{i}+1}^{\apa_{i}+1})^{1/2}(\sum_{i=1,2}A_{\infty}^{\apa_{i}+1})^{1/2}e^{-\fc a2v_{*}\tau},\label{eq:Thm1_H_2}
\end{equation}
with $\apa_{1}+1\in\ml C_{A}$. Taking $r=1$ and $s=1/2$,
we get 
\begin{equation}
\|H(\tau)\|_{L_{x}^{1}}\lesssim(\sum_{i=1,2}A_{(\apa_{i}+1)/2}^{\apa_{i}+1})^{1/2}(\sum_{i=1,2}A_{\infty}^{\apa_{i}+1})^{1/2}e^{-\fc a2v_{*}\tau},\label{eq:Thm1_H_1}
\end{equation}
with $(\apa_{1}+1)/2\in\ml C_{A}$.

Now suppose 
\begin{equation}
\fc a2v_{*}\ge\la.\label{eq:Thm_1_vstar}
\end{equation}
Then from \eqref{eq:Thm_G_1_2}, \eqref{eq:Thm_G_2_2} and \eqref{eq:Thm1_H_2},
we get 
\begin{equation}
\|\Phi\eta(t)\|_{L_{x}^{2}}\lesssim\int_{t}^{\infty}E_{1}e^{-\la\tau}\, d\tau=E_{1}\la^{-1}e^{-\la t},\label{eq:Thm1Phi2}
\end{equation}
where 
\[
E_{1}=\sum_{i=1,2}(\rho A_{\infty}^{\apa_{i}}+\rho^{\apa_{i}+1})+(\sum_{i=1,2}A_{\apa_{i}+1}^{\apa_{i}+1})^{1/2}(\sum_{i=1,2}A_{\infty}^{\apa_{i}+1})^{1/2}.
\]
From \eqref{eq:Thm_G_1_1}, \eqref{eq:Thm_G_2_1} and \eqref{eq:Thm1_H_1},
we get
\begin{equation}
\|\Phi\eta(t)\|_{L_{x}^{\infty}}\lesssim\int_{t}^{\infty}|t-\tau|^{-\fc 12}E_{2}e^{-\la\tau}\, d\tau=E_{2}\Ga(1/2)\la^{-1/2}e^{-\la t},\label{eq:Thm1Phiinfty}
\end{equation}
where 
\[
E_{2}=\sum_{i=1,2}(\rho A_{2\apa_{i}}^{\apa_{i}}+\rho^{\apa_{i}+1})+(\sum_{i=1,2}A_{(\apa_{i}+1)/2}^{\apa_{i}+1})^{1/2}(\sum_{i=1,2}A_{\infty}^{\apa_{i}+1})^{1/2},
\]
and $\Ga(z)=\int_{0}^{\infty}x^{z-1}e^{-x}dx$ is the Gamma function. 

Next, we give estimates for contractivity. Given $\eta_{1},\eta_{2}\in X$,
$\|\eta_{1}\|_{X},\|\eta_{2}\|_{X}\le\rho$. We have 
\[
\Phi\eta_{1}-\Phi\eta_{2}=-i\int_{t}^{\infty}e^{i(t-\tau)\Da}[f(T+\eta_{1})-f(T+\eta_{2})](\tau)\, d\tau.
\]
Hence, for $p\in[2,\infty]$, 
\[
\|(\Phi\eta_{1}-\Phi\eta_{2})(t)\|_{L_{x}^{p}}\lesssim\int_{t}^{\infty}|t-\tau|^{-(\fc 12-\fc 1p)}\|[f(T+\eta_{1})-f(T+\eta_{2})](\tau)\|_{L_{x}^{p'}}\, d\tau.
\]
By \eqref{eq:fineq0}, 
\begin{align*}
|f(T+\eta_{1})-f(T+\eta_{2})| & \lesssim\sum_{i=1,2}\Bigl\{|\eta_{1}-\eta_{2}||T+\eta_{2}|^{\apa_{i}}+|\eta_{1}-\eta_{2}|^{\apa_{i}+1}\Bigr\}\\
 & \lesssim\sum_{i=1,2}\left\{ |\eta_{1}-\eta_{2}||T|^{\apa_{i}}+|\eta_{1}-\eta_{2}|(|\eta_{1}|+|\eta_{2}|)^{\apa_{i}}\right\} .
\end{align*}
By \eqref{eq:Thm1_e}, 
\begin{align*}
\||\eta_{1}-\eta_{2}||T|^{\apa_{i}}(\tau)\|_{L_{x}^{2}} & \le\|(\eta_{1}-\eta_{2})(\tau)\|_{L_{x}^{2}}\|T\|_{L_{x}^{\infty}}^{\apa_{i}}\\
 & \lesssim A_{\infty}^{\apa_{i}}\|\eta_{1}-\eta_{2}\|_{X}e^{-\la\tau},
\end{align*}
\begin{align*}
\||\eta_{1}-\eta_{2}||T|^{\apa_{i}}(\tau)\|_{L_{x}^{1}} & \le\|(\eta_{1}-\eta_{2})(\tau)\|_{L_{x}^{2}}\|T(\tau)\|_{L_{x}^{2\apa_{i}}}^{\apa_{i}}\\
 & \lesssim A_{2\apa_{i}}^{\apa_{i}}\|\eta_{1}-\eta_{2}\|_{X}e^{-\la\tau},
\end{align*}
\begin{align*}
\||\eta_{1}-\eta_{2}|(|\eta_{1}|+|\eta_{2}|)^{\apa_{i}}(\tau)\|_{L_{x}^{2}} & \le\|(\eta_{1}-\eta_{2})(\tau)\|_{L_{x}^{2}}\|(|\eta_{1}|+|\eta_{2}|)(\tau)\|_{L_{x}^{\infty}}^{\apa_{i}}\\
 & \le(2\rho)^{\apa_{i}}\|\eta_{1}-\eta_{2}\|_{X}e^{-\la\tau},
\end{align*}
\begin{align*}
\||\eta_{1}-\eta_{2}|(|\eta_{1}|+|\eta_{2}|)^{\apa_{i}}(\tau)\|_{L_{x}^{1}} & \le\|(\eta_{1}-\eta_{2})(\tau)\|_{L_{x}^{2}}\|(|\eta_{1}|+|\eta_{2}|)(\tau)\|_{L_{x}^{2\apa_{i}}}^{\apa_{i}}\\
 & \le(2\rho)^{\apa_{i}}\|\eta_{1}-\eta_{2}\|_{X}e^{-\la\tau}.
\end{align*}
Hence
\begin{equation}
\|(\Phi\eta_{1}-\Phi\eta_{2})(t)\|_{L_{x}^{2}}\lesssim E_{3}\la^{-1}\|\eta_{1}-\eta_{2}\|_{X}e^{-\la t},\label{eq:Thm1D2}
\end{equation}
where $E_{3}=\sum_{i=1,2}(A_{\infty}^{\apa_{i}}+(2\rho)^{\apa_{i}})$.
And
\begin{equation}
\|(\Phi\eta_{1}-\Phi\eta_{2})(t)\|_{L_{x}^{\infty}}\lesssim E_{4}\Ga(1/2)\la^{-1/2}\|\eta_{1}-\eta_{2}\|_{X}e^{-\la t},\label{eq:Thm1Dinfty}
\end{equation}
where 
$E_{4}=\sum_{i=1,2}(A_{2\apa_{i}}^{\apa_{i}}+(2\rho)^{\apa_{i}})$. 

Now for any $\la>0$, Lemma \ref{lem:compete} ensures that we can 
choose $\{\oa_{j}\}$ and $\{v_{j}\}$ (depending on $\la$) such that 
$v_*$ satisfies \eqref{eq:Thm_1_vstar}, with all ``$A_p$'' being no 
larger than any preassigned number, say $A_p\le 1$. In particular,
we see there is a constant $E=E(\apa_1,\apa_2,\rho)>0$ such that 
$E_{\ell}\le E$ for $\ell =1,2,3,4$, given in \eqref{eq:Thm1Phi2},
\eqref{eq:Thm1Phiinfty}, \eqref{eq:Thm1D2}, and \eqref{eq:Thm1Dinfty}.
Thus, also from these inequalities, if $\la$ is large enough 
(i.e. $\la\in [\la_0,\infty)$ for some large enough $\la_0$), such 
choice of $\{\oa_j\},\{v_j\}$ gives
\begin{align*}
\|\Phi\eta(t)\|_{L^2_x\cap L^\infty_x}&\le \rho e^{-\la t}\\
%\|\Phi\eta(t)\|_{L^\infty_x}&\le \rho e^{-\la t}\\
\|(\Phi\eta_1-\Phi\eta_2)(t)\|_{L^2_x\cap L^\infty_x}
&\le \fc{1}{2}\|\eta_1-\eta_2\|_X e^{-\la t}.
%\\
%\|(\Phi\eta_1-\Phi\eta_2)(t)\|_{L^\infty_x}
%&\le \fc{1}{2}\|\eta_1-\eta_2\|_X e^{-\la t}.
\end{align*}
Hence $\Phi$ is a contraction mapping on the closed ball of $X$
with radius $\rho$.
\end{proof}

\begin{rem}
By the contraction mapping principle, for a fixed profile $T$ such
that $\Phi$ is a contraction, the error $\eta$ is unique within
the class we try to find it.
\end{rem}

Before giving the next theorem, we make some comments on 
the choices of $\{\oa_j\}$ and $\{v_j\}$.
As gradient estimate of $\eta$ is not involved in the 
previous proof, $B_p$ does not occur, and the last part of 
the proof can be replaced by the following:
1) First choose $\{\oa_j\}$ so that the coefficients
$E_1$ $\sim$ $E_4$ are finite 
(equivalently, all ``$A_p$'' are finite), then
2) choose $\la\ge \la_0$ sufficiently large so that 
\eqref{eq:Thm1Phi2} -- \eqref{eq:Thm1Dinfty} imply that 
$\Phi$ is a contraction mapping. And hence 3) the construction 
is done for any $\{v_j\}$ such that \eqref{eq:Thm_1_vstar} 
is satisfied.

It's then easy to see what choices of $\{\oa_j\},\{v_j\}$ 
are allowable. For example, since $A_{(\apa_{1}+1)/2}$ is 
the $A_{p}$ with smallest $p$ to be controlled in the proof, 
the construction is possible if and only if $\{\oa_j\}$ is 
such that $A_{(\apa_{1}+1)/2}<\infty$, i.e.
\begin{equation}\label{eq:sample}
\sum_{j}\oa_{j}^{\fc 1{\apa_{1}}-\fc 1{\apa_{1}+1}}<\infty.
\end{equation}

However, when there is $B_p$, the Step 3) of choosing 
$\{v_j\}$ will also influence the coefficients considered in 
Step 1). For later considerations,
we have given a proof that works even when $B_p$ is present:  
For every $\la$, Lemma \ref{lem:compete} ensures that
we can choose $\{\oa_j\}$ and $\{v_j\}$ so that $v_*$ is large 
enough and all $A_p, B_p$ are small. For large enough $\la$,
$\Phi$ is hence a contraction mapping for such 
$\{\oa_j\},\{v_j\}$. 
Moreover, giving precise
conditions as \eqref{eq:sample}, though possible, would 
be rather cumbersome. 
We shall hence satisfy ourselves 
with such vague statement as Theorem \ref{thm:single1}. 
Suffice it to say that, once a construction is done with 
some choice of $\{\oa_{j}\}$ and $\{v_{j}\}$, it is done 
with all other choices making the present $A_{p}$ and 
$B_{p}$ smaller and the $v_{*}$ larger. One easy way to 
obtain such ``better'' choices is by rescaling, i.e. by 
considering $\{\ka \oa_{j}\}$ and $\{\nu v_{k}\}$ for 
suitable positive constants $\ka,\nu$. The argument is 
routine and we omit the details.

We now turn to our next main result. First, notice that
the proof of Theorem \ref{thm:single1} fails for $d\ge 2$, 
since the dispersive inequality gives 
\[
\|\Phi\eta(t)\|_{L_{x}^{\infty}}\lesssim\int_{t}^{\infty}|t-\tau|^{-\fc d2}(\|G(\tau)\|_{L_{x}^{1}}+\|H(\tau)\|_{L_{x}^{1}})\, d\tau,
\]
where the singularity at $\tau=t$ is not integrable. 
As a consequence, we consider the following alternative way: 
Construct trains $T+\eta$ having $\|\eta(t)\|_{L_{x}^{2}}$ and 
$\|\na\eta(t)\|_{L_{x}^{r}}$ controls for some $r>d$. Then the 
$\|\eta(t)\|_{L_{x}^{\infty}}$ control follows from Sobolev 
embedding (Gagliardo-Nirenberg's inequality).
It turns out that we still need $d\le 3$. Moreover, due to some 
technical benefits, we also assume the 
$\|\na\eta(t)\|_{L_{x}^{2}}$ control in our construction 
(see Remark \ref{rem:needofg2} after the proof).

\begin{thm}
\label{thm:single2} Let $d\le 3$, and $f$ 
satisfy Assumptions \ref{assuF} and \ref{assuT}.
Suppose $2(\fc 12-\fc 1d)<\apa_{1}<2$
(the lower bound is empty unless $d=3$). 
Then for any finite $\rho>0$,
there are constants $r>d$, $\la_{0}>0$, and $0<c_{1}\le1$, such
that the following holds: For $\la_{0}\le\la<\infty$, there exist
solutions of \eqref{eq:NLSE}
of the form \eqref{eq:T-def}, with
\begin{equation}
\sup_{t\ge0}\bigl\{ e^{\la t}\|\eta(t)\|_{L_{x}^{2}}+e^{c_{1}\la t}\|\na\eta(t)\|_{L_{x}^{2}\cap L_{x}^{r}}\bigr\}\le\rho.\label{eq:Thm2_norm}
\end{equation}
If moreover $d=1$ and $\apa_{1}\ge1$, then the above assertion holds
with $r=\infty$.\end{thm}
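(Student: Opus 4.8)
The plan is to run the same fixed‑point scheme as in Theorem \ref{thm:single1}, but now in the Banach space $X=X_\la$ of functions $\eta\colon[0,\infty)\times\mb R^d\to\mb C$ with $\|\eta\|_X:=\sup_{t\ge0}\{e^{\la t}\|\eta(t)\|_{L^2_x}+e^{c_1\la t}\|\na\eta(t)\|_{L^2_x\cap L^r_x}\}<\infty$, and to show $\Phi$ (from \eqref{eq:Phi_ST}) is a contraction on the ball of radius $\rho$ for suitable $r>d$, $c_1\le1$, and large $\la$. The constant $c_1$ is forced to be $\le1$ because the gradient estimates, which have to absorb factors like $\langle v_j\rangle$ hidden in $B_p$ and a weaker decay rate $e^{-a\min(\apa_1,1)v_*t}$ in Lemma \ref{lem:imp}(H1), can only be closed with a slower exponential rate than the $L^2$ estimate. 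First I would fix $r>d$ close to $d$ so that Sobolev/Gagliardo–Nirenberg gives $\|\eta(t)\|_{L^\infty_x}\lesssim\|\eta(t)\|_{L^2_x}^{1-\theta}\|\na\eta(t)\|_{L^r_x}^{\theta}$, hence $\|\eta(t)\|_{L^\infty_x}\lesssim\|\eta\|_X e^{-((1-\theta)+c_1\theta)\la t}\le\|\eta\|_X e^{-c_1\la t}$; together with interpolation this yields $\|\eta(t)\|_{L^p_x}\lesssim\|\eta\|_X e^{-c_1\la t}$ for all $p\in[2,\infty]$ and $\|\na\eta(t)\|_{L^p_x}\lesssim\|\eta\|_X e^{-c_1\la t}$ for $p\in[2,r]$. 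The constraint $2(\tfrac12-\tfrac1d)<\apa_1<2$ is exactly what makes $A_p,B_p$ finite and controllable (it puts the relevant exponents into $\ml C_A$ and $\ml C_B$ from \eqref{eq:CACB.def}), and $d\le3$ is what makes the relevant Sobolev exponent $r$ admissible while keeping $r'\le2$.

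Next I would estimate $\|\Phi\eta(t)\|_{L^2_x}$ and $\|\na\Phi\eta(t)\|_{L^2_x\cap L^r_x}$ via the dispersive inequality \eqref{eq:dispersive}: for the $L^2$ part, $\|\Phi\eta(t)\|_{L^2_x}\lesssim\int_t^\infty(\|G\|_{L^2_x}+\|H\|_{L^2_x})\,d\tau$; for the $L^r$ gradient part, $\|\na\Phi\eta(t)\|_{L^r_x}\lesssim\int_t^\infty|t-\tau|^{-d(\frac12-\frac1r)}(\|\na G\|_{L^{r'}_x}+\|\na H\|_{L^{r'}_x})\,d\tau$, with the singularity integrable since $r>d$ forces $d(\tfrac12-\tfrac1r)<\tfrac12$. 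For $G=f(T+\eta)-f(T)$ I would use \eqref{eq:fineq0} and \eqref{eq:fineq1'}: the terms without a derivative on $\eta$ are bounded by $\sum_i(|\eta||T|^{\apa_i}+|\eta|^{\apa_i+1})$ and its gradient analogue, and in each case one puts $\eta$ (or $\na\eta$) in $L^2$ and the remaining factors in a high Lebesgue norm, using \eqref{eq:lprgr}/\eqref{eq3.4} (with the exponents in $\ml C_A$, $\ml C_B$) to bound $\|T\|_{L^p_x}$, $\|\na T\|_{L^p_x}$, and using the interpolated decay of $\|\eta\|_{L^p_x}$, $\|\na\eta\|_{L^p_x}$ for the error factors. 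This produces a bound $\lesssim (\text{coeff})\,\rho\, e^{-c_1\la\tau}$ for all the $G$‑terms, with the coefficient involving $A_p,B_p$ and $\rho$. For $H$ and $\na H$ I would invoke Lemma \ref{lem:imp}(H0),(H1) directly (choosing $r_0,r_1$ small but universal, and in (H1) choosing the split $\frac1q+\frac1p=\frac1s$ so that $A_{\apa_i q}$ and $B_p$ have exponents in $\ml C_A$, $\ml C_B$ respectively), obtaining $\|H(\tau)\|_{L^{r'}_x}\lesssim(\text{coeff})e^{-c'v_*\tau}$ and $\|\na H(\tau)\|_{L^{r'}_x}\lesssim(\text{coeff})e^{-c''v_*\tau}$ with $c',c''>0$ universal.

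Then, exactly as before, I would impose $\tfrac a2\min(\apa_1,1)\,c' v_*\ge \la$ (so that the $H$‑contribution decays at least as fast as the target rate), carry out the $\tau$‑integrals — the ones with the $|t-\tau|^{-d(\frac12-\frac1r)}$ singularity producing a Gamma‑function factor and a power $\la^{-(1-d(\frac12-\frac1r))}$, the others a factor $\la^{-1}$ — and collect everything into $\|\Phi\eta(t)\|_{L^2_x}\lesssim E\la^{-\da}e^{-\la t}$ and $\|\na\Phi\eta(t)\|_{L^2_x\cap L^r_x}\lesssim E\la^{-\da}e^{-c_1\la t}$ for some $\da>0$, where $c_1$ is chosen as the worst of the gradient decay rates obtained above ($c_1$ determined by $\theta$ from Sobolev and by $\min(\apa_1,1)$ from Lemma \ref{lem:imp}(H1)); an analogous computation with \eqref{eq:fineq0}, \eqref{eq:fineq1'} applied to $f(T+\eta_1)-f(T+\eta_2)$ gives the contraction estimate $\|(\Phi\eta_1-\Phi\eta_2)(t)\|_{L^2_x}+\|\na(\Phi\eta_1-\Phi\eta_2)(t)\|_{\cdots}\lesssim E\la^{-\da}\|\eta_1-\eta_2\|_X e^{-c_1\la t}$. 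Finally, by Lemma \ref{lem:compete}(b) I choose $\{\oa_j\},\{v_j\}$ (depending on $\la$) so that all the relevant $A_p$, $B_p$ are $\le1$ and $v_*$ is as large as needed; since then $E$ is bounded by a universal constant depending only on $\apa_1,\apa_2,\rho$, taking $\la\ge\la_0$ large kills the $\la^{-\da}$ and makes $\Phi$ a contraction on the ball of radius $\rho$. The last sentence of the theorem ($d=1$, $\apa_1\ge1$, $r=\infty$) is immediate: for $d=1$ the Sobolev embedding $W^{1,2}\hookrightarrow L^\infty$ already gives $\|\eta\|_{L^\infty_x}\lesssim\|\eta\|_{H^1_x}$, so one may take $r=\infty$ directly and run the same argument (this is consistent with Theorem \ref{thm:single1}). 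The main obstacle I expect is the bookkeeping of the gradient terms: making sure that every factor in $\na G$ and $\na H$ of the form $(|T|+|\eta|)^{\max(\apa_i-1,0)}|\eta|^{\min(\apa_i,1)}|\na T|$ etc.\ can be split into a $\|\cdot\|_{L^{r'}_x}$ product with each Lebesgue exponent landing in $\ml C_A$ or $\ml C_B$ — this is precisely where the hypotheses $2(\tfrac12-\tfrac1d)<\apa_1<2$ and $d\le3$ are consumed, and where one has to commit to the value of $c_1$ and check the decay rates match up.
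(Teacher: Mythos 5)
Your plan is essentially the paper's own proof: the same weighted space with separate rates $\la$ and $c_1\la$ for $\eta$ and $\na\eta$, Gagliardo--Nirenberg to recover $L^\infty$ control of $\eta$, the dispersive inequality for both the $L^2$ and the $W^{1,r}$ components, Lemma \ref{lem:imp} for $H$ and $\na H$, and Lemma \ref{lem:compete} to tune $\{\oa_j\},\{v_j\}$ after the fact. Two points need correction, though. First, the claim that $r>d$ forces $d(\frac12-\frac1r)<\frac12$ is false: for $d=3$ and any $r>3$ one has $3(\frac12-\frac1r)>\frac12$. What integrability of the kernel actually requires is $d(\frac12-\frac1r)<1$, i.e.\ $\frac12-\frac1d<\frac1r$, and the compatibility of this with $r>d$ is precisely the constraint $d\le3$; your choice of $r$ close to $d$ rescues the argument, but the stated inequality should be $<1$, not $<\frac12$.

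Second, the step you explicitly defer --- verifying that every product in $\na G$ splits into Lebesgue factors whose exponents land in $\ml C_A$ or $\ml C_B$ --- is where all of the hypotheses are actually consumed, so the proposal stops just short of the crux. Concretely: the term $|\eta|^{\apa_i}|\na\eta|$ forces $\frac12-\frac{\apa_i}{2}\le\frac1r$ (so that the $\eta$-factor can be interpolated from the norm), and this is compatible with $\frac1r<\frac1d$ exactly when $2(\frac12-\frac1d)<\apa_1$, and with $r=\infty$ exactly when $\apa_1\ge1$ --- the latter is where the hypothesis $\apa_1\ge1$ enters the $d=1$ endpoint, not the Sobolev embedding $W^{1,2}\hookrightarrow L^\infty$ you cite. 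Likewise, the term $|\eta|^{\apa_i}|\na T|$ only decays like $e^{-\apa_i(1-\ta+c_1\ta)\la\tau}$ with $\ta$ the Gagliardo--Nirenberg exponent, and requiring this to beat $e^{-c_1\la\tau}$ is what actually pins down the admissible range of $c_1$; the slower rate $e^{-a\min(\apa_1,1)(1-s/r')v_*\tau}$ from Lemma \ref{lem:imp}(H1), which you cite as the reason $c_1$ must be small, imposes no constraint on $c_1$ at all since $v_*$ can be taken arbitrarily large. These checks are routine but they are the substance of the proof, so the proposal as written is a correct outline rather than a complete argument.
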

{\it Remark.}  We need $d \le 3$ so that 
$\| e^{it\Delta}\| _{L^{r'} \to L^r}$ is locally integrable 
in $t$ for some $r>d$.
We need $ \alpha _1 <2 $ so that $\ml C_B$ in \eqref{eq:CACB.def}
is nonempty, and hence $B_p$ can be controlled for $p \in \ml C_B$.

\begin{proof}
For $r>d$, $\la>0$, and $0<c_{1}\le1$, let $X=X_{r,\la,c_{1}}$
be the Banach space of all $\eta:[0,\infty)\times\mb R^{d}\to\mb C$
with norm $\|\eta\|_{X}$ defined by the left-hand side of \eqref{eq:Thm2_norm}.
By the Gagliardo-Nirenberg's inequality, for any $p\in[2,\infty]$,
\begin{equation}
\|\eta(t)\|_{L_{x}^{p}}\le G_{d,p,r}\|\eta(t)\|_{L_{x}^{2}}^{1-\ta}\|\na\eta(t)\|_{L_{x}^{r}}^{\ta}\le G_{d,p,r}\|\eta\|_{X}e^{-(1-\ta+c_{1}\ta)\la t},\label{eq:lbest}
\end{equation}
where $G_{d,p,r}$ is a constant and $\ta=(\fc 12-\fc 1p)(\fc 12+\fc 1d-\fc 1r)^{-1}$.

We will show that $\Phi$ can be a contraction mapping on the closed
unit ball of $X$ (the case of $\rho=1$). Balls with other radius can be 
similarly treated.
Moreover, we'll only give the estimates for $\Phi$ to be a self-mapping.
As in the proof of Theorem \ref{thm:single1}, the derivations of the estimates for contractivity are no harder (and without the $H$ parts).

Given $\eta\in X$ with $\|\eta\|_{X}\le1$. We will first estimate
$\|\Phi\eta(t)\|_{L_{x}^{2}}$, and then $\|\na\Phi\eta(t)\|_{L_{x}^{r}}$.
Finally, $\|\na\Phi\eta(t)\|_{L_{x}^{2}}$ is basically a special
case of $\|\na\Phi\eta(t)\|_{L_{x}^{r}}$.

\medskip

\textbf{Part 1. Estimate of $\|\Phi\eta(t)\|_{L_{x}^{2}}$.} For $G$,
we have
\begin{align}
\|G(\tau)\|_{L_{x}^{2}} & \lesssim\sum_{i=1,2}(\||\eta||T|^{\apa_{i}}(\tau)\|_{L_{x}^{2}}+\||\eta|^{\apa_{i}+1}(\tau)\|_{L_{x}^{2}})\nonumber \\
 & \le\sum_{i=1,2}(\|\eta(\tau)\|_{L_{x}^{2}}\|T\|_{L_{x}^{\infty}}^{\apa_{i}}+\|\eta(\tau)\|_{L_{x}^{2}}\|\eta(\tau)\|_{L_{x}^{\infty}}^{\apa_{i}})\nonumber \\
 & \lesssim\sum_{i=1,2}(A_{\infty}^{\apa_{i}}+G_{d,\infty,r}^{\apa_{i}})e^{-\la\tau},\label{eq:Thm2_G_2}
\end{align}
by \eqref{eq:lbest}. Then consider $H$. Since $d\le3$, we have
$2>\fc{d\apa_{1}}{2(\apa_{1}+1)}$ (for all $\apa_{1}>0$). Fix any
$2>s_{1}>\fc{d\apa_{1}}{2(\apa_{1}+1)}$, we get from Lemma \ref{lem:imp}
(H0) 
\begin{equation}
\|H(\tau)\|_{L_{x}^{2}}\lesssim(\sum_{i=1,2}A_{(\apa_{i}+1)s_{1}}^{\apa_{i}+1})^{s_{1}/2}(\sum_{i=1,2}A_{\infty}^{\apa_{i}+1})^{1-s_{1}/2}e^{-a(1-s_{1}/2)v_{*}\tau},\label{eq:Thm2_H_2}
\end{equation}
with $(\apa_{1}+1)s_{1}\in\ml C_{A}$. Suppose
\[
a(1-s_{1}/2)v_{*}\ge\la.
\]
Then from \eqref{eq:Thm2_G_2} and \eqref{eq:Thm2_H_2}, the dispersive
inequality gives 
\begin{equation}
\|\Phi\eta(t)\|_{L_{x}^{2}}\lesssim\int_{t}^{\infty}E_{1}e^{-\la\tau}\, d\tau=E_{1}\la^{-1}e^{-\la t},\label{eq:Thm2_L2}
\end{equation}
where 
\[
E_{1}=\sum_{i=1,2}(A_{\infty}^{\apa_{i}}+G_{d,\infty,r}^{\apa_{i}})+(\sum_{i=1,2}A_{(\apa_{i}+1)s_{1}}^{\apa_{i}+1})^{s_{1}/2}(\sum_{i=1,2}A_{\infty}^{\apa_{i}+1})^{1-s_{1}/2}.
\]

\textbf{Part 2. Estimate of $\|\na\Phi\eta(t)\|_{L_{x}^{r}}$.} This
part is more delicate. The dispersive inequality gives 
\[
\|\na\Phi\eta(t)\|_{L_{x}^{r}}\lesssim\int_{t}^{\infty}|t-\tau|^{-d(\fc 12-\fc 1r)}(\|\na G(\tau)\|_{L_{x}^{r'}}+\|\na H(\tau)\|_{L_{x}^{r'}})\, d\tau.
\]
To derive a suitable estimate from it, in the following we will get
several conditions on the lower bounds of $1/r$. The one thing to
check is that they are all strictly less than $1/d$, so that there
is really one $r>d$ satisfying all the conditions. Moreover, if $d=1$,
$r$ can be $\infty$. 

\emph{Step 1.} For $|t-\tau|^{-d(\fc 12-\fc 1r)}$ to be integrable
at the singularity $\tau=t$, we need $d(\fc 12-\fc 1r)<1$, i.e.
\[
\fc 12-\fc 1d<\fc 1r.\tag{Condition 1}
\]
Since we want $r>d$, we need the lower bound $\fc 12-\fc 1d$ to be less than $\fc 1d$,
which holds
if and only if $d\le3$.
%\footnote{This is the main reason for $d \le 3$.}
If $d=1$, the lower bound is negative and we can choose $r=\infty$.

\emph{Step 2. Estimate of $G$.} By \eqref{eq:fineq1'},
\begin{align*}
|\na G| & =|\na[f(T+\eta)-f(T)]|\\
 & \lesssim\sum_{i=1,2}\left\{ |\eta|^{\min(\apa_{i},1)}(|T|+|\eta|)^{\max(\apa_{i}-1,0)}|\na T|+(|T|+|\eta|)^{\apa_{i}}|\na\eta|\right\} .
\end{align*}
If $\apa_{i}>1$, we have to estimate the $L_{x}^{r'}$ norm of (1)
$|\eta||T|^{\apa_{i}-1}|\na T|$, (2) $|\eta|^{\apa_{i}}|\na T|$,
(3) $|T|^{\apa_{i}}|\na\eta|$, and (4) $|\eta|^{\apa_{i}}|\na\eta|$.
And if $\apa_{i}\le1$, we only have to estimate (2), (3) and (4).
We discuss them in the following. We remark that the $p,q$ in different
sub-steps are unrelated.

\emph{Step 2-1. Estimate of (1). (Only for $\apa_{i}>1$)} Suppose
\begin{equation}
\fc 1{r'}<\fc 12+\fc{2(\apa_{i}-1)}{d\apa_{1}}+\fc 2d\left(\fc 1{\apa_{1}}-\fc 12\right).\label{eq:preC2}
\end{equation}
Then, since $\fc 12\le\fc 1{r'}$, we have $\fc 1{r'}=\fc 12+\fc 1q+\fc 1p$
for some $p,q\in(0,\infty]$ satisfying $\fc 1q<\fc{2(\apa_{i}-1)}{d\apa_{1}}$
and $\fc 1p<\fc 2d(\fc 1{\apa_{1}}-\fc 12)$. Thus
\begin{align}
\||\eta||T|^{\apa_{i}-1}|\na T|(\tau)\|_{L_{x}^{r'}} & \le\|\eta(\tau)\|_{L_{x}^{2}}\||T|^{\apa_{i}-1}\|_{L_{x}^{q}}\|\na T\|_{L_{x}^{p}}\nonumber \\
 & \lesssim A_{(\apa_{i}-1)q}^{\apa_{i}-1}B_{p}e^{-\la\tau}\nonumber \\
 & \le A_{(\apa_{i}-1)q}^{\apa_{i}-1}B_{p}e^{-c_{1}\la\tau},\label{eq:T2_P2_E1}
\end{align}
with $(\apa_{i}-1)q\in\ml C_{A}$ and $p\in\ml C_{B}$. Notice that
\eqref{eq:preC2} is equivalent to 
\[
\fc 12-\fc 2d\left(\fc{\apa_{i}}{\apa_{1}}-\fc 12\right)<\fc 1r.\tag{Condition 2}
\]
It's easy to check that, since $d\le3$ and $\apa_{i}\ge\apa_{1}$,
the lower bound is strictly less than $1/d$, and is negative if $d=1$.

\emph{Step 2-2. Estimate of (2).} Let $q=\max(r',2/\apa_{1})$, and
$p$ be such that $\fc 1{r'}=\fc 1q+\fc 1p$. By \eqref{eq:lbest}
\begin{align}
\||\eta|^{\apa_{i}}|\na T|(\tau)\|_{L_{x}^{r'}} & \le\|\eta(\tau)\|_{L_{x}^{\apa_{i}q}}^{\apa_{i}}\|\na T\|_{L_{x}^{p}}\nonumber \\
 & \lesssim B_{p}G_{d,\apa_{i}q,r}^{\apa_{i}}e^{-\apa_{i}(1-\ta+c_{1}\ta)\la\tau},\label{eq:T2_P2_E2}
\end{align}
where $\ta=(\fc 12-\fc 1{\apa_{i}q})(\fc 12+\fc 1d-\fc 1r)^{-1}$.
For \eqref{eq:T2_P2_E2} to be an effective estimate, we need (i)
$p\in\ml C_{B}$, and (ii) $\apa_{i}(1-\ta+c_{1}\ta)\ge c_{1}$. 

Since $\fc 1p=\fc 1{r'}-\fc 1q$, (i) holds iff 
\begin{equation}
\fc 1{r'}-\fc 1q<\fc 2d\left(\fc 1{\apa_{1}}-\fc 12\right).\label{eq:preC3}
\end{equation}
There are two cases according to the value of $q$. If $q=r'$ (i.e.
$r'\ge2/\apa_{1}$), then \eqref{eq:preC3} is automatically true
since $\apa_{1}<2$, and no restriction on $r$ is needed. On the
other hand, if $q=2/\apa_{1}$ (i.e. $r'<2/\apa_{1}$), then \eqref{eq:preC3}
gives
\[
1-\left(\fc{\apa_{1}}2+\fc 2{d\apa_{1}}\right)+\fc 1d<\fc 1r.\tag{Condition 3}
\]
Since 
\begin{equation}
\fc{\apa_{1}}2+\fc 2{d\apa_{1}}\ge\fc 2{\sqrt{d}},\label{eq:amgm}
\end{equation}
the lower bound is less than $1/d$ for $d\le3$. Moreover, if $d=1$,
strict inequality holds in \eqref{eq:amgm}, and the lower bound is
negative.

Now consider (ii). Since $r>d$, there exists some $c_{M}=c_{M}(d,\apa_{1},\apa_{2},r)>0$
such that (ii) holds as long as $0<c_{1}\le c_{M}$. For example,
we can use the (rather rough) estimate 
\[
\apa_{i}(1-\ta+c_{1}\ta)\ge\apa_{1}(1-\ta).
\]
Hence (ii) holds if
\begin{equation}
0<c_{1}\le\apa_{1}\left(1-\fc 12\Bigl(\fc 12+\fc 1d-\fc 1r\Bigr)^{-1}\right).\label{eq:c1cond}
\end{equation}

\emph{Step 2-3. Estimate of (3).} We have
\begin{equation}
\||T|^{\apa_{i}}|\na\eta|(\tau)\|_{L_{x}^{r'}}\le\||T|^{\apa_{i}}\|_{L_{x}^{q}}\|\na\eta(\tau)\|_{L_{x}^{2}}\lesssim A_{\apa_{i}q}^{\apa_{i}}e^{-c_{1}\la\tau},\label{eq:T2_P2_E3}
\end{equation}
where $\fc 1{r'}=\fc 1q+\fc 12$. We need $\apa_{i}q\in\ml C_{A}$,
i.e.
\[
\fc 12-\fc{2\apa_{i}}{d\apa_{1}}<\fc 1r.\tag{Condition 4}
\]
The lower bound is less than $1/d$, and is negative if $d=1$. 

\emph{Step 2-4. Estimate of (4).} We have 
\[
\||\eta|^{\apa_{i}}|\na\eta|(\tau)\|_{L_{x}^{r'}}\le\|\eta(\tau)\|_{L_{x}^{\apa_{i}q}}^{\apa_{i}}\|\na\eta(\tau)\|_{L_{x}^{2}},
\]
where $\fc 1{r'}=\fc 1q+\fc 12$. If $\apa_{i}q\ge2$, that is 
\[
\fc 12-\fc{\apa_{i}}2\le\fc 1r,\tag{Condition 5}
\]
then we get from \eqref{eq:lbest}
\begin{equation}
\||\eta|^{\apa_{i}}|\na\eta|(\tau)\|_{L_{x}^{r'}}\le G_{d,\apa_{i}q,r}^{\apa_{i}}e^{-c_{1}\la\tau}.\label{eq:T2_P2_E4}
\end{equation}
The lower bound in (Condition 5) is less than $1/d$ since $2(\fc 12-\fc 1d)<\apa_{1}$
(this is where we need this requirement). Moreover, $r$ can be $\infty$
if $\apa_{1}\ge1$.

\emph{Step 3. Estimate of $H$.} Suppose 
\begin{equation}
\fc 1{r'}<\fc 2d+\fc 2d(\fc 1{\apa_{1}}-\fc 12),\label{eq:preC6}
\end{equation}
equivalently 
\[
1-\fc 2d\left(\fc 1{\apa_{1}}+\fc 12\right)<\fc 1r.\tag{Condition 6}
\]
One can check that the lower bound is less than $1/d$ by $d\le3$
and $\apa_{1}<2$, and is negative if $d=1$. From \eqref{eq:preC6},
by fixing a small enough $\vn>0$, we have 
\[
\fc 1q:=\fc 2d-\vn\ge 0,
\quad\fc 1p:=\fc 2d(\fc 1{\apa_{1}}-\fc 12)-\vn\ge 0,
\]
and 
\[
\fc 1{s_{2}}:=\fc 1p+\fc 1q
=\fc 2d+\fc 2d(\fc 1{\apa_{1}}-\fc 12)-2\vn>\fc 1{r'}.
\]
From Lemma \ref{lem:imp} (H1), we get
\begin{equation}
\|\na H(\tau)\|_{L_{x}^{r'}}\lesssim(\sum_{i=1,2}A_{\apa_{i}q}^{\apa_{i}}B_{p})^{s_{2}/r'}(\sum_{i=1,2}A_{\infty}^{\apa_{i}}B_{\infty})^{1-s_{2}/r'}e^{-a\min(\apa_{1},1)(1-s_{2}/r')v_{*}\tau}.\label{eq:T2_P2_EH}
\end{equation}
Since $\fc 1q<\fc 2d$ and $\fc 1p<\fc 2d(\fc 1{\apa_{1}}-\fc 12)$,
we have $\apa_{1}q\in\ml C_{A}$ and $p\in\ml C_{B}$. 

From the above discussions, we get the following conclusion: Suppose
$r>d$ is sufficiently close to $d$, $c_{1}$ satisfies \eqref{eq:c1cond},
and $v_{*}$ satisfies 
\begin{equation}
a\min(\apa_{1},1)(1-s_{2}/r')v_{*}\ge c_{1}\la.\label{eq:Thm2_vstar2}
\end{equation}
Then we have 
\begin{align}
\|\na\Phi\eta(t)\|_{L_{x}^{r}} & \lesssim\int_{t}^{\infty}|t-\tau|^{-d(\fc 12-\fc 1r)}E_{2}e^{-c_{1}\la\tau}\, d\tau\nonumber \\
 & =E_{2}\Ga(1-d(\fc 12-\fc 1r))(c_{1}\la)^{d(\fc 12-\fc 1r)-1}e^{-c_{1}\la t},\label{eq:Thm2_gLr}
\end{align}
where $E_2$ is obtained by collecting the coefficients
in \eqref{eq:T2_P2_E1}, \eqref{eq:T2_P2_E2}, 
\eqref{eq:T2_P2_E3}, \eqref{eq:T2_P2_E4}, and \eqref{eq:T2_P2_EH}.
%It's
%obvious that $E_{2}$ can be controlled within some fixed finite value
%if we choose $\la$ (and hence $v_{*}$) as large as we like. 

\medskip

\textbf{Part 3. Estimate of $\|\na\Phi\eta(t)\|_{L_{x}^{2}}$.} We have 
\[
\|\na\Phi\eta(t)\|_{L_{x}^{2}}\lesssim\int_{t}^{\infty}(\|\na G(\tau)\|_{L_{x}^{2}}+\|\na H(\tau)\|_{L_{x}^{2}})\, d\tau.
\]
We can imitate Part 2 to obtain all the needed estimates. We summarize
them below.
\begin{enumerate}
\item There is no need of Step 1.
\item For the four sub-steps in Step 2, simply replace ``$r$'' by ``$2$''
(except those of $G_{d,p,r}$ and $\ta$ in using \eqref{eq:lbest}),
we have the following results:

\begin{itemize}
\item[(2-1)]  $\||\eta||T|^{\apa_{i}-1}|\na T|(\tau)\|_{L_{x}^{2}}\lesssim A_{\infty}^{\apa_{i}-1}B_{\infty}e^{-c_{1}\la\tau}$.
\item[(2-2)]  $\||\eta|^{\apa_{i}}|\na T|(\tau)\|_{L_{x}^{2}}\lesssim B_{p}G_{d,\apa_{i}q,r}^{\apa_{i}}e^{-\apa_{i}(1-\ta+c_{1}\ta)\la\tau}$,
where $q=\max(2,2/\apa_{1})$, $p$ is such that $\fc 12=\fc 1q+\fc 1p$,
and $\ta=(\fc 12-\fc 1{\apa_{i}q})(\fc 12+\fc 1d-\fc 1r)^{-1}$. It's
easy to check that $p\in\ml C_{B}$, and $\apa_{i}(1-\ta+c_{1}\ta)\ge c_{1}$
as long as \eqref{eq:c1cond} holds.
\item[(2-3)]  $\||T|^{\apa_{i}}|\na\eta|(\tau)\|_{L_{x}^{2}}\lesssim A_{\infty}^{\apa_{i}}e^{-c_{1}\la\tau}$.
\item[(2-4)]  $\||\eta|^{\apa_{i}}|\na\eta|(\tau)\|_{L_{x}^{2}}\le G_{d,\infty,r}^{\apa_{i}}e^{-c_{1}\la\tau}$. 
\end{itemize}
\item The conclusion of Step 3 is valid with $r$ replaced by $2$. Precisely, we have
\begin{equation}\label{eq:T2_P3_EH}
\|\na H(t)\|_{L_{x}^{2}}\lesssim(\sum_{i=1,2}A_{\apa_{i}q}^{\apa_{i}}B_{p})^{s_{2}/2}(\sum_{i=1,2}A_{\infty}^{\apa_{i}}B_{\infty})^{s_{2}/2}e^{-a\min(\apa_{1},1)(1-s_{2}/2)v_{*}t},
\end{equation}
where $s_{2},p,q$ can be the same as given there.
\end{enumerate}
Thus, if \eqref{eq:c1cond} and \eqref{eq:Thm2_vstar2} hold, we have
\begin{equation}
\|\na\Phi\eta(t)\|_{L_{x}^{2}}\lesssim\int_{t}^{\infty}E_{3}e^{-c_{1}\la\tau}\, d\tau=E_{3}(c_{1}\la)^{-1}e^{-c_{1}\la t},\label{eq:Thm2_gL2}
\end{equation}
where $E_{3}$ is obtained by collecting the coefficients in 
(2-1) -- (2-4) and \eqref{eq:T2_P3_EH}.

The conclusions of the three Parts (namely \eqref{eq:Thm2_L2},
 \eqref{eq:Thm2_gLr} and \eqref{eq:Thm2_gL2}) provide the needed 
estimates for $\Phi$ to be a self-mapping. Similarly we can derive 
the estimates for $\Phi$ to be contractive, and the theorem is true 
by Lemma \ref{lem:compete}.\end{proof}
\begin{rem}
\label{rem:needofg2} Our assertion will be weaker without 
considering the $\|\na\eta(t)\|_{L_{x}^{2}}$ control. 
Precisely, without it, due to the necessary modification
of \emph{Step 2-4} in Part 2, (Condition 5) becomes $\fc 12-\fc{\apa_{i}}4\le\fc 1r$.
Thus we need $4(\fc 12-\fc 1d)<\apa_{1}$ 
(for $\fc 1r<\fc 1d$ to be possible).
Also, since $\apa_{1}<2$, $r=\infty$ is not allowed.
\end{rem}

By Theorem \ref{thm:single1},
Theorem \ref{thm:single2}, and the Gagliardo-Nirenberg's 
inequality \eqref{eq:lbest},
we have proved the following

\begin{cor}\label{cor:single-sum}
Let $d\le 3$, and $f$ satisfy Assumptions \ref{assuF} and \ref{assuT}. 
Assume moreover either of the following conditions:
\begin{itemize}
\item[(i)]
$0<\apa_1\le\apa_2<\infty$ if $d=1$.
\item[(ii)]
$2(\fc 12-\fc 1d)<\apa_{1}<2$ if $d=2,3$.
\end{itemize}
Then for any finite $\rho>0$, there exists a 
constant $\la_{0}>0$ such that the following holds:
For $\la_{0}\le\la<\infty$, there
exist solutions of \eqref{eq:NLSE} of the form 
\eqref{eq:T-def}, with
\begin{equation*}
\sup_{t\ge0}e^{\la t}\|\eta(t)\|_{L_{x}^{2}\cap L_{x}^{\infty}}\le\rho.
\end{equation*}
\end{cor}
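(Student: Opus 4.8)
The plan is to obtain Corollary~\ref{cor:single-sum} by simply invoking Theorem~\ref{thm:single1}, Theorem~\ref{thm:single2}, and the Gagliardo--Nirenberg inequality \eqref{eq:lbest}, splitting into cases according to whether $d=1$ with $\apa_1\ge 1$ (handled by Theorem~\ref{thm:single1}) or one of the remaining configurations (handled by Theorem~\ref{thm:single2}). The only point requiring a little care is that Theorem~\ref{thm:single2} produces the $L_x^\infty$ control of $\eta$ only at a reduced exponential rate, which must be promoted back to the full rate $\la$ by a harmless rescaling of $\la$.

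\emph{When $d=1$ and $\apa_1\ge 1$} (which covers all of $\apa_1\in[1,\infty)$, with no upper restriction), Theorem~\ref{thm:single1} applies directly: for any finite $\rho>0$ it gives $\la_0>0$ such that for $\la\ge\la_0$ there is a solution of the form \eqref{eq:T-def} with $\sup_{t\ge0}e^{\la t}\|\eta(t)\|_{L_x^2\cap L_x^\infty}\le\rho$. This is exactly the asserted conclusion, so nothing further is needed.

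\emph{In the remaining cases} --- namely $d=1$ with $0<\apa_1<1$, and $d=2,3$ with $2(\fc12-\fc1d)<\apa_1<2$ --- the hypotheses of Theorem~\ref{thm:single2} hold (for $d=1,2$ the lower bound $2(\fc12-\fc1d)\le 0$ is vacuous, and $\apa_1<2$; for $d=3$ it is precisely condition (ii)). Given $\rho>0$, Theorem~\ref{thm:single2} yields $r>d$, $0<c_1\le 1$ and $\mu_0>0$ such that for every $\mu\ge\mu_0$ there is a solution $T+\eta$ with $\sup_{t\ge0}\{e^{\mu t}\|\eta(t)\|_{L_x^2}+e^{c_1\mu t}\|\na\eta(t)\|_{L_x^2\cap L_x^r}\}\le\rho'$ for any preassigned finite $\rho'>0$. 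Applying \eqref{eq:lbest} with $p=\infty$ gives $\|\eta(t)\|_{L_x^\infty}\le G_{d,\infty,r}\,\rho'\,e^{-\ka\mu t}$, where $\ka:=1-\ta+c_1\ta\in(0,1]$ and $\ta=\fc12(\fc12+\fc1d-\fc1r)^{-1}\in(0,1)$ depend only on $d,\apa_1,\apa_2$. I would then take $\rho'$ small enough that $G_{d,\infty,r}\rho'\le\rho$ and $\rho'\le\rho$, set $\mu:=\la/\ka\ (\ge\la)$, and put $\la_0:=\ka\mu_0$; for $\la\ge\la_0$ this produces $\|\eta(t)\|_{L_x^2}\le\rho' e^{-\mu t}\le\rho e^{-\la t}$ and $\|\eta(t)\|_{L_x^\infty}\le G_{d,\infty,r}\rho' e^{-\ka\mu t}\le\rho e^{-\la t}$, i.e. $\sup_{t\ge0}e^{\la t}\|\eta(t)\|_{L_x^2\cap L_x^\infty}\le\rho$, which is the claim.

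\emph{Main obstacle.} There is essentially none: the corollary is a bookkeeping consequence of the two theorems. The single subtlety is the mismatch of rates --- Gagliardo--Nirenberg converts the rate $c_1\mu$ of $\|\na\eta(t)\|_{L_x^r}$ into the smaller rate $\ka\mu<\mu$ for $\|\eta(t)\|_{L_x^\infty}$ --- which forces one to start Theorem~\ref{thm:single2} at the larger rate $\mu=\la/\ka$; this is also the reason the statement is phrased for all sufficiently large $\la$ rather than for a single fixed $\la$.
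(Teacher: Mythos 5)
Your proposal is correct and follows exactly the route the paper takes: the corollary is stated there as an immediate consequence of Theorems \ref{thm:single1} and \ref{thm:single2} together with the Gagliardo--Nirenberg inequality \eqref{eq:lbest}, with the same case split ($d=1$, $\apa_1\ge1$ via Theorem \ref{thm:single1}; all remaining cases via Theorem \ref{thm:single2}). Your explicit handling of the reduced decay rate $\ka\la$ coming from \eqref{eq:lbest}, fixed by running Theorem \ref{thm:single2} at the rate $\mu=\la/\ka$ and shrinking the radius, is a detail the paper leaves implicit, and it is carried out correctly.
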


\section{Mixed dimensional trains \label{sec:MixedT}}

In this section we consider 
mixed trains. It would be good for the reader
to recall the discussion in Section \ref{sub:mixed}.

First we point out a new problem not mentioned in 
Section \ref{sub:mixed}: 
We can't use only the dispersive inequality \eqref{eq:dispersive} 
to construct mixed trains like we did in the previous section.
To explain the problem, we take the 1D-2D
train $T_{1}+\eta_{1}+T_{2}+\eta$ for example. Corresponding to 
this train, we have 
\[
G=f(T_{1}+\eta_{1}+T_{2}+\eta)-f(T_{1}+\eta_{1}+T_{2}).
\]
Suppose we try to find $\eta$ in a Banach space $X$ whose norm 
assumes the exponential decay of $\|\eta(t)\|_{L_{x}^{p}}$
(with possibly several $p$).
Then we have to estimate $\|\Phi\eta(t)\|_{L_{x}^{p}}$. 
To use the dispersive inequality, we can only consider $p\ge 2$. 
Then we have to estimate $\|G(\tau)\|_{L_{x}^{p'}}$, from which 
we will encounter 
(a) $\||\eta|(|T_{1}|+|\eta_{1}|)^{\apa_{i}}(\tau)\|_{L_{x}^{p'}}$
and (b) $\||\eta|^{\apa_{i}+1}(\tau)\|_{L_{x}^{p'}}$ 
(and also $\||\eta||T_{2}|^{\apa_{i}}(\tau)\|_{L_{x}^{p'}}$,
which is not relevant to the problem). For (a), since the 1D objects
only have $L^{\infty}$ bounds in $x_{2}$, not $L^q_{x_2}$ for 
$q<\infty$, we can only estimate as follows:
\[
\||\eta|(|T_{1}|+|\eta_{1}|)^{\apa_{i}}(\tau)\|_{L_{x}^{p'}}\le\|\eta(\tau)\|_{L_{x}^{p'}}\|(|T_{1}|+|\eta_{1}|)(\tau)\|_{L_{x}^{\infty}}^{\apa_{i}}.
\]
Thus we have to also assume the exponential decay of 
$\|\eta(t)\|_{L_{x}^{p'}}$ for the norm of $X$, and hence have to 
estimate $\|\Phi\eta(t)\|_{L_{x}^{p'}}$.
Again, this can be done only if $p'\ge 2$, and hence we must have
$p=p'=2$. Nevertheless, (b) then requires us to estimate 
$\|\eta(\tau)\|_{L_{x}^{2(\apa_{i}+1)}}$, and the construction fails. 
We remark that adding some $\|\na\eta(t)\|_{L^p_x}$ controls in 
the definition of $X$ also results in similar problems.

Due to the above observation, we shall use the Strichartz estimate
to accomplish our task. In the following section, we recall the basic definitions and facts about the Strichartz space, and then give some more specialized inequalities to be used. 

\subsection{Strichartz space \label{sub:STR}}

Let $\ml A=\ml A^{(d)}$ be the set of all pairs $(q,r)$ satisfying
$\fc 2q+\fc dr=\fc d2$, with $2\le r\le r_{\max}$ or equivalently
$q_{\min}\le q\le\infty$, where
\begin{equation}
r_{\max}=r_{\max}^{(d)}=\left\{ \begin{array}{ll}
\infty & \mbox{if }d=1\\
4 & \mbox{if }d=2\\
\fc{2d}{d-2} & \mbox{if }d\ge3,
\end{array}\right.\quad\mbox{and}\quad q_{\min}=q_{\min}^{(d)}=\left\{ \begin{array}{ll}
4 & \mbox{if }d=1\\
4 & \mbox{if }d=2\\
2 & \mbox{if }d\ge3.
\end{array}\right.\label{eq:rMqm}
\end{equation}
Thus $\ml A$ is the set of all (Schr\"{o}dinger) admissible
pairs if $d\ne2$. For $d=2$, we take $r_{\max}<\infty $ to avoid the forbidden endpoint, and 
$r_{\max}$ can actually be any finite
number no less than $4$ for our approach. We set it to be $4$ for preciseness.

For $\tau\ge0$, we abbreviate $L^{q}([\tau,\infty),L^{r}(\mb R^{d}))$
as $L_{t}^{q}L_{x}^{r}(\tau)$, or even $L_{t}^{q}L_{x}^{r}$ when
the time interval is clear. We'll abuse notation and write
$L_{t}^{q}L_{x}^{r}(t)$, where the two ``$t$'' should not cause
confusion. Define the Strichartz space 
\[
S(t)\coloneqq L_{t}^{\infty}L_{x}^{2}(t)\cap L_{t}^{q_{\min}}L_{x}^{r_{\max}}(t),
\]
with norm
\[
\|\cdot\|_{S(t)}\coloneqq\max(\|\cdot\|_{L_{t}^{\infty}L_{x}^{2}(t)},\|\cdot\|_{L_{t}^{q_{\min}}L_{x}^{r_{\max}}(t)}).
\]
By interpolation, 
\[
S(t)=\bigcap_{(q,r)\in\ml A}L_{t}^{q}L_{x}^{r}(t),\quad\mbox{and}\quad\|\cdot\|_{S(t)}=\sup_{(q,r)\in\ml A}\|\cdot\|_{L_{t}^{q}L_{x}^{r}(t)}.
\]
Denote the dual space of $S(t)$ by $N(t)$. For $(q,r)\in\ml A$,
a function $\xi\in L_{t}^{q'}L_{x}^{r'}(t)$ is regarded as an element
in $N(t)$ by letting
\[
\langle\xi,\eta\rangle_{N(t),S(t)}\coloneqq\int_{t}^{\infty}\int_{\mb R^{d}}\xi(s,x)\eta(s,x)\, dxds\quad(\mbox{for }\eta\in S(t)).
\]
In this way, we have $|\langle\xi,\eta\rangle_{N(t),S(t)}|\le\|\xi\|_{L_{t}^{q'}L_{x}^{r'}(t)}\|\eta\|_{S(t)}$,
and hence 
\[
\|\xi\|_{N(t)}\le\|\xi\|_{L_{t}^{q'}L_{x}^{r'}(t)}.
\]

For $\la>0$ and $t_{0}\ge0$, we define $S_{\la,t_{0}}$ to be the
class of all $\eta\in S(t_{0})$ such that
\[
\|\eta\|_{S_{\la,t_{0}}}\coloneqq\sup_{t\ge t_{0}}e^{\la t}\|\eta\|_{S(t)}<\infty.
\]
By definition, $\|\eta\|_{S(t)}\le\|\eta\|_{S_{\la,t_{0}}}e^{-\la t}$
for $t\ge t_{0}$. In particular, since $\|\eta\|_{L_{t}^{\infty}L_{x}^{2}(t)}\le\|\eta\|_{S(t)}$,
we have 
\begin{equation}
\|\eta(t)\|_{L_{x}^{2}}\le\|\eta\|_{S_{\la,t_{0}}}e^{-\la t}\quad\mbox{for (almost all)}\quad t\ge t_{0}.\label{eq:StrL2}
\end{equation}

In the rest of this section we prove some useful inequalities, 
particularly Lemma \ref{lem:main}. First, we give a fact arising
from a proof step of \cite[Proposition 2.4]{LeCoz_Li_Tsai}. It might
be of independent interest.
\begin{prop}
\label{prop:subadest} Given $0<p\le q\le\infty$ and $\la>0$. If
$u:[0,\infty)\to[0,\infty]$ satisfies $\|u\|_{L^{q}([t,\infty))}\le e^{-\la t}$
for all $t\ge0$, then
\begin{equation}
\|u\|_{L^{p}([t,\infty))}\le\wt C\la^{\fc 1q-\fc 1p}e^{-\la t}\quad\forall\, t\ge0,\label{eq:claim}
\end{equation}
where we can choose $\wt C=\wt C(p)$ in such a way that $\wt C\le(1-e^{-1})^{-1}$
for $p\ge1$. \end{prop}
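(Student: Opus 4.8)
The plan is to chop the half-line $[t,\infty)$ into consecutive intervals of length $1/\la$, on each of which the hypothesis directly controls the $L^q$ mass, and to convert $L^q$ into $L^p$ locally via H\"older's inequality on a finite measure space, then sum a geometric series.

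Concretely, for a fixed $t\ge 0$ I would set $I_k=[t+k/\la,\,t+(k+1)/\la)$ for $k=0,1,2,\dots$, so that $[t,\infty)=\bigcup_{k\ge 0}I_k$ is a disjoint union with $|I_k|=1/\la$. Since $0<p\le q\le\infty$, H\"older (equivalently Jensen) on $I_k$ gives $\|u\|_{L^p(I_k)}\le|I_k|^{\fc1p-\fc1q}\|u\|_{L^q(I_k)}=\la^{\fc1q-\fc1p}\|u\|_{L^q(I_k)}$; here, for $q=\infty$, one reads $\la^{1/q}=\la^0=1$ and uses the essential supremum in place of the $L^q$ integral. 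Because $I_k\subseteq[t+k/\la,\infty)$ one has $\|u\|_{L^q(I_k)}\le\|u\|_{L^q([t+k/\la,\infty))}\le e^{-\la(t+k/\la)}=e^{-\la t}e^{-k}$, the last step being the hypothesis evaluated at the nonnegative point $t+k/\la$. Hence $\|u\|_{L^p(I_k)}\le\la^{\fc1q-\fc1p}e^{-\la t}e^{-k}$.

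Assembling the pieces, additivity of $\int|u|^p$ over the $I_k$ yields $\|u\|_{L^p([t,\infty))}^p=\sum_{k\ge 0}\|u\|_{L^p(I_k)}^p$ (this identity, and the local H\"older step above, hold for all $0<p\le q$ and do not require $p\ge 1$), so
\[
\|u\|_{L^p([t,\infty))}^p\le\la^{\fc pq-1}e^{-p\la t}\sum_{k\ge 0}e^{-pk}=\fc{\la^{\fc pq-1}e^{-p\la t}}{1-e^{-p}}.
\]
Taking $p$-th roots gives \eqref{eq:claim} with $\wt C(p)=(1-e^{-p})^{-1/p}$, which indeed depends only on $p$. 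Finally, for $p\ge 1$ one has $1-e^{-p}\ge 1-e^{-1}$, hence $(1-e^{-p})^{-1}\le(1-e^{-1})^{-1}$; and since $(1-e^{-1})^{-1}>1$ and $0<1/p\le 1$, $\wt C(p)=\big((1-e^{-p})^{-1}\big)^{1/p}\le\big((1-e^{-1})^{-1}\big)^{1/p}\le(1-e^{-1})^{-1}$, as claimed.

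As for difficulty, there is essentially no obstacle: the only points needing a little care are checking that the dyadic-splitting-plus-H\"older argument survives the quasi-Banach range $0<p<1$ (it does, because only additivity of $\int|u|^p$ and H\"older on a finite measure space are used, neither requiring the triangle inequality for $\|\cdot\|_{L^p}$), keeping the constant in the clean form $(1-e^{-p})^{-1/p}$ so the stated $p\ge 1$ bound is transparent, and treating $q=\infty$ as a degenerate special case of the same estimate.
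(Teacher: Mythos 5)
Your proof is correct and follows essentially the same route as the paper: partition $[t,\infty)$ into intervals of length $1/\la$, apply H\"older's inequality on each, bound the local $L^q$ mass by the hypothesis at the left endpoint, and sum the resulting geometric series, arriving at the same constant $\wt C(p)=(1-e^{-p})^{-1/p}$. The only difference is cosmetic: the paper treats $p=q$ and $q=\infty$ as separate (easier) cases with slightly better constants, whereas you absorb them into the single unified estimate, which still satisfies the stated bound $\wt C\le(1-e^{-1})^{-1}$ for $p\ge1$.
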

\begin{proof}
We consider three cases separately. 
\begin{enumerate}
\item If $p=q$, \eqref{eq:claim} is trivially true with $\wt C=1$.
\item If $p<q=\infty$, we have $u(t)\le e^{-\la t}$ for a.a. $t\in[0,\infty)$,
and hence 
\[
\|u\|_{L^{p}([t,\infty))}^{p}\le\int_{t}^{\infty}e^{-p\la\tau}\, d\tau=(p\la)^{-1}e^{-p\la t}.
\]
So \eqref{eq:claim} is true with $\wt C=p^{-1/p}$.
\item Suppose $p<q<\infty$. For fixed $t\ge0$, let $\{t_{k}\}_{k=0}^{\infty}$
be a sequence satisfying $t_{0}=t$ and $t_{k}\nearrow\infty$. Then
\begin{align*}
\|u\|_{L^{p}([t,\infty))}^{p} & =\sum_{k=0}^{\infty}\int_{t_{k}}^{t_{k+1}}u(\tau)^{p}\, d\tau\\
 & \le\sum_{k=0}^{\infty}\Bigl(\int_{t_{k}}^{t_{k+1}}u(\tau)^{q}\, d\tau\Bigr)^{p/q}(t_{k+1}-t_{k})^{1-p/q}\\
 & \le\sum_{k=0}^{\infty}\|u\|_{L^{q}([t_{k},\infty))}^{p}(t_{k+1}-t_{k})^{1-p/q}\\
 & \le\sum_{k=0}^{\infty}e^{-p\la t_{k}}(t_{k+1}-t_{k})^{1-p/q}.
\end{align*}
Letting $t_{k}=t+\fc k{\la}$, we get 
\[
\|u\|_{L^{p}([t,\infty))}^{p}\le(1-e^{-p})^{-1}\la^{p/q-1}e^{-p\la t}.
\]
Thus \eqref{eq:claim} is true with\textit{\textcolor{red}{{} }}$\wt C=(1-e^{-p})^{-1/p}$. 
\end{enumerate}
Comparing the three cases, we see $\wt C\le(1-e^{-1})^{-1}$ for $p\ge1$.
\end{proof}

%\begin{rem}
%We guess the best constant is $\wt C=q^{1/q}p^{-1/p}$, which
%is achieved by $u(t)=(q\la)^{1/q}e^{-\la t}$. 
%\end{rem}

\begin{defn}\label{def:subad}
If $(q,r)\in\ml A$ and $0<p\le q$, we call $(p,r)$ \textit{sub-admissible.}
Thus $(p,r)$ is sub-admissible if and only if $2\le r\le r_{\max}$,
$p>0$, and $\fc 2p+\fc dr\ge\fc d2.$
\end{defn}

\begin{cor}
\label{cor:subadest} Let $\eta\in S_{\la,t_{0}}$. If $(q,r)\in\ml A$
and $(p,r)$ is sub-admissible, then
\[
\|\eta\|_{L_{t}^{p}L_{x}^{r}(t)}\lesssim\la^{\fc 1q-\fc 1p}\|\eta\|_{S_{\la,t_{0}}}e^{-\la t}=\la^{\fc 12(\fc d2-\fc dr-\fc 2p)}\|\eta\|_{S_{\la,t_{0}}}e^{-\la t}\quad(t\ge t_{0}).
\]
\end{cor}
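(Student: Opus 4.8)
The plan is to reduce the statement to the scalar inequality of Proposition \ref{prop:subadest}, applied to the function $s\mapsto\|\eta(s)\|_{L_x^r}$, after a harmless translation so that the time interval starts at the origin.

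First I would assemble the two ingredients. From the definitions, if $(p,r)$ is sub-admissible and $(q,r)\in\ml A$ then $\fc 2p+\fc dr\ge\fc d2=\fc 2q+\fc dr$, so $0<p\le q$ and Proposition \ref{prop:subadest} is applicable with this pair. Also, since $\|\cdot\|_{S(t)}=\sup_{(q,r)\in\ml A}\|\cdot\|_{L_t^qL_x^r(t)}$ and $\|\eta\|_{S(t)}\le\|\eta\|_{S_{\la,t_0}}e^{-\la t}$ for $t\ge t_0$, we have
\[
\|\eta\|_{L_t^qL_x^r(t)}\le\|\eta\|_{S_{\la,t_0}}e^{-\la t}\qquad(t\ge t_0).
\]
Next, set $M=\|\eta\|_{S_{\la,t_0}}e^{-\la t_0}$ (the case $M=0$ being trivial) and define $u(s)=\|\eta(s+t_0)\|_{L_x^r}/M$ for $s\ge0$. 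Replacing $t$ by $s+t_0$ in the displayed bound shows $\|u\|_{L^q([s,\infty))}\le e^{-\la s}$ for all $s\ge0$, which is exactly the hypothesis of Proposition \ref{prop:subadest}. That proposition then yields $\|u\|_{L^p([s,\infty))}\le\wt C\la^{\fc 1q-\fc 1p}e^{-\la s}$ for all $s\ge0$, with $\wt C=\wt C(p)$ universal (and $\le(1-e^{-1})^{-1}$ if $p\ge1$).

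Finally I would undo the normalisation and the shift: writing $t=s+t_0\ge t_0$,
\[
\|\eta\|_{L_t^pL_x^r(t)}=M\,\|u\|_{L^p([t-t_0,\infty))}\le\wt C\,\la^{\fc 1q-\fc 1p}\,\|\eta\|_{S_{\la,t_0}}e^{-\la t_0}e^{-\la(t-t_0)}=\wt C\,\la^{\fc 1q-\fc 1p}\,\|\eta\|_{S_{\la,t_0}}e^{-\la t},
\]
which is the claimed estimate. The alternative form of the exponent is just bookkeeping: $(q,r)\in\ml A$ gives $\fc 1q=\fc 12(\fc d2-\fc dr)$, hence $\fc 1q-\fc 1p=\fc 12(\fc d2-\fc dr-\fc 2p)$. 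There is no real obstacle in this argument; the one point to be careful about is that Proposition \ref{prop:subadest} is stated with left endpoint $0$ whereas $\eta$ is only defined on $[t_0,\infty)$ — extending $\eta$ by zero on $[0,t_0)$ would break the required decay for $t<t_0$, so one must translate rather than extend.
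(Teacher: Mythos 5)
Your argument is correct and is essentially identical to the paper's proof: both normalise by $\|\eta\|_{S_{\la,t_0}}e^{-\la t_0}$, translate time by $t_0$, and apply Proposition \ref{prop:subadest} to $u(s)=e^{\la t_0}\|\eta(s+t_0)\|_{L_x^r}/\|\eta\|_{S_{\la,t_0}}$. The extra checks you include (that $p\le q$ follows from the definitions, and the exponent identity from $\fc 2q+\fc dr=\fc d2$) are routine and consistent with what the paper leaves implicit.
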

\begin{proof}
The case of $\eta=0$ is trivial. Assume $\eta\ne0$. Define $u:[0,\infty)\to[0,\infty]$
by 
\[
u(t)=\fc{e^{\la t_{0}}}{\|\eta\|_{S_{\la,t_{0}}}}\|\eta(t+t_{0})\|_{L_{x}^{r}},
\]
then $\|u\|_{L^{q}([t,\infty))}\le e^{-\la t}$ for $t\ge0$. By Proposition
\ref{prop:subadest}, $\|u\|_{L^{p}([t,\infty))}$ satisfies \eqref{eq:claim},
which gives what we want to show. \end{proof}

Definition \ref{def:subad} and Corollary \ref{cor:subadest} are only used in the next result.

\begin{lem}
\label{lem:main}~We have the following estimates.
\begin{itemize}
\item [\textup{(N0)}] Suppose $0\le m\le4/d$. For $u,v\in S_{\la,t_{0}}$,
\[
\||u||v|^{m}\|_{N(t)}\lesssim\la^{-1+dm/4}\|u\|_{S_{\la,t_{0}}}\|v\|_{S_{\la,t_{0}}}^{m}e^{-(m+1)\la t}\quad\forall\, t\ge t_{0}.
\]

\item [\textup{(N1)}] Suppose $0\le m<\apa_{\max}$.
For $u,v\in S_{\la,t_{0}}$ such that $|\na v|\in S_{\la,t_{0}}$,
\[
\||u||v|^{m}\|_{N(t)}\lesssim_{d,m}\la^{-\mu}\|u\|_{S_{\la,t_{0}}}\|v\|_{S_{\la,t_{0}}}^{m(1-b)}\|\na v\|_{S_{\la,t_{0}}}^{mb}e^{-(m+1)\la t}\quad\forall\, t\ge t_{0},
\]
for some $b=b(d,m)\in[0,1]$ and $\mu=\mu(d,m)>0$ (given explicitly
in the proof).
\end{itemize}
\end{lem}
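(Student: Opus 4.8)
The plan is to estimate the $N(t)$-norm of $|u||v|^m$ by choosing a single sub-admissible pair $(p,r)$ such that the product $|u||v|^m$ lands in $L_t^{p'}L_x^{r'}(t)$, since $\|\xi\|_{N(t)}\le\|\xi\|_{L_t^{p'}L_x^{r'}(t)}$ for any admissible $(p',r')$ — here I will actually want the dual exponents to be sub-admissible in the sense that $(p',r')$ is such that $(q,r')\in\ml A$ with $q\ge p'$, so that Corollary \ref{cor:subadest} applies to each factor. For (N0), I would split the $L_x^{r'}$-norm by H\"older into one factor $\|u(s)\|_{L_x^{\rho_0}}$ and $m$ factors $\|v(s)\|_{L_x^{\rho_1}}$, with $\frac{1}{r'}=\frac{1}{\rho_0}+\frac{m}{\rho_1}$, choosing $\rho_0=\rho_1=2(m+1)$ (the natural symmetric choice), so that $r'=\frac{2(m+1)}{m+1}$... more carefully: pick $r$ with $\frac{1}{r'}=\frac{m+1}{\rho}$, i.e. $\rho=(m+1)r'$, and then take the exponent $r$ on the Strichartz side to be $\rho$. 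Then do H\"older in time: $\|\,|u||v|^m\,\|_{L_t^{p'}L_x^{r'}}\le \prod$ of $L_t$-norms of the $L_x^\rho$-norms of $u$ and $v$, with time exponents summing appropriately. Each of the $m+1$ time-space norms is controlled by Corollary \ref{cor:subadest}, producing a factor $\la^{\frac1q-\frac1{p_j}}$ and $e^{-\la t}$; multiplying the $m+1$ copies gives $e^{-(m+1)\la t}$ and a total power of $\la$ which, after bookkeeping with the constraint $\frac2{p'}+\frac d{r'}=\frac d2$ for $(p',r')\in\ml A$, should come out to $-1+dm/4$. The restriction $0\le m\le 4/d$ is exactly what is needed for the chosen $r=\rho=(m+1)r'$ to satisfy $2\le r\le r_{\max}$ (equivalently for the relevant pair to be admissible), and it is also what keeps the power of $\la$ nonpositive in the way the statement records; I would verify the endpoint cases $m=0$ and $m=4/d$ separately to make sure no forbidden endpoint (the $d=2$, $r=\infty$ case) is hit, which is why $r_{\max}$ was taken finite for $d=2$.

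For (N1) the idea is the same but now $m$ can be as large as $\apa_{\max}$, so the symmetric choice of Lebesgue exponents for the $v$-factors may force $\rho_1$ above $r_{\max}$; to compensate I would interpolate the high $L_x^{\rho_1}$-norm of $v$ against $\|\na v\|$ via Gagliardo–Nirenberg, writing $\|v(s)\|_{L_x^{\rho_1}}\lesssim \|v(s)\|_{L_x^{2}}^{1-\beta}\|\na v(s)\|_{L_x^{2}}^{\beta}$ (or against some $L_x^{r_0}$ with $(q_0,r_0)\in\ml A$ for $\na v$), with $\beta$ chosen by scaling. This is where the exponents $b=b(d,m)$ and $\mu=\mu(d,m)$ come from: $b$ is essentially the Gagliardo–Nirenberg interpolation weight $\beta$ (times whatever fraction of the $m$ copies of $v$ need the gradient), and $\mu$ is the resulting accumulated power of $\la$ after applying Corollary \ref{cor:subadest} to each of the (now possibly distinct) time–space norms of $u$, of $v$, and of $\na v$. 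I would then just read off $b$ and $\mu$ from the exponent arithmetic and record them explicitly, as the statement promises.

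The main obstacle is the bookkeeping of exponents: one must pick $r$ (hence $r'$), then $p'$ (hence the time exponent $q$ on the admissible side via $\frac2q+\frac d{r'}=\frac d2$), then the H\"older split of both the spatial norm ($\frac1{r'}=\sum\frac1{\rho_j}$) and the temporal norm ($\frac1{p'}=\sum\frac1{\sigma_j}$) into $m+1$ pieces, and finally the admissibility of each individual $(\sigma_j,\rho_j)$-type pair so that Corollary \ref{cor:subadest} applies — all while ensuring the $v$-pieces stay $\le r_{\max}$ (for (N0)) or absorbing the excess into $\na v$ (for (N1)). Checking that these constraints are simultaneously solvable precisely when $m\le 4/d$ (resp. $m<\apa_{\max}$), and that the powers of $\la$ add up to exactly $-1+dm/4$ (resp. the claimed $-\mu$), is the real content; everything else is routine Strichartz/H\"older manipulation. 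I would organize the proof by first doing (N0) in full to fix the method, then indicate the modifications for (N1), being careful to track where the Gagliardo–Nirenberg weight enters.
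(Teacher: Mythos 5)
Your proposal is correct and follows essentially the same route as the paper: bound the $N(t)$ norm by a dual Strichartz norm $L_t^{q'}L_x^{r'}$, H\"older the product into $m+1$ identical sub-admissible space-time norms $L_t^{(m+1)q'}L_x^{(m+1)r'}$ handled by Corollary \ref{cor:subadest} (the simultaneous solvability of the admissibility constraints being exactly $m\le 4/d$, and the accumulated powers of $\la$ giving $\la^{-1+dm/4}$), and for (N1) insert a Gagliardo--Nirenberg interpolation on the $v$-factors to trade the excess Lebesgue exponent for $\na v$. The one detail to watch is that for $d\ge 3$ and $m$ near $\apa_{\max}$ the Gagliardo--Nirenberg base exponent must be taken strictly larger than $2$ (the paper uses $p=\fc{2(m+1)d}{2(m+1)+d}$ with $b=1$ there), which your parenthetical already anticipates.
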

\begin{proof}
Consider (N0). For $(q,r)\in\ml A$, 
\begin{equation}
\||u||v|^{m}\|_{N(t)}\le\||u||v|^{m}\|_{L_{t}^{q'}L_{x}^{r'}(t)}\le\|u\|_{L_{t}^{(m+1)q'}L_{x}^{(m+1)r'}(t)}\|v\|_{L_{t}^{(m+1)q'}L_{x}^{(m+1)r'}(t)}^{m}.\label{eq:firstest}
\end{equation}
We want to show that there is $(q,r)\in\ml A$ such that $((m+1)q',(m+1)r')$
is sub-admissible. That is, there are $q,r$ such that the following
conditions hold ((i),(ii) $\Leftrightarrow(q,r)\in\ml A$; (iii),(iv)
$\Leftrightarrow((m+1)q',(m+1)r')$ is sub-admissible):
\begin{itemize}
\item[(i)]  $\fc 2{q'}+\fc d{r'}=2+\fc d2$.
\item[(ii)]  $r_{\max}'\le r'\le2$.
\item[(iii)]  $\fc 2{(m+1)q'}+\fc d{(m+1)r'}\ge\fc d2$.
\item[(iv)]  $2\le(m+1)r'\le r_{\max}$.
\end{itemize}
It's enough to prove the existence of $r'$ satisfying (ii) and (iv),
since (ii) implies the existence of $q$ such that (i) holds, and
then (iii) also holds by $m\le4/d$. Now (ii) and (iv) are satisfied
by some $r'$ if and only if 
\[
\fc 2{m+1}\le2\quad\mbox{and}\quad r_{\max}'\le\fc{r_{\max}}{m+1},
\]
that is
\begin{equation}
1\le m+1\le\fc{r_{\max}}{r_{\max}'}=\left\{ \begin{array}{ll}
\infty & \mbox{if }d=1\\
3 & \mbox{if }d=2\\
\fc{d+2}{d-2} & \mbox{if }d\ge3.
\end{array}\right.\label{eq:rofm}
\end{equation}
Since $0\le m\le4/d$, \eqref{eq:rofm} is satisfied. (This is where
we need $r_{\max}\ge4$ for $d=2$.) Now let $(q,r)\in\ml A$ be such
that $((m+1)q',(m+1)r')$ is sub-admissible, then \eqref{eq:firstest}
and Corollary \ref{cor:subadest} imply
\[
\||u||v|^{m}\|_{N(t)}\le\wt C^{m+1}\la^{-1+dm/4}\|u\|_{S_{\la,t_{0}}}\|v\|_{S_{\la,t_{0}}}^{m}e^{-(m+1)\la t}.
\]
This proves (N0).

Now we consider (N1). The case of $m=0$ is justified in (N0), so
assume $m>0$. Then, for $(q,r)\in\ml A$ and any $0\le\ta,\phi\le1$,
\begin{equation}
\||u||v|^{m}\|_{N(t)}\le\||u||v|^{m}\|_{L_{t}^{q'}L_{x}^{r'}(t)}\le\|u\|_{L_{t}^{q'/(1-\ta)}L_{x}^{r'/(1-\phi)}(t)}\|v\|_{L_{t}^{mq'/\ta}L_{x}^{mr'/\phi}(t)}^{m}.\label{eq:B1}
\end{equation}
By the Gagliardo-Nirenberg's inequality, if $p,b$ are two numbers
satisfying $p\ge1$, 
\[
0\le b\le1,\ b\ne1\ \mbox{if}\ p=d>1,\tag{b1}
\]
and 
\begin{equation}
\fc{\phi}{mr'}=\fc 1p-\fc bd,\label{eq:GNC}
\end{equation}
then we have 
\begin{align}
\|v\|_{L_{t}^{mq'/\ta}L_{x}^{mr'/\phi}} & =\left\Vert \|v\|_{L_{x}^{mr'/\phi}}\right\Vert _{L_{t}^{mq'/\ta}}\nonumber \\
 & \lesssim_{d,p,b}\left\Vert \|\na v\|_{L_{x}^{p}}^{b}\|v\|_{L_{x}^{p}}^{1-b}\right\Vert _{L_{t}^{mq'/\ta}}\nonumber \\
 & \le\left\Vert \|\na v\|_{L_{x}^{p}}^{b}\right\Vert _{L_{t}^{mq'/(\ta b)}}\left\Vert \|v\|_{L_{x}^{p}}^{1-b}\right\Vert _{L_{t}^{mq'/(\ta(1-b))}}\nonumber \\
 & =\|\na v\|_{L_{t}^{mq'/\ta}L_{x}^{p}}^{b}\|v\|_{L_{t}^{mq'/\ta}L_{x}^{p}}^{1-b}.\label{eq:B1v}
\end{align}
Consider $p\ge2$. Let $\ta=m/(m+1)$ and $\phi=1-r'/p$ (which lies in $[0,1]$
since $p\ge2$), then \eqref{eq:B1} and \eqref{eq:B1v} give 
\begin{equation}
\||u||v|^{m}\|_{N(t)}\lesssim_{d,p,b}\|u\|_{L^{(m+1)q'}L^{p}(t)}\|v\|_{L^{(m+1)q'}L^{p}(t)}^{m(1-b)}\|\na v\|_{L^{(m+1)q'}L^{p}(t)}^{mb},\label{eq:theGineq}
\end{equation}
where 
\begin{equation}
\fc 1{r'}=\fc{m+1}p-\fc{mb}d\label{eq:GNagain}
\end{equation}
from \eqref{eq:GNC}. In summary, for the validity of \eqref{eq:theGineq},
we need (i) $(q,r)\in\ml A$, (ii) $p\ge2$, (iii) (b1) holds, and
(iv) \eqref{eq:GNagain} holds. For the existence of such $q,r,p,b$,
it suffices to show that there exist $p,b$ satisfying (ii), (iii),
and 
\begin{equation}
\fc 12\le\fc{m+1}p-\fc{mb}d\le\fc 1{r_{\max}'}.\label{eq:preb2}
\end{equation}
Since then, by defining $r'$ by \eqref{eq:GNagain} (hence (iv) holds),
there is $q$ such that (i) holds. Notice that \eqref{eq:preb2} is equivalent to
\[
\fc dm\Bigl(\fc{m+1}p-\fc 1{r_{\max}'}\Bigr)\le b\le\fc dm\Bigl(\fc{m+1}p-\fc 12\Bigr).\tag{b2}
\]
Hence we need $p \ge 2$ and $b$ satisfying (b1) and (b2).
Moreover, we want to choose $p,b$ so that $((m+1)q',p)$ is ``strictly'' 
sub-admissible, i.e. $2\le p \le r_{\max}$ and 
\begin{equation}
\mu:=\fc{m+1}2\Bigl(\fc dp+\fc 2{(m+1)q'} -\fc d2\Bigr)=1-\fc m2(\fc d2-b)>0,
\label{eq:4.12}
\end{equation}
where for the equality we use $(q,r)\in\ml A$ and \eqref{eq:GNagain}.
Once we have such $p,b$, then by Corollary \ref{cor:subadest}, 
\eqref{eq:theGineq} gives 
\[
\||u||v|^{m}\|_{N(t)}\lesssim_{d,m}\wt C^{m+1}\la^{-\mu} 
\|u\|_{S_{\la,t_{0}}}\|v\|_{S_{\la,t_{0}}}^{m(1-b)}
\|\na v\|_{S_{\la,t_{0}}}^{mb}e^{-(m+1)\la t},
\]
which is exactly what we want to prove. 

We give possible choices of $p,b$ in the following. 
Notice that $\mu>0$ is trivial if $d\le 2$.

\begin{enumerate}
\item If $d=1$, we can choose $p=2$, and $b$ any number satisfying $\max(\fc{m-1}{2m},0)\le b\le\fc 12$. 
\item If $d=2$, we can choose $p=2$, and $b$ any number satisfying $\max(\fc{2m-1}{2m},0)\le b<1$. 
\item If $d\ge3$, there exists $b$ satisfying (b1) and (b2) if 
\[
\fc{2(m+1)d}{2(m+1)+d}\le p\le 2(m+1),
\]
where the lower bound might be larger than $2$. We consider two cases:

\begin{enumerate}
\item If $0<m\le\fc 2{d-2}$, we can choose $p=2$, and $b$ any number
satisfying $\max(\fc d2-\fc 1m,0)\le b\le1$.
One has $\mu \ge 1-\fc m2(\fc d2-(\fc d2-\fc 1m))=1/2$.

\item If $\fc 2{d-2}<m<\apa_{\max}=\fc 4{d-2}$, we can choose 
$p=\fc{2(m+1)d}{2(m+1)+d}$,
and $b=1$ (the only choice). It is easy to
check that $2\le p \le r_{\max}$ and $\mu>0$.\qedhere
\end{enumerate}
\end{enumerate}

\end{proof}

\subsection{Construction of $e$D-$d$D trains \label{sub:eDdDtrain}}

Consider $1\le e<d$. Let $x=(x',x'')$, where $x'=(x_{1},\ldots,x_{e})$
and $x''=(x_{e+1},\ldots,x_{d})$. In this subsection
we construct mixed dimensional soliton trains of the form
\begin{equation}\label{eq:mixedtrain}
u = T_e + \eta_e + T_d + \eta,
\end{equation}
where 
\begin{align*}
T_e = \sum_{k\in\mb N} R_{e;k}(t,x'),\quad
T_d = \sum_{j\in\mb N} R_{d;j}(t,x),
\end{align*}
with $R_{e;k}$ and $R_{d;j}$ being 
$e$D and $d$D solitons as
given by \eqref{eq:expofsol}, with initial positions assumed 
to be the origin for simplicity.
(The reservation of $j$ for the indices of the $d$D solitons 
and $k$ for those of the $e$D solitons will be convenient.) 
The $e$D error $\eta_e=\eta_e(t,x')$ is such that $T_e+\eta_e$ is 
itself an $e$D train (solution of \eqref{eq:NLSE}), 
whose existence will be provided by the 
previous section. And $\eta=\eta(t,x)$ is the remaining error 
to be found.

Denote the frequencies of
$R_{d;j}$ and $R_{e;k}$ by $\oa_{j}$ and $\sa_{k}$; and 
the velocities by 
\[
v_{j}=(v_{j,1},v_{j,2},\ldots,v_{j,d})\quad\mbox{and}\quad u_{k}=(u_{k,1},\ldots,u_{k,e}).
\]
(Their corresponding bound states and phases will not be used 
explicitly, and hence there is no need to introduce notations 
for them.) $R_{e;k}$ is naturally
regarded as a lower dimensional soliton in $\mb R_{x}^{d}$ by 
considering $R_{e;k}(t,x)\equiv R_{e;k}(t,x')$, with velocity 
$(u_{k,1},\ldots,u_{k,e},0,\ldots,0)$. 

Besides the above, some more modifications of notation given 
in the previous section have to be
made, and some anisotropic generalizations need to be introduced.
We summarize them in the following.
\begin{enumerate}
\item We'll write $A_{d;p}$ for $A_{p}(\{\oa_{j}\})$ and $B_{d;p}$ for
$B_{p}(\{\oa_{j}\},\{v_{j}\})$, as defined in \eqref{eq:defnAB}.
Similarly, we write
\[
\begin{aligned}A_{e;p} & =A_{e;p}(\{\sa_{k}\})=\Bigl(\sum_{k}\sa_{k}^{\min(1,p)(\fc 1{\apa_{1}}-\fc e{2p})}\Bigr)^{\max(1,p^{-1})}\\
B_{e;p} & =B_{e;p}(\{\sa_{k}\},\{u_{k}\})=\Bigl(\sum_{k}\langle u_{k}\rangle^{\min(1,p)}\sa_{k}^{\min(1,p)(\fc 1{\apa_{1}}-\fc e{2p})}\Bigr)^{\max(1,p^{-1})}.
\end{aligned}
\]

\item For $0<p,q\le\infty$, we abbreviate the space $L^{p}(\mb R^{e},L^{q}(\mb R^{d-e}))$
as $L_{x'}^{p}L_{x''}^{q}$. Recall that, for $u:\mb R^{d}\to\mb C$,
\[
\|u\|_{L_{x'}^{p}L_{x''}^{q}}\coloneqq\left\Vert \|u(x',x'')\|_{L_{x''}^{q}}\right\Vert _{L_{x'}^{p}}=\left(\int_{\mb R^{e}}\Bigl(\int_{\mb R^{d-e}}|u(x',x'')|^{q}\, dx''\Bigr)^{p/q}\, dx'\right)^{1/p}.
\]
In particular $L_{x}^{p}=L_{x'}^{p}L_{x''}^{p}$ with exactly the
same norm. The following generalizations are straightforward, hence
we only give them without proof. We have 
\[
\|R_{d;j}\|_{L_{x'}^{p}L_{x''}^{q}}\le D_{p,q}\oa_{j}^{\fc 1{\apa_{1}}-\fc e{2p}-\fc{d-e}{2q}},\quad\mbox{and}\quad\|\na R_{d;j}\|_{L_{x'}^{p}L_{x''}^{q}}\le D_{p,q}\langle v_{j}\rangle\oa_{j}^{\fc 1{\apa_{1}}-\fc e{2p}-\fc{d-e}{2q}},
\]
where $D_{p,q}=D\|e^{-a|y|}\|_{L_{y'}^{p}L_{y''}^{q}}\le D(\fc{2\sqrt{d}}{ap})^{e/p}(\fc{2\sqrt{d}}{aq})^{(d-e)/q}$.
By the same reason as in Remark \ref{rem:p_lb}, we'll absorb $D_{p,q}$
into $\lesssim$. By a similar result of Lemma \ref{lem:moving},
we have 
\[
\|\sum_{j}|R_{d;j}|\|_{L_{x'}^{p}L_{x''}^{q}}\lesssim A_{d;p,q},\quad\mbox{and}\quad\|\sum_{j}|\na R_{d;j}|\|_{L_{x'}^{p}L_{x''}^{q}}\lesssim B_{d;p,q},
\]
where 
\begin{align*}
A_{d;p,q} & \coloneqq\Bigl(\sum_{j}\oa_{j}^{\min(1,p,q)(\fc 1{\apa_{1}}-\fc e{2p}-\fc{d-e}{2q})}\Bigr)^{\max(1,p^{-1},q^{-1})},\\
B_{d;p,q} & \coloneqq\Bigl(\sum_{j}\langle v_{j}\rangle^{\min(1,p,q)}\oa_{j}^{\min(1,p,q)(\fc 1{\apa_{1}}-\fc e{2p}-\fc{d-e}{2q})}\Bigr)^{\max(1,p^{-1},q^{-1})}.
\end{align*}

\item We need all the solitons in both sequences to be separated, hence
we define 
\[
v_{*}=\min(v_{*}(T_{e}),v_{*}(T_{d}),v_{*}(T_{e},T_{d})),
\]
where $v_{*}(T_{e})$ and $v_{*}(T_{d})$ are as defined by \eqref{eq:vstar},
and 
\begin{equation}
v_{*}(T_{e},T_{d})\coloneqq\inf_{j,k\in\mb N}\min(\sa_{k}^{1/2},\oa_{j}^{1/2})|u_{k}-v_{j}'|.\label{eq:vstar_inter}
\end{equation}
Here $v_{j}'=(v_{j,1},\ldots,v_{j,e})$, the first $e$ components
of $v_{j}$. The convention that we add a coefficient $1/2$ in \eqref{eq:vstar}
but not in \eqref{eq:vstar_inter} is only to simplify some expressions.
\item We write $\ml C_{A}^{(d)}$ for the original $\ml C_{A}$, and $\ml C_{A}^{(e)}$
the $e$ dimensional analogue. For the anisotropic case, we define
\[
\ml C_{A}^{(e,d-e)}\coloneqq\Bigl\{(p,q)\in(0,\infty]\times(0,\infty]:\fc 1{\apa_{1}}-\fc e{2p}-\fc{d-e}{2q}>0\Bigr\}.
\]
Similarly, if $\apa_{1}<2$, we have $\ml C_{B}^{(d)}$, $\ml C_{B}^{(e)}$,
and 
\[
\ml C_{B}^{(e,d-e)}\coloneqq\Bigl\{(p,q)\in(0,\infty]\times(0,\infty]:\fc 1{\apa_{1}}-\fc e{2p}-\fc{d-e}{2q}>\fc 12\Bigr\}.
\]
Lemma \ref{lem:compete} can also be generalized. For example, if
$\apa_{1}<2$, $(q_{1},q_{2})\in\ml C_{A}^{(e,d-e)}$, $(p_{1},p_{2})\in\ml C_{B}^{(e,d-e)}$,
then we can choose $\{\oa_{j}\}$ and $\{v_{j}\}$ so that $A_{d;q_{1},q_{2}}$
and $B_{d;p_{1},p_{2}}$ are as small as we like, and $v_{*}$ as
large as we like (see Appendix \ref{appI}). We shall not give a description of all the needed 
facts, but just claim that, as before, it suffices to check that all the indices of
$A,B$ appearing in our proofs lie in their corresponding
controllable class $\ml C$. 
\end{enumerate}

%We are going to construct mixed dimensional soliton trains of the form 
%
%where $T_{e}=\sum_{k}R_{e;k}$, $T_{d}=\sum_{j}R_{d;j}$, $\eta_{e}=\eta(t,x')$
%is such that $T_{e}+\eta_{e}$ is an $e$D train, and $\eta=\eta(t,x)$
%the $d$D error (or more precisely the remaining error) to be  found. 
To construct solutions of the form \eqref{eq:mixedtrain},
as discussed in Section \ref{sub:mixed}, we consider the operator
$\Phi$ in \eqref{eq:Duhamel} with source term $G+H$, where 
\begin{align*}
G & =f(T_{e}+\eta_{e}+T_{d}+\eta)-f(T_{e}+\eta_{e}+T_{d}),\\
H & =f(T_{e}+\eta_{e}+T_{d})-f(T_{e}+\eta_{e})-\sum_{j}f(R_{d;j}).
\end{align*}
For convenience, we further divide $H$ into $H_{1}+H_{2}$, where 
\begin{align*}
H_{1} & =f(T_{e}+\eta_{e}+T_{d})-f(T_{e}+\eta_{e})-f(T_{d}),\\
H_{2} & =f(T_{d})-\sum_{j}f(R_{d;j}).
\end{align*}
The Strichartz estimate asserts 
\begin{equation}
\|\Phi\eta\|_{S(t)}\lesssim\|G+H_1+H_2\|_{N(t)},\quad\mbox{and}\quad
\|\na\Phi\eta\|_{S(t)}\lesssim\|\na G+\na H_1+\na H_2\|_{N(t)}.\label{eq:StrEst}
\end{equation}
Estimates for $H_{2}$ (or $\na H_{2}$) will be
provided by Lemma \ref{lem:imp}. 

We now give our first main result. Notice that ``$e$'' here corresponds
to the role of ``$d$'' in Section \ref{sec:single}.

\begin{thm}
\label{thm:mixed0} Let $1\le e\le 3$, $e<d\le e+3$,
and $f$ satisfy Assumptions \ref{assuF}, 
\hyperref[assuT]{(T)$_e$}, and \hyperref[assuT]{(T)$_d$}.
Suppose $2(\fc 12-\fc 1e)<\apa_{1}\le\apa_{2}\le 4/d$.
For fixed $0<\rho,t_{0}<\infty$, there is a constant $\la_{0}>0$
such that the following holds: For $\la_{0}\le\la<\infty$, there
exist solutions of \eqref{eq:NLSE} of the form 
\eqref{eq:mixedtrain}, with
\[
\sup_{t\ge t_{0}}e^{\la t}\left\{ \|\eta_{e}(t)\|_{L_{x'}^{2}\cap L_{x'}^{\infty}}+\|\eta\|_{S(t)}\right\} \le\rho.
\]
\end{thm}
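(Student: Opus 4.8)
The proof proceeds in two layers. \emph{First}, I apply Corollary~\ref{cor:single-sum} in dimension $e$: the hypothesis $2(\tfrac12-\tfrac1e)<\apa_1$ puts us in its case~(ii) when $e\in\{2,3\}$ (there $\apa_1<2$ automatically, since $d\ge e+1\ge3$ forces $\apa_1\le\apa_2\le4/d\le\tfrac43$), and in its case~(i) when $e=1$. This yields, for the prescribed $\la$ and any $\rho'>0$ we care to fix, frequencies $\{\sa_k\}$, velocities $\{u_k\}$, and an error $\eta_e=\eta_e(t,x')$ with $T_e+\eta_e$ a solution of \eqref{eq:NLSE} and $\sup_{t\ge0}e^{\la t}\|\eta_e(t)\|_{L^2_{x'}\cap L^\infty_{x'}}\le\rho'$; interpolation then gives $\|\eta_e(t)\|_{L^p_{x'}}\le\rho'e^{-\la t}$ for all $p\in[2,\infty]$. \emph{Second}, I freeze $\Theta\coloneqq T_e+\eta_e$ and look for $\eta$ as a fixed point of $\Phi$, with source $G+H_1+H_2$ as displayed before the statement, on the ball $\{\eta\in S_{\la,t_0}:\|\eta\|_{S_{\la,t_0}}\le\rho\}$, using only the first Strichartz bound $\|\Phi\eta\|_{S(t)}\lesssim\|G\|_{N(t)}+\|H_1\|_{N(t)}+\|H_2\|_{N(t)}$. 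Every $N(t)$-norm below is estimated by pairing against the admissible pair $(\infty,2)\in\ml A$, i.e.\ $\|\xi\|_{N(t)}\le\int_t^\infty\|\xi(s)\|_{L^2_x}\,ds$; since $\eta$ only carries $L^p_x$ control for $p\ge2$ and the lower-dimensional objects only $L^\infty_{x'}$ control, this choice sidesteps both the forbidden endpoint and any anisotropic estimate of $\eta$.

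\emph{The term $G$.} By \eqref{eq:fineq0}, $|G|\lesssim\sum_i\{|\eta|(|T_e|+|\eta_e|+|T_d|)^{\apa_i}+|\eta|^{\apa_i+1}\}$. The decisive gain from Section~\ref{sec:single} is that $|T_e|+|\eta_e|+|T_d|$ is bounded in $L^\infty_{t,x}$ by $\lesssim A_{e;\infty}+A_{d;\infty}+\rho'$, so the first group of terms is $\lesssim(A_{e;\infty}+A_{d;\infty}+\rho')^{\apa_i}\la^{-1}\rho\,e^{-\la t}$ after using $\|\eta(s)\|_{L^2_x}\le\rho e^{-\la s}$. For $|\eta|^{\apa_i+1}$, Lemma~\ref{lem:main}(N0) with $u=v=\eta$, $m=\apa_i$ — legitimate precisely because $\apa_i\le\apa_2\le4/d$ — gives $\lesssim\la^{-1+d\apa_i/4}\rho^{\apa_i+1}e^{-(\apa_i+1)\la t}$; writing this as $(\la^{-1+d\apa_i/4}\rho^{\apa_i}e^{-\apa_i\la t_0})\cdot\rho e^{-\la t}$ for $t\ge t_0$ shows the prefactor tends to $0$ as $\la\to\infty$, uniformly in how large $\rho$ is. This is exactly where $t_0>0$ enters (cf.\ Remark~\ref{rem:ini}). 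The Lipschitz bounds for $\Phi\eta_1-\Phi\eta_2$ involve only such $G$-type differences and are obtained in the same way.

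\emph{The terms $H_1$ and $H_2$.} $H_2=f(T_d)-\sum_j f(R_{d;j})$ is handled directly by Lemma~\ref{lem:imp}(H0) with $r=2$ and a suitable $s\in(0,2)$ (any $s>\tfrac{d\apa_1}{2(\apa_i+1)}$, which is admissible since $d\apa_1\le4$), producing decay $e^{-a(1-s/2)v_*t}$ that outpaces $e^{-\la t}$ once $v_*$ is large. The real work is $H_1=f(\Theta+T_d)-f(\Theta)-f(T_d)$: I would use Corollary~\ref{cor:decomp} with $w_1=\Theta$, $w_2=T_d$, $\phi_i=0$ and a free $\theta_i\in[\max(1-\apa_i,0),1)$, then split $|\Theta|\le|T_e|+|\eta_e|$. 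For the $|\eta_e|$-parts ($|\eta_e|^{\apa_i+\theta_i}|T_d|^{1-\theta_i}$ and $|\eta_e||T_d|^{\apa_i}$) one performs H\"older in $x'$ with the $\eta_e$-factor in $L^2_{x'}$ and the $T_d$-factor in $L^\infty_{x'}$, reducing to the \emph{anisotropic} quantities $A_{d;\infty,2(1-\theta_i)}$ and $A_{d;\infty,2\apa_i}$; these lie in $\ml C_A^{(e,d-e)}$ iff $2(1-\theta_i)>\tfrac{(d-e)\apa_1}{2}$ and $2\apa_i>\tfrac{(d-e)\apa_1}{2}$, and, since we also need $\apa_i+\theta_i\ge1$ so that $|\eta_e|^{\apa_i+\theta_i}$ decays at least like $e^{-\la t}$, a feasible $\theta_i$ exists iff $\max(1-\apa_i,0)<1-\tfrac{(d-e)\apa_1}{4}$ — whose binding case $\apa_i=\apa_1<1$, $\theta_i=1-\apa_1$, is exactly $d\le e+3$. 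For the $|T_e|$-parts, which carry no a priori time decay, I would extract it from the inter-train separation $v_*(T_e,T_d)$: in each product of a power of $|R_{e;k}|$ with a power of $|R_{d;j}|$ the $x'$-profiles are solitons centered at $u_kt$ and $v_j't$, and $\sa_k^{1/2}|x'-u_kt|+\oa_j^{1/2}|x'-v_j't|\ge\min(\sa_k^{1/2},\oa_j^{1/2})|u_k-v_j'|t\ge v_*(T_e,T_d)t$, so by \eqref{eq:deofphi} one gains a factor $e^{-\fc a2 v_*t}$ while the residual $L^2_x$-integral is a product of two integrable soliton profiles bounded by an $A_{e;\cdot}A_{d;\cdot}$-type quantity (the sums over $k,j$ reorganized as in Lemma~\ref{lem:moving}). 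Hence $\|H_1\|_{N(t)}\lesssim(\text{small})e^{-\la t}$ once all the relevant $A$'s are small and $v_*$ is large.

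\emph{Assembling the parameters.} Since no gradient of $\eta$ is measured, the quantities $B$ never occur and $\apa_1<2$ is needed only for Step~1 (where it is automatic, as above). By Lemma~\ref{lem:compete} and its anisotropic analogue (Appendix~\ref{appI}), one may choose $\{\oa_j\},\{v_j\}$ — after $\{\sa_k\},\{u_k\}$ have been fixed in Step~1 — so that every $A_{e;p}$, $A_{d;p}$, $A_{d;p,q}$ appearing above is at most any preassigned $\vn>0$, and $v_*=\min(v_*(T_e),v_*(T_d),v_*(T_e,T_d))\ge\La$ for any preassigned $\La$. Choosing first $\rho',\vn$ small and then $\la$ (hence $\La$) large makes $\Phi$ a contraction of the $\rho$-ball of $S_{\la,t_0}$ into itself; its fixed point $\eta$, together with $\eta_e$, is the claimed solution $T_e+\eta_e+T_d+\eta$. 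I expect the $H_1$ estimate to be the main obstacle — one must simultaneously produce spatial decay (forced by the exponents $r'\le2$ entering $N(t)$) and temporal decay for the mixed $T_e\cdot T_d$ interactions through $v_*(T_e,T_d)$, and it is the anisotropic bookkeeping for the $\eta_e\cdot T_d$ interactions that pins down the sharp range $e<d\le e+3$.
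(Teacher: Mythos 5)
Your proposal is correct and follows essentially the same route as the paper's proof: first produce the $e$D train from Corollary~\ref{cor:single-sum}, then run the contraction for $\eta$ in $S_{\la,t_0}$ with the identical splitting $G+H_1+H_2$, using Lemma~\ref{lem:main}(N0) and $t_0>0$ for $|\eta|^{\apa_i+1}$, Lemma~\ref{lem:imp}(H0) for $H_2$, the anisotropic $L^{\infty}_{x'}L^{q}_{x''}$ bounds on $T_d$ for the $\eta_e\cdot T_d$ interaction (which is indeed where $d\le e+3$ enters), and the inter-train separation $v_*(T_e,T_d)$ for the $T_e\cdot T_d$ interaction; your bookkeeping of $H_1$ via a free $\ta_i$ in Corollary~\ref{cor:decomp} is a cosmetic variant of the paper's choice $\ta_i=\max(1-\apa_i,0)$ with the excess powers absorbed into an $L^\infty$-bounded factor. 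The one step you leave unverified is the spatial summability of the ``residual'' in the $T_e\cdot T_d$ term after extracting $e^{-cv_*t}$: the paper settles this by interpolating a $t$-uniform anisotropic $\|\,|T_e||T_d|^{\ga}\|_{L^{s_1}_x}$ bound with $s_1<2$ (whose feasibility again uses $d-e<4$, via an optimized H\"older split between $L^{s_1/\ta}_{x'}$ and $L^{s_1/(1-\ta)}_{x'}L^{s_1}_{x''}$) against the exponentially decaying $L^\infty_x$ bound, which is the same kind of arithmetic you already carried out for the $\eta_e\cdot T_d$ part.
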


\begin{rem*}
We need $\alpha_2 \le 4/d$ so that we can bound 
$\||\eta|^{\alpha_2+1}\|_{N(t)}$ by $\|\eta\|_{S(t)}^{\alpha_2+1}$
from Lemma \ref{lem:main}. We need $d\le e+3$ in estimating
$\||T_e+\eta_e||T_d|^{\apa_1}\|_{N(t)}$.
We may take $t_0=0$ if $ \alpha_2 < 4/d$, or if $\rho$ is sufficiently small.
\end{rem*}

\begin{rem*} 
It's most natural to view the $e$D-$d$D trains as solutions of 
\eqref{eq:NLSE} in $\mb{R}^d_x$, with $d$D solitons being 
``points'' and $e$D solitons lower dimensional objects.
Nevertheless, as we have mentioned in the introduction,
we can also freely regard them as living in an even higher 
dimension, so that both $e,d$ have nonzero
codimensions to the ambient space.
\end{rem*}

\begin{proof}
We will only consider $\rho=2$. The cases of other $\rho$
can be treated similarly.

First, from the assumption, if $e=2,3$, then $d\ge3$, and hence 
$\apa_{1}<2$. Thus, for $e=1,2,3$, if $\la$ is large enough, 
Corollary \ref{cor:single-sum} implies the existence 
of an $e$D train $T_{e}+\eta_{e}$ satisfying 
\begin{equation}
\|\eta_{e}(t)\|_{L_{x'}^{2}\cap L_{x'}^{\infty}}
\le e^{-\la t},\quad\forall t\ge t_{0}.\label{eq:Lerror}
\end{equation}
It remains to prove that $\Phi$ can be a contraction mapping 
on the closed unit ball of $S_{\la,t_{0}}$.
As before, we'll only give estimates for $\Phi$ to be a 
self-mapping.

Suppose $\eta\in S_{\la,t_{0}}$ with $\|\eta\|_{S_{\la,t_{0}}}\le1$,
i.e. $\|\eta\|_{S(t)}\le e^{-\la t}$ for $t\ge t_{0}$. To estimate
$\|\Phi\eta\|_{S_{\la,t_{0}}}$ from the Strichartz estimate \eqref{eq:StrEst},
we have to estimate $\|G\|_{N(t)}$, $\|H_{1}\|_{N(t)}$ and $\|H_{2}\|_{N(t)}$.
Since $\|\cdot\|_{N(t)}\le\|\cdot\|_{L_{t}^{1}L_{x}^{2}}$, we'll
frequently just estimate $\|\cdot\|_{L_{t}^{1}L_{x}^{2}}$. Also repeatedly
used is the fact $\|\eta\|_{L_{t}^{1}L_{x}^{2}(t)}\le\la^{-1}e^{-\la t}$,
obtained from \eqref{eq:StrL2} (or Corollary \ref{cor:subadest}).

\medskip

\textbf{Part 1. Estimate of $\|G\|_{N(t)}$.} We have 
\[
|G|\lesssim\sum_{i=1,2}(|\eta||T_{e}+\eta_{e}+T_{d}|^{\apa_{i}}+|\eta|^{\apa_{i}+1}).
\]
For the first term, we have
\begin{align}
\||\eta||T_{e}+\eta_{e}+T_{d}|^{\apa_{i}}\|_{L_{t}^{1}L_{x}^{2}(t)} & \lesssim\|\eta\|_{L_{t}^{1}L_{x}^{2}(t)}\|T_{e}+\eta_{e}+T_{d}\|_{L_{t}^{\infty}L_{x}^{\infty}(t)}^{\apa_{i}}\nonumber \\
 & \lesssim(A_{e;\infty}+e^{-\la t_{0}}+A_{d;\infty})^{\apa_{i}}\la^{-1}e^{-\la t},\label{eq:T3_G1}
\end{align}
by \eqref{eq:Lerror}. For the second term, since $\apa_{2}\le 4/d$ 
(and hence $\apa_{1}\le 4/d$),
Lemma \ref{lem:main} (N0) implies
\begin{equation}
\||\eta|^{\apa_{i}+1}\|_{N(t)}=\||\eta||\eta|^{\apa_{i}}\|_{N(t)}\lesssim(\la^{-1+d\apa_{i}/4}e^{-\apa_{i}\la t_{0}})e^{-\la t}.\label{eq:T3_G2}
\end{equation}
Notice that for the endpoint case $\apa_{i}=4/d$, the smallness of
the coefficient (obtained by letting $\la$ large) have to be provided
by $e^{-\apa_{i}\la t_{0}}$. This is the reason we consider an initial
time $t_{0}>0$. 
By \eqref{eq:T3_G1} and \eqref{eq:T3_G2} we get
the needed estimate of $\|G\|_{N(t)}$.

\medskip

\textbf{Part 2. Estimate of $\|H_{1}\|_{N(t)}$. }By Corollary \ref{cor:decomp},
\begin{align*}
|H_{1}| & \lesssim\sum_{i=1,2}(|T_{e}+\eta_{e}|^{\max(1,\apa_{i})}|T_{d}|^{\min(1,\apa_{i})}+|T_{e}+\eta_{e}||T_{d}|^{\apa_{i}})
\\
 & =|T_{e}+\eta_{e}||T_{d}|^{\min(1,\apa_{1})}h_{1},
\end{align*}
where 
\[
\|h_{1}\|_{L_{t}^{\infty}L_{x}^{\infty}(t)}\lesssim\sum_{i=1,2}A_{d;\infty}^{\min(1,\apa_{i})-\min(1,\apa_{1})}(A_{e;\infty}+e^{-\la t_{0}}+A_{d;\infty})^{\max(0,\apa_{i}-1)}.
\]
Thus it suffices to estimate $\||\eta_{e}||T_{d}|^{\min(1,\apa_{1})}\|_{N(t)}$
and $\||T_{e}||T_{d}|^{\min(1,\apa_{1})}\|_{N(t)}$. In the following
we denote $\ga=\min(1,\apa_{1})$ to save notation. 

\emph{Part 2-1. Estimate of $\||\eta_{e}||T_{d}|^{\ga}\|_{N(t)}$.
}Since $L_{x}^{2}=L_{x'}^{2}L_{x''}^{2}$,
\begin{align}
\||\eta_{e}||T_{d}|^{\ga}\|_{N(t)} & \le\||\eta_{e}||T_{d}|^{\ga}\|_{L_{t}^{1}L_{x'}^{2}L_{x''}^{2}(t)}\nonumber \\
 & \le\|\eta_{e}\|_{L_{t}^{1}L_{x'}^{2}L_{x''}^{\infty}(t)}\||T_{d}|^{\ga}\|_{L_{t}^{\infty}L_{x'}^{\infty}L_{x''}^{2}(t)}\nonumber \\
 & \lesssim A_{d;\infty,2\ga}^{\ga}\la^{-1}e^{-\la t}.\label{eq:forrmk}
\end{align}
Now $(\infty,2\ga)\in\ml C_{A}^{(e,d-e)}$ means 
$\fc 1{\apa_{1}}>\fc{d-e}{4\ga}$.
If $\ga=\apa_{1}$, it's true since $d-e<4$.
If $\ga=1$, it's true since $\apa_{1}\le4/d$.

\emph{Part 2-2. Estimate of $\||T_{e}||T_{d}|^{\ga}\|_{N(t)}$. }We
first prove the exponential decay of its $L_{x}^{2}$ norm by interpolation.

\emph{Step 1. }For $s\in(0,\infty]$ and $\ta\in[0,1]$,
\begin{equation}
\||T_{e}||T_{d}|^{\ga}\|_{L_{x}^{s}}\le\|T_{e}\|_{L_{x'}^{s/\ta}L_{x''}^{\infty}}\||T_{d}|^{\ga}\|_{L_{x'}^{s/(1-\ta)}L_{x''}^{s}}\lesssim A_{e;s/\ta}A_{d;\ga s/(1-\ta),\ga s}^{\ga}.\label{eq:T3_H1_2-1-1}
\end{equation}
We need $s/\ta\in\ml C_{A}^{(e)}$ and $(\ga s/(1-\ta),\ga s)\in\ml C_{A}^{(e,d-e)}$,
that is
\[
\fc 1{\apa_{1}}>\fc e{2(s/\ta)}\quad\mbox{and}\quad\fc 1{\apa_{1}}>\fc e{2(\ga s/(1-\ta))}+\fc{d-e}{2\ga s},
\]
or equivalently 
\[
s>\max\left(\fc{e\ta\apa_{1}}2,\fc{(d-e\ta)\apa_{1}}{2\ga}\right).
\]
We hope this can be satisfied by some $s<2$, by choosing a suitable
$\ta$. A little computation shows that the minimum of the ``max''
is achieved by letting 
\begin{equation}
\ta=\min\left(\fc d{e(1+\ga)},1\right).\label{eq:Thm3_vta}
\end{equation}
Precisely we have the following alternatives:
\begin{enumerate}
\item If $\ta=\fc d{e(1+\ga)}\le 1$, we get $s>\fc{d\apa_{1}}{2(1+\ga)}$. 
\item If $\ta=1<\fc d{e(1+\ga)}$, we get 
$s>\fc{(d-e)\apa_{1}}{2\ga}$.
\end{enumerate}
It's straightforward to check that, for all $(e,d,\apa_1)$ 
satisfying our assumptions, the above lower bound of $s$
is less than $2$. (Here we use $d-e<4$ again.)
Thus, if $\ta$ is given by \eqref{eq:Thm3_vta},
there exists $0<s_{1}<2$ such that \eqref{eq:T3_H1_2-1-1} holds
with $s=s_{1}$.

\textit{Step 2.} We have 
\begin{align}
\||T_{e}||T_{d}|^{\ga}(\tau)\|_{L_{x}^{\infty}} & \le\|(\sum_{k}|R_{e;k}|)(\sum_{j}|R_{d;j}|)^{\ga}(\tau)\|_{L_{x}^{\infty}}\nonumber \\
 & \le\|(\sum_{k}|R_{e;k}|)(\sum_{j}|R_{d;j}|^{\ga})(\tau)\|_{L_{x}^{\infty}}\quad(\mbox{since }\ga\le1)\nonumber \\
 & =\|\sum_{k,j}|R_{e;k}||R_{d;j}|^{\ga}(\tau)\|_{L_{x}^{\infty}}\nonumber \\
 & \le\sum_{k,j}\||R_{e;k}||R_{d;j}|^{\ga}(\tau)\|_{L_{x}^{\infty}}.\label{eq:T3_H1_2-2-1}
\end{align}
We also have 
\begin{align*}
|R_{e;k}||R_{d;j}|^{\ga}(\tau) & \lesssim\sa_{k}^{\fc 1{\apa_{1}}}\oa_{j}^{\fc{\ga}{\apa_{1}}}e^{-a\sa_{k}^{1/2}|x'-u_{k}\tau|-a\ga\oa_{j}^{1/2}|x-v_{j}\tau|}\\
 & \le\sa_{k}^{\fc{1}{\apa_{1}}}\oa_{j}^{\fc{\ga}{\apa_{1}}}e^{-a\sa_{k}^{1/2}|x'-u_{k}\tau|-a\ga\oa_{j}^{1/2}|x'-v_{j}'\tau|},
\end{align*}
where recall that $v_{j}'=(v_{j,1},\ldots,v_{j,e})$ consists of the first
$e$ components of $v_{j}$. Note that for any $c_{1},c_{2}>0$ and
$w_{1},w_{2}\in\mb R^{n}$, 
\[
b_{1}|x-w_{1}|+b_{2}|x-w_{2}|\ge\min(b_{1},b_{2})(|x-w_{1}|+|x-w_{2}|)\ge\min(b_{1},b_{2})|w_{1}-w_{2}|.
\]
Thus 
\begin{align*}
|R_{e;k}||R_{d;j}|^{\ga}(\tau) & \lesssim\sa_{k}^{\fc 1{\apa_{1}}}\oa_{j}^{\fc{\ga}{\apa_{1}}}e^{-a\ga\min(\sa_{k}^{1/2},\oa_{j}^{1/2})|u_{k}-v_{j}'|\tau}\\
% & \le\sa_{k}^{\fc 1{\apa_{1}}}\oa_{j}^{\fc{\ga}{\apa_{1}}}e^{-a\min(1,\ga)\min(\sa_{k}^{1/2},\oa_{j}^{1/2})|u_{k}-v_{j}'|\tau}\\
 & \le\sa_{k}^{\fc{1}{\apa_{1}}}\oa_{j}^{\fc{\ga}{\apa_{1}}}e^{-a\min(1,\apa_{1})v_{*}\tau}.
\end{align*}
Taking this into \eqref{eq:T3_H1_2-2-1}, we get
\begin{align}
\||T_{e}||T_{d}|^{\ga}(\tau)\|_{L_{x}^{\infty}} & \lesssim\sum_{k,j}\sa_{k}^{\fc 1{\apa_{1}}}\oa_{j}^{\fc{\ga}{\apa_{1}}}e^{-a\min(1,\apa_{1})v_{*}\tau}\nonumber \\
 & =(\sum_{k}\sa_{k}^{\fc 1{\apa_{1}}})(\sum_{j}\oa_{j}^{\fc{\ga}{\apa_{1}}})e^{-a\min(1,\apa_{1})v_{*}\tau}.\label{eq:T3_H1_2-3-1}
\end{align}
The number $\sum_{j}\oa_{j}^{\fc{\ga}{\apa_{1}}}$ can be controlled
as $A_{d;p}$ as described in Lemma \ref{lem:compete}. For preciseness,
we can fix any $p_{1}\in\ml C_{A}^{(d)}$ close to $d\apa_1/2$ such that 
\[
\fc{\ga}{\apa_{1}}\ge 
\min(1,p_{1})\Bigl(\fc 1{\apa_{1}}-\fc d{2p_{1}}\Bigr),
\]
which implies 
\[
\sum_{j}\oa_{j}^{\fc{\ga}{\apa_{1}}}\lesssim\sum_{j}\oa_{j}^{\min(1,p_{1})(\fc 1{\apa_{1}}-\fc d{2p_{1}})}=A_{d;p_{1}}^{\min(1,p_{1})}.
\]
Thus \eqref{eq:T3_H1_2-3-1} gives
\begin{equation}
\||T_{e}||T_{d}|^{\ga}(\tau)\|_{L_{x}^{\infty}}\lesssim A_{e;\infty}A_{d;p_{1}}^{\min(1,p_{1})}e^{-a\min(1,\apa_{1})v_{*}\tau}.\label{eq:T3_H1_2-4-1}
\end{equation}

From \eqref{eq:T3_H1_2-1-1} (with $\ta$ given by \eqref{eq:Thm3_vta}
and $s=s_{1}<2$) and \eqref{eq:T3_H1_2-4-1}, we get 
\[
\||T_{e}||T_{d}|^{\ga}(\tau)\|_{L_{x}^{2}}\lesssim E_{2}e^{-(1-s_{1}/2)a\min(1,\apa_{1})v_{*}\tau}.
\]
We omit the expression of $E_{2}$, which is obvious while cumbersome.
Suppose 
\begin{equation}
(1-s_{1}/2)a\min(1,\apa_{1})v_{*}\ge\la,
\end{equation}
we get 
\[
\||T_{e}||T_{d}|^{\ga}\|_{N(t)}\le\||T_{e}||T_{d}|^{\ga}\|_{L_{t}^{1}L_{x}^{2}(t)}\lesssim E_{2}\la^{-1}e^{-\la t}.
\]

\textbf{Part 3. Estimate of $\|H_{2}\|_{N(t)}$.} 
Choose $s_2\in (\fc{d\apa_{1}}{2(\apa_{1}+1)},2)$
(it's easy to check that the interval is nonempty). 
Then Lemma \ref{lem:imp} (H0)
implies 
\[
\|H_{2}(\tau)\|_{L_{x}^{2}}\lesssim(\sum_{i=1,2}A_{d;(\apa_{i}+1)s_{2}}^{\apa_{i}+1})^{s_{2}/2}(\sum_{i=1,2}A_{d;\infty}^{\apa_{i}+1})^{1-s_{2}/2}e^{-a(1-s_{2}/2)v_{*}\tau},
\]
with $(\apa_{1}+1)s_{2}\in\ml C_{A}^{(d)}$. Thus, suppose 
\begin{equation}
a(1-s_{2}/2)v_{*}\ge\la,
\end{equation}
we get 
\[
\|H_{2}\|_{N(t)}\le\|H_{2}\|_{L_{t}^{1}L_{x}^{2}(t)}\lesssim(\sum_{i=1,2}A_{d;(\apa_{i}+1)s_{2}}^{\apa_{i}+1})^{s_{2}/2}(\sum_{i=1,2}A_{d;\infty}^{\apa_{i}+1})^{1-s_{2}/2}\la^{-1}e^{-\la t}.
\]

From the conclusions in Part 1, Part 2, and Part 3, 
we are done.\end{proof}
\begin{rem}
Without using the anisotropic estimates for $T_{d}$, our assertions
will be much weaker. For example, consider \eqref{eq:forrmk} in 
\emph{Part 2-1}. If we do not use an anisotropic estimate of $T_{d}$, 
we can only estimate as follows: For any $(q,r)\in\ml A$ 
\begin{align*}
\||\eta_{e}||T_{d}|^{\ga}\|_{N(t)} & \le\||\eta_{e}||T_{d}|^{\ga}\|_{L_{t}^{q'}L_{x}^{r'}(t)}\\
 & \le\|\eta_{e}\|_{L_{t}^{q'}L_{x}^{\infty}(t)}\||T_{d}|^{\ga}\|_{L_{t}^{\infty}L_{x}^{r'}(t)}\lesssim A_{d;\ga r'}^{\ga}\|\eta_{e}\|_{L_{t}^{q'}L_{x}^{\infty}(t)}.
\end{align*}
Now for $\ga r'\in\ml C_{A}^{(d)}$, we need
\begin{align}\label{eq:no_ani}
\fc{d\apa_{1}}{2\ga}<r'\le 2.
\end{align}
If $d\ge 4$, we have $\ga=\min(1,\apa_1)=\apa_1$ (since $\apa_i\le 4/d$),
and \eqref{eq:no_ani} is impossible.
Thus only $d\le 3$ is allowed. Moreover, even for $d\le 3$, if 
$\ga=1$, the endpoint case $\apa_{1}=\apa_{2}=4/d$ is excluded. 
\end{rem}
When $1\le\apa_{1}<2$, Theorem \ref{thm:single2} implies the existence
of an 1D train $T_{1}+\eta_{1}$ such that $\|\eta_{1}(t)\|_{W_{x}^{1,\infty}}$
has exponential decay. This allows us to use the gradient estimate
when $e=1$. Precisely, we can try to construct a mixed train of the
form $T_{1}+\eta_{1}+T_{d}+\eta$ ($d>1$), by assuming the exponential
decay of $\|\na\eta\|_{S(t)}$ (besides $\|\eta\|_{S(t)}$). It turns
out that we can do it only for $d=2$, and under a further restriction
on $\apa_{1}$. The result is not only of its own interest, but also
makes it possible to realize the 1D-2D-3D trains in the next section. 
\begin{thm}
\label{thm:mixed1} Let $e=1$, $d=2$, and $f$ satisfy 
Assumptions \ref{assuF}, \hyperref[assuT]{(T)$_1$}, and
\hyperref[assuT]{(T)$_2$}.
Suppose moreover $1\le\apa_{1}<4/3$. 
Then for any finite $\rho>0$, there is a constant $\la_{0}>0$ such 
that the following holds: For $\la_{0}\le\la<\infty$, there exist 
solutions of \eqref{eq:NLSE} 
of the form \eqref{eq:mixedtrain} 
(namely $T_{1}+\eta_{1}+T_{2}+\eta$) such that 
\[
\sup_{t\ge 0}e^{\la t}\left\{ 
\|\eta_{1}(t)\|_{H_{x_{1}}^{1}\cap W_{x_{1}}^{1,\infty}}
+\|\eta\|_{S(t)}+\|\na\eta\|_{S(t)}
\right\} \le\rho.
\]
\end{thm}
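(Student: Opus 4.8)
\emph{Strategy.} The plan is to build the one-dimensional train $T_{1}+\eta_{1}$ first, and then solve for the two-dimensional error $\eta$ by a fixed-point argument whose norm also controls $\na\eta$; carrying the gradient of $\eta$ is precisely what permits the (possibly large) power $\apa_{2}$. Since $1\le\apa_{1}<4/3<2$, Theorem \ref{thm:single2} applies in dimension $1$ in its ``$r=\infty$'' form (available because $d=1$, $\apa_{1}\ge1$); applying it with frequency parameter $\la/c_{1}$, where $c_{1}\in(0,1]$ is the constant it provides, yields for $\la$ large a solution $T_{1}+\eta_{1}$ of \eqref{eq:NLSE} on $[0,\infty)$ with $\|\eta_{1}(t)\|_{H_{x_{1}}^{1}\cap W_{x_{1}}^{1,\infty}}\le e^{-\la t}$. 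We then seek $\eta=\eta(t,x)$, $x\in\mb R^{2}$, in the unit ball (other radii being analogous) of
\[
Y=\bigl\{\eta:\ \|\eta\|_{Y}:=\textstyle\sup_{t\ge0}e^{\la t}(\|\eta\|_{S(t)}+\|\na\eta\|_{S(t)})<\infty\bigr\},
\]
and show that $\Phi$ of \eqref{eq:Duhamel}, with source $G+H_{1}+H_{2}$ as in Section \ref{sub:eDdDtrain} with $(e,d)=(1,2)$, is a contraction there. By \eqref{eq:StrEst} this reduces to bounding $\|G+H_{1}+H_{2}\|_{N(t)}$ and $\|\na(G+H_{1}+H_{2})\|_{N(t)}$ by a small multiple of $e^{-\la t}$. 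We take $t_{0}=0$: since $\apa_{1}<2$ strictly and the high-power $\eta$-monomials are treated by Lemma \ref{lem:main}(N1), whose constant gains a genuine negative power of $\la$ for every $\apa_{i}<\apa_{\max}=\infty$ when $d=2$, no endpoint obstruction arises.

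\emph{The undifferentiated source.} $G$, $H_{1}$, $H_{2}$ are handled essentially as in the proof of Theorem \ref{thm:mixed0} with $(e,d)=(1,2)$. From $|G|\lesssim\sum_{i}(|\eta|\,|T_{1}+\eta_{1}+T_{2}|^{\apa_{i}}+|\eta|^{\apa_{i}+1})$ the first group is put in $L_{t}^{1}L_{x}^{2}$ by Hölder, using the $L_{x'}^{\infty}$ bound on $T_{1}+\eta_{1}$ and the anisotropic $L_{x'}^{p}L_{x''}^{q}$ bounds on $T_{2}$, while the second is $\||\eta|\,|\eta|^{\apa_{i}}\|_{N(t)}$, bounded by Lemma \ref{lem:main}(N1). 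The term $H_{1}$ is treated as in Parts 2-1, 2-2 there, via Corollary \ref{cor:decomp}, the anisotropic bounds on $T_{2}$, and interpolation of an $L_{x}^{s}$ bound ($s<2$) against the $e^{-av_{*}\tau}$-decaying $L_{x}^{\infty}$ bound obtained from the spatial separation of $T_{1}+\eta_{1}$ and $T_{2}$. Finally $H_{2}=f(T_{2})-\sum_{j}f(R_{2;j})$ is a plain two-dimensional $H$, controlled by Lemma \ref{lem:imp}(H0). Imposing $v_{*}\ge c\la$ for a suitable $c>0$ turns each $e^{-\,\cdot\,v_{*}\tau}$ into $e^{-\la\tau}$, with coefficients that are negative powers of $\la$ times products of the $A$'s and $B$'s.

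\emph{The differentiated source.} Here is the new work. For $\na G$ use \eqref{eq:fineq1'} with $w_{1}=T_{1}+\eta_{1}+T_{2}$, $w_{2}=\eta$; since $\apa_{i}\ge1$ one gets monomials of the two shapes $|\eta|(|T_{1}+\eta_{1}|+|T_{2}|+|\eta|)^{\apa_{i}-1}|\na(T_{1}+\eta_{1}+T_{2})|$ and $(|T_{1}+\eta_{1}|+|T_{2}|+|\eta|)^{\apa_{i}}|\na\eta|$. Expanding the powers, each monomial pairs an \emph{isotropic} Strichartz factor ($\eta$ or $\na\eta$, estimated by Corollary \ref{cor:subadest} along a sub-admissible triple) against either a $1$D object ($T_{1}+\eta_{1}$ or $\na(T_{1}+\eta_{1})$: constant in $x''$, bounded in $L_{x'}^{\infty}$) or a $2$D object ($T_{2}$ or $\na T_{2}$: anisotropic bounds); the purely-$\eta$ monomials $\||\na\eta|\,|\eta|^{\apa_{i}}\|$ fall under Lemma \ref{lem:main}(N1). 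For $\na H_{2}$ apply Lemma \ref{lem:imp}(H1) with $d=2$, $r=2$, picking $p\in\ml C_{B}^{(2)}$ and $q$ with $\apa_{i}q\in\ml C_{A}^{(2)}$ and $\fc1p+\fc1q=\fc1s>\fc12$ (possible exactly because $\apa_{1}<2$). For $\na H_{1}$ use \eqref{eq:fineq1''} with $w_{1}=T_{1}+\eta_{1}$, $w_{2}=T_{2}$: since $\apa_{i}\ge1$ this bounds $|\na H_{1}|$ by $\sum_{i}(|T_{1}+\eta_{1}|+|T_{2}|)^{\apa_{i}-1}(|T_{2}|\,|\na(T_{1}+\eta_{1})|+|T_{1}+\eta_{1}|\,|\na T_{2}|)$, whose $L_{x}^{2}$ norm we control by interpolating an $L_{x}^{s}$ bound, $s<2$, against the $e^{-a\min(1,\apa_{1})v_{*}\tau}$-decaying $L_{x}^{\infty}$ bound (produced as in \eqref{eq:T3_H1_2-2-1}--\eqref{eq:T3_H1_2-4-1}), placing every $1$D factor in an $L_{x'}^{p}L_{x''}^{\infty}$ norm and every $2$D factor, $\na T_{2}$ included, in an $L_{x'}^{\infty}L_{x''}^{q}$ norm.

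\emph{Main obstacle and conclusion.} The binding estimate is the $|\na H_{1}|$-monomial $(|T_{1}+\eta_{1}|+|T_{2}|)^{\apa_{i}-1}|T_{1}+\eta_{1}|\,|\na T_{2}|$. Since $\apa_{1}\ge1$, $\na T_{2}$ has no controllable isotropic bound in $L_{x}^{2}(\mb R^{2})$ (that would need $2\in\ml C_{B}^{(2)}$, false for $\apa_{1}\ge1$), so it must be brought in through an anisotropic norm; and since $T_{1}+\eta_{1}$ is controlled only in $L_{x'}^{2}\cap L_{x'}^{\infty}$, forming an $L_{x}^{s}$ function with $s<2$ pushes the $x''$-exponent of $\na T_{2}$ towards $2$. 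Tracking the resulting requirement $(\infty,c)\in\ml C_{B}^{(1,1)}$, i.e. $\fc1{\apa_{1}}-\fc1{2c}>\fc12$ with $c$ driven towards $2$, one is led to $\apa_{1}<4/3$; every other differentiated monomial is milder ($\na(T_{1}+\eta_{1})$ carries an $L_{x'}^{\infty}$ bound, so its companion $\na T_{2}$ may then sit in $L_{x''}^{\infty}$, and $\na H_{2}$ needs only $\apa_{1}<2$). With all these estimates assembled, the anisotropic form of Lemma \ref{lem:compete} (Appendix \ref{appI}) lets us choose $\{\oa_{j}\},\{v_{j}\}$ and $\{\sa_{k}\},\{u_{k}\}$ so that $v_{*}$ is as large as we wish and each $A_{e;\cdot}$, $B_{e;\cdot}$, $A_{d;\cdot,\cdot}$, $B_{d;\cdot,\cdot}$ occurring above is $\le1$; then $v_{*}\ge c\la$ holds for $\la$ large, and the leftover factors $\la^{-(\text{positive})}$ make $\Phi$ map the unit ball of $Y$ into itself. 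The contraction estimates for $\Phi\eta^{(1)}-\Phi\eta^{(2)}$ are obtained in the same way (without any $H$ contribution), so $\Phi$ has a unique fixed point in the ball, which is the desired $\eta$.
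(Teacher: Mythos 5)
Your proposal is correct and follows essentially the same route as the paper: the 1D train from Theorem \ref{thm:single2} with $W^{1,\infty}_{x_1}$ control, a fixed point in the Strichartz space carrying $\na\eta$ with $t_0=0$, \eqref{eq:fineq1'}/\eqref{eq:fineq1''}/Lemma \ref{lem:imp}(H1) for the differentiated source, anisotropic bounds on $T_2,\na T_2$, and the binding requirement $(\infty,2)\in\ml C_B^{(1,1)}$ forcing $\apa_1<4/3$. One minor imprecision: for the $\na H_1$ monomials containing $\eta_1$ or $\na\eta_1$ the exponential decay comes from the time decay of $\eta_1$ itself (a direct $L^2_x$ H\"older estimate, as in the paper's Estimates (1)–(2) of Part 2), not from the separation-induced $e^{-av_*\tau}$ bound, which only applies to the pure soliton products $|T_1||\na T_2|$ and $|T_2||\na T_1|$.
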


\begin{rem*}
We need $\alpha_1 < 4/3$ and $d=2$ to bound
$\|(T_1+\eta_1)\nabla T_2\|_{N(t)}$.
Note $t\ge t_0=0$ even for large $\rho$.
\end{rem*}

\begin{proof} We will assume $\rho=2$ for simplicity.
For $\la$ no less than some positive number, Theorem \ref{thm:single2}
implies the existence of an 1D train $T_{1}+\eta_{1}$ satisfying
\[
\|\eta_{1}(t)\|_{H_{x_{1}}^{1}\cap W_{x_{1}}^{1,\infty}}\le e^{-\la t},\quad\forall t\ge0.
\]
In the following, we denote $S_{\la,0}$ (i.e. the initial time $t_{0}=0$)
by $S_{\la}$, and let $X$ be the Banach space of all 
$\eta:[0,\infty)\times\mb R^{2}\to\mb C$
such that 
\[
\|\eta\|_{X}\coloneqq\|\eta\|_{S_{\la}}+\|\na\eta\|_{S_{\la}}<\infty.
\]
We'll give estimates for $\Phi$ to be a self-mapping on the 
closed unit ball of $X$.

Suppose $\eta\in X$ with $\|\eta\|_{X}\le1$. 
The estimate of $\|\Phi\eta\|_{S_{\la}}$
is the same as in the proof of Theorem \ref{thm:mixed0}, except for
$\||\eta|^{\apa_{2}+1}\|_{N(t)}$. Since the value of $\apa_{2}$
is not restricted, we use Lemma \ref{lem:main} (N1) instead of (N0)
to obtain 
\[
\||\eta|^{\apa_{2}+1}\|_{N(t)}\lesssim\la^{-\mu}e^{-(\apa_{2}+1)\la t},
\]
for some $\mu=\mu(d=2,\apa_{2})>0$. (For $\||\eta|^{\apa_{1}+1}\|_{N(t)}$,
both (N0) and (N1) work.) We remark that here and later we use $\mu$
as a generic constant, whose value may be different in different places.

Now we estimate $\|\na\Phi\eta\|_{S(t)}$. From \eqref{eq:StrEst},
we have to estimate $\|\na G\|_{N(t)}$, $\|\na H_{1}\|_{N(t)}$,
and $\|\na H_{2}\|_{N(t)}$. 

\medskip

\textbf{Part 1. Estimate of $\|\na G\|_{N(t)}$.} Let $W=T_{1}+\eta_{1}+T_{2}$,
then $G=f(W+\eta)-f(W)$. Since $\apa_{1}\ge1$, \eqref{eq:fineq1'}
implies
\[
|\na G|\lesssim\sum_{i=1,2}\Bigl\{|\eta|(|W|+|\eta|)^{\apa_{i}-1}|\na W|+(|W|+|\eta|)^{\apa_{i}}|\na\eta|\Bigr\}.
\]
Thus we have to estimate the $N(t)$ norm of (1) $|\eta||W|^{\apa_{i}-1}|\na W|$,
(2) $|\eta|^{\apa_{i}}|\na W|$, (3) $|W|^{\apa_{i}}|\na\eta|$, and
(4) $|\eta|^{\apa_{i}}|\na\eta|$. We discuss them in the following. 

\emph{Estimate (1).} We have
\begin{align*}
\||\eta||W|^{\apa_{i}-1}|\na W|\|_{L_{t}^{1}L_{x}^{2}(t)} & \le\|W\|_{L_{t}^{\infty}L_{x}^{\infty}}^{\apa_{i}-1}\|\na W\|_{L_{t}^{\infty}L_{x}^{\infty}}\|\eta\|_{L_{t}^{1}L_{x}^{2}}\\
 & \lesssim(A_{1;\infty}+1+A_{2;\infty})^{\apa_{i}-1}(B_{1;\infty}+1+B_{2;\infty})\la^{-1}e^{-\la t}.
\end{align*}

\emph{Estimate (2).} By Lemma \ref{lem:main} (N1), 
\begin{align*}
\||\eta|^{\apa_{i}}|\na W|\|_{N(t)} & \le\|\na W\|_{L_{t}^{\infty}L_{x}^{\infty}}\||\eta|^{\apa_{i}}\|_{N(t)}\\
 & \lesssim(B_{1;\infty}+1+B_{2;\infty})\la^{-\mu}e^{-\apa_{i}\la t},
\end{align*}
for some $\mu>0$.

\emph{Estimate (3).} 
\[
\||W|^{\apa_{i}}|\na\eta|\|_{L_{t}^{1}L_{x}^{2}(t)}\le\|W\|_{L_{t}^{\infty}L_{x}^{\infty}}^{\apa_{i}}\|\na\eta\|_{L_{t}^{1}L_{x}^{2}(t)}\lesssim(A_{1;\infty}+1+A_{2;\infty})^{\apa_{i}}\la^{-1}e^{-\la t}.
\]

\emph{Estimate (4).} Also by Lemma \ref{lem:main} (N1) (with $u=|\na\eta|$
and $v=\eta$ there), we get 
\[
\||\eta|^{\apa_{i}}|\na\eta|\|_{N(t)}\lesssim\la^{-\mu}e^{-(\apa_{i}+1)\la t},
\]
for some $\mu>0$.

\medskip

\textbf{Part 2. Estimate of $\|\na H_{1}\|_{N(t)}$.} Let $w=T_{1}+\eta_{1}$.
Since $\apa_{1}\ge1$, \eqref{eq:fineq1''} implies
\begin{align*}
|\na H_{1}| & =|\na[f(w+T_{2})-f(w)-f(T_{2})]|\\
 & \lesssim\sum_{i=1,2}(|w|+|T_{2}|)^{\apa_{i}-1}(|w||\na T_{2}|+|T_{2}||\na w|).
\end{align*}
Since 
\begin{equation}
\|(|w|+|T_{2}|)^{\apa_{i}-1}\|_{L_{t}^{\infty}L_{x}^{\infty}(t)}\lesssim(A_{1;\infty}+1+A_{2;\infty})^{\apa_{i}-1},\label{eq:rough}
\end{equation}
it suffices to estimate (1) $|\eta_{1}||\na T_{2}|$, (2) $|T_{2}||\na\eta_{1}|$,
(3) $|T_{1}||\na T_{2}|$, and (4) $|T_{2}||\na T_{1}|$. We discuss
them in the following.

\emph{Estimate (1). }
\begin{align*}
\||\eta_{1}||\na T_{2}|\|_{L_{t}^{1}L_{x}^{2}(t)} & \le\|\eta_{1}\|_{L_{t}^{1}L_{x_{1}}^{2}L_{x_{2}}^{\infty}(t)}\|\na T_{2}\|_{L_{t}^{\infty}L_{x_{1}}^{\infty}L_{x_{2}}^{2}(t)}\\
 & \lesssim B_{2;\infty,2}\la^{-1}e^{-\la t}.
\end{align*}
We need $(\infty,2)\in\ml C_{B}^{(1,1)}$, i.e. $\fc 1{\apa_{1}}-\fc 1{2\cdot\infty}-\fc 1{2\cdot2}>\fc 12$.
This is true by $\apa_{1}<4/3$. 

Notice that if we do not use an
anisotropic estimate for $\na T_{2}$, the requirement becomes 
$\apa_{1}<1$, and the construction fails since we assume 
$\apa_{1}\ge1$. Moreover, it is also due to this part that the 
construction is valid only for $d=2$. Indeed, suppose $d\ge 3$,
with coordinates $x=(x_1,x'')$. If for some admissible $(a',b')$,
\[
\||\eta_{1}||\na T_{2}|\|_{L_{t}^{a}L_{x}^{b}(t)}  \le\|\eta_{1}\|_{L_{t}^{a}L_{x_{1}}^{2}L_{x''}^{\infty}(t)}\|\na T_{2}\|_{L_{t}^{\infty}L_{x_{1}}^{p}L_{x''}^{b}(t)},
\]
where $1/p =1/b-1/2$ and  $(p,b)\in\ml C_{B}^{(1,d-1)}$. It follows
\[
\frac 1{\alpha_1} > \frac 12 + \frac 1{2p}+\frac {d-1}{2b} > \frac 12+ 0+\frac 1b \ge 1,
\]
contradicting $1\le\alpha_1$.

\emph{Estimate (2). }
\begin{align*}
\||T_{2}||\na\eta_{1}|\|_{L_{t}^{1}L_{x}^{2}(t)} & \le\|T_{2}\|_{L_{t}^{\infty}L_{x_{1}}^{\infty}L_{x_{2}}^{2}(t)}\|\na\eta_{1}\|_{L_{t}^{1}L_{x_{1}}^{2}L_{x_{2}}^{\infty}(t)}\\
 & \lesssim A_{2;\infty,2}\la^{-1}e^{-\la t},
\end{align*}
where $(\infty,2)\in\ml C_{A}^{(1,1)}$.

\emph{Estimate (3).} We will prove the exponential decay of  $\|T_{1}\na T_{2}\|_{L_{x}^{2}}$
by interpolation. First, for $s\in(0,\infty]$ and $\ta\in[0,1]$,
\begin{equation}
\||T_{1}||\na T_{2}|\|_{L_{x}^{s}}\le\|T_{1}\|_{L_{x_{1}}^{s/\ta}L_{x_{2}}^{\infty}}\|\na T_{2}\|_{L_{x_{1}}^{s/(1-\ta)}L_{x_{2}}^{s}}\lesssim A_{1;s/\ta}B_{2;s/(1-\ta),s}.\label{eq:Thm4_pS1}
\end{equation}
We need $s/\ta\in\ml C_{A}^{(1)}$ and $(s/(1-\ta),s)\in\ml C_{B}^{(1,1)}$,
that is
\[
\fc 1{\apa_{1}}-\fc 1{2(s/\ta)}>0,\quad\mbox{and}\quad\fc 1{\apa_{1}}-\fc 1{2s/(1-\ta)}-\fc 1{2s}>\fc 12,
\]
or equivalently
\[
s>\max\left(\fc{\ta\apa_{1}}2,\fc{(2-\ta)\apa_{1}}{2-\apa_{1}}\right).
\]
The ``max'' is minimized by letting $\ta=1$, which gives $s>\fc{\apa_{1}}{2-\apa_{1}}$.
Since $\apa_{1}<4/3$, the lower bound is less than $2$. Thus \eqref{eq:Thm4_pS1}
implies 
\begin{equation}
\||T_{1}||\na T_{2}|\|_{L_{x}^{s_{1}}}\lesssim A_{1;s_{1}}B_{2;\infty,s_{1}}\label{eq:Thm4_S1}
\end{equation}
for some $s_{1}<2$. Then consider the supremum estimate. 
\begin{align*}
\||T_{1}||\na T_{2}|(\tau)\|_{L_{x}^{\infty}} & \le\|(\sum_{k}|R_{1;k}|)(\sum_{j}|\na R_{2;j}|)(\tau)\|_{L_{x}^{\infty}}\\
 & \le\sum_{k,j}\||R_{1;k}||\na R_{2;j}|(\tau)\|_{L_{x}^{\infty}}.
\end{align*}
We have
\begin{align*}
|R_{1;k}||\na R_{2;j}|(\tau) & \lesssim\sa_{k}^{\fc 1{\apa_{1}}}\oa_{j}^{\fc 1{\apa_{1}}}\langle v_{j}\rangle e^{-a\sa_{k}^{1/2}|x_{1}-u_{k}\tau|-a\oa_{j}^{1/2}|x-v_{j}\tau|}\\
 & \le\sa_{k}^{\fc 1{\apa_{1}}}\oa_{j}^{\fc 1{\apa_{1}}}\langle v_{j}\rangle e^{-a\sa_{k}^{1/2}|x_{1}-u_{k}\tau|-a\oa_{j}^{1/2}|x_{1}-v_{j,1}\tau|}\\
 & \le\sa_{k}^{\fc 1{\apa_{1}}}\oa_{j}^{\fc 1{\apa_{1}}}\langle v_{j}\rangle e^{-a\min(\sa_{k}^{1/2},\oa_{j}^{1/2})|u_{k}-v_{j,1}|\tau}\\
 & \le\sa_{k}^{\fc 1{\apa_{1}}}\oa_{j}^{\fc 1{\apa_{1}}}\langle v_{j}\rangle e^{-av_{*}\tau}.
\end{align*}
Thus
\begin{align}
\||T_{1}||\na T_{2}|(\tau)\|_{L_{x}^{\infty}} & \lesssim\sum_{k,j}\sa_{k}^{\fc 1{\apa_{1}}}\oa_{j}^{\fc 1{\apa_{1}}}\langle v_{j}\rangle e^{-av_{*}\tau}\nonumber \\
 & =(\sum_{k}\sa_{k}^{\fc 1{\apa_{1}}})(\sum_{j}\oa_{j}^{\fc 1{\apa_{1}}}\langle v_{j}\rangle)e^{-av_{*}\tau}\nonumber \\
 & =A_{1;\infty}B_{2;\infty}e^{-av_{*}\tau}.\label{eq:Thm4_infty}
\end{align}
From \eqref{eq:Thm4_S1} and \eqref{eq:Thm4_infty}, we get 
\[
\||T_{1}||\na T_{2}|(\tau)\|_{L_{x}^{2}}\lesssim A_{1;s_{1}}^{s_{1}/2}B_{2;\infty,s_{1}}^{s_{1}/2}A_{1;\infty}^{1-s_{1}/2}B_{2;\infty}^{1-s_{1}/2}e^{-a(1-s_{1}/2)v_{*}\tau}.
\]
Suppose
\begin{equation}
a(1-s_{1}/2)v_{*}\ge\la,
\end{equation}
 then we get 
\[
\||T_{1}||\na T_{2}|\|_{N(t)}\le A_{1;s_{1}}^{s_{1}/2}B_{2;\infty,s_{1}}^{s_{1}/2}A_{1;\infty}^{1-s_{1}/2}B_{2;\infty}^{1-s_{1}/2}\la^{-1}e^{-\la t}.
\]

\emph{Estimate (4).} The strategy is the same. For $s>0$ and $\ta\in[0,1]$,
\[
\||T_{2}||\na T_{1}|\|_{L_{x}^{s}}\le\|T_{2}\|_{L_{x_{1}}^{s/(1-\ta)}L_{x_{2}}^{s}}\|\na T_{1}\|_{L_{x_{1}}^{s/\ta}L_{x_{2}}^{\infty}}\lesssim A_{2;s/(1-\ta),s}B_{1;s/\ta}.
\]
For $(s/(1-\ta),s)\in\ml C_{A}^{(1,1)}$ and $s/\ta\in\ml C_{B}^{(1)}$,
we need
\[
s>\max\left(\fc{(2-\ta)\apa_{1}}2,\fc{\ta\apa_{1}}{2-\apa_{1}}\right).
\]
The ``max'' is minimized by letting $\ta=\fc{2(2-\apa_{1})}{4-\apa_{1}}$,
which gives $s>\fc{2\apa_{1}}{4-\apa_{1}}$, where the lower bound
is less than $2$. Hence 
\[
\||T_{2}||\na T_{1}|\|_{L_{x}^{s_{2}}}\lesssim A_{2;s_{2}(4-\apa_{1})/\apa_{1},s_{2}}B_{1;s_{2}(4-\apa_{1})/(4-2\apa_{1})}
\]
for some $s_{2}<2$. Next,
\begin{align*}
\||T_{2}||\na T_{1}|(\tau)\|_{L_{x}^{\infty}} & \le\sum_{k,j}\||R_{2;j}||\na R_{1;k}|(\tau)\|_{L_{x}^{\infty}}\\
 & \lesssim\sum_{k,j}\oa_{j}^{\fc 1{\apa_{1}}}\sa_{k}^{\fc 1{\apa_{1}}}\langle u_{k}\rangle e^{-av_{*}\tau}\\
 & \lesssim A_{2;\infty}B_{1;\infty}e^{-av_{*}\tau}.
\end{align*}
By interpolation we get
\[
\||T_{2}||\na T_{1}|(\tau)\|_{L_{x}^{2}}\lesssim A_{2;s_{2}(4-\apa_{1})/\apa_{1},s_{2}}^{s_{2}/2}B_{1;s_{2}(4-\apa_{1})/(4-2\apa_{1})}^{s_{2}/2}A_{2;\infty}^{1-s_{2}/2}B_{1;\infty}^{1-s_{2}/2}e^{-a(1-s_{2}/2)v_{*}\tau}.
\]
Suppose 
\begin{equation}
a(1-s_{2}/2)v_{*}\ge\la,
\end{equation}
%$a(1-s_{2}/2)v_{*}\ge\la$, 
then we get 
\[
\||T_{2}||\na T_{2}|\|_{N(t)}\lesssim A_{2;s_{2}(4-\apa_{1})/\apa_{1},s_{2}}^{s_{2}/2}B_{1;s_{2}(4-\apa_{1})/(4-2\apa_{1})}^{s_{2}/2}A_{2;\infty}^{1-s_{2}/2}B_{1;\infty}^{1-s_{2}/2}\la^{-1}e^{-\la t}.
\]

\textbf{Part 3. Estimate of $\|\na H_{2}\|_{N(t)}$.} Choose $2>s_{3}>\fc{2\apa_{1}}{\apa_{1}+2}$.
By Lemma \ref{lem:imp} (H1), we get 
\[
\|\na H_{2}(\tau)\|_{L_{x}^{2}}\lesssim(\sum_{i=1,2}A_{2;\apa_{i}q}^{\apa_{i}}B_{2;p})^{s_{3}/2}(\sum_{i=1,2}A_{2;\infty}^{\apa_{i}}B_{2;\infty})^{1-s_{3}/2}e^{-a\min(\apa_{1},1)(1-s_{3}/2)v_{*}\tau},
\]
where $p,q$ are arbitrary numbers in $(0,\infty]$ satisfying $\fc 1q+\fc 1p=\fc 1{s_{3}}$.
Since 
\[
\fc 1{s_{3}}<\fc 1{\apa_{1}}+\fc 12=1+(\fc 1{\apa_{1}}-\fc 12),
\]
we can choose $p,q$ such that $\fc 1q<1$ and $\fc 1p<\fc 1{\apa_{1}}-\fc 12$.
Thus $\apa_{1}q\in\ml C_{A}^{(2)}$ and $p\in\ml C_{B}^{(2)}$. Suppose
\begin{equation}
a\min(\apa_{1},1)(1-s_{3}/2)v_{*}\ge\la,
\end{equation}
then we get
\[
\|\na H_{2}\|_{N(t)}\lesssim(\sum_{i=1,2}A_{2;\apa_{i}q}^{\apa_{i}}B_{2;p})^{s_{3}/2}(\sum_{i=1,2}A_{2;\infty}^{\apa_{i}}B_{2;\infty})^{1-s_{3}/2}\la^{-1}e^{-\la t}.
\]

Combining all three parts, we get
\[
\|\na\Phi\eta\|_{S(t)}\lesssim \|\na G\|_{N(t)}+\|\na H_{1}\|_{N(t)}
+\|\na H_{2}\|_{N(t)}\lesssim \la^{-\mu}e^{-\lambda t} 
\]
for some $\mu>0$.
\end{proof}

\subsection{Construction of 1D-2D-3D trains \label{sub:PLP}}

In this subsection,
as our last main result, we construct 1D-2D-3D trains of the form
\begin{equation}
u=
T_{1}+\eta_{1}+T_{2}+\eta_{2}+T_{3}+\eta,\label{eq:3Din}
\end{equation}
where $T_{1}=T_{1}(t,x_{1})$, $T_{2}=T_{2}(t,x_{1},x_{2})$, and
$T_{3}=T_{3}(t,x)$ ($x=(x_{1},x_{2},x_{3})$) are 1D, 2D, and 3D
soliton train profiles respectively, with 
initial positions of all the solitons being the origin.
$\eta_{1}=\eta_{1}(t,x_{1})$
and $\eta_{2}=\eta_{2}(t,x_{1},x_{2})$ is such that $T_{1}+\eta_{1}+T_{2}+\eta_{2}$
is an 1D-2D mixed train (the fact that $T_{1}+\eta_{1}$ is itself
an 1D train will not be explicitly needed later), and $\eta=\eta(t,x)$
is the remaining error to be found. To be precise, let $T_{1}=\sum_{k}R_{1;k}$,
where $R_{1;k}$ have frequencies $\sa_{1;k}$ and velocities $(u_{1;k},0,0)$;
$T_{2}=\sum_{k}R_{2;k}$, where $R_{2;k}$ have frequencies $\sa_{2;k}$
and velocities $(u_{2;k,1},u_{2;k,2},0)$; and $T_{3}=\sum_{j}R_{3;j}$,
where $R_{3;j}$ has frequencies $\oa_{j}$ and velocities $(v_{j,1},v_{j,2},v_{j,3})$.
And we define 
\EQ{
v_{*}=\min(v_{*}(T_{1}),v_{*}(T_{2}),v_{*}(T_{3}),v_{*}(T_{1},T_{2}),v_{*}(T_{1},T_{3}),v_{*}(T_{2},T_{3})),
}
where the numbers in the min are defined by \eqref{eq:vstar} and
\eqref{eq:vstar_inter}.

\eqref{eq:3Din} can be visualized as a plane-line-point soliton train
in 3D space. It turns out to be the only mixed trains involving more
than two dimensions that we can construct. To see this, we first give
a discussion on the control of lower dimensional errors. 

As we stressed, supremum controls in $x$ for lower dimensional objects
are necessary in constructing mixed trains. For the previous theorems
on $e$D-$d$D trains, we use controls of the form 
\begin{equation}
\|\eta_{e}(t)\|_{L_{x}^{\infty}}\le e^{-\la t}\label{eq:purex}
\end{equation}
established in Section \ref{sec:single}. In fact, it is also possible
to use space-time controls of the form
\begin{equation}
\|\eta_{e}\|_{L_{t}^{p}L_{x}^{\infty}(t)}\le e^{-\la t},\label{eq:txctrl}
\end{equation}
for suitable $p$. In 1D space, since $(4,\infty)\in\ml A^{(1)}$,
we can obtain $\|\eta_{1}\|_{L_{t}^{4}L_{x}^{\infty}(t)}$ control
by constructing $T_{1}+\eta_{1}$ such that $\|\eta_{1}\|_{S(t)}$
has exponential decay in $t$. For $e=2,3$, since $r_{\max}^{(e)}>e$
(recall \eqref{eq:rMqm}), \eqref{eq:txctrl} can be obtained from
the exponential decay of $\|\na\eta_{e}\|_{S(t)}$ and some $\|\eta_{e}\|_{L_{t}^{q}L_{x}^{2}(t)}$
(e.g. \eqref{eq:norm2}) by Gagliardo-Nirenberg's inequality. For
$e\ge 4$, \eqref{eq:txctrl} is not available (unless controls
of even higher order derivatives of $\eta_{e}$ are considered,
which we did not pursue).

There is actually no definite reason we followed a route of using
\eqref{eq:purex} but not \eqref{eq:txctrl} in constructing $e$D-$d$D
trains. As to mixed trains involving more than two dimensions, all
the lower dimensional errors have to have spatial supremum controls.
As a consequence, thanks to Theorem \ref{thm:mixed1}, one sees that
\eqref{eq:3Din} becomes the only possible case, where we have type
\eqref{eq:purex} control of $\eta_{1}$ and type \eqref{eq:txctrl}
control of $\eta_{2}$. The details will be given in the proof of
Theorem \ref{thm:1D2D3D}. 

Since $T_{1}+\eta_{1}+T_{2}+\eta_{2}$ is assumed to be a solution,
the source term of $\Phi$ with respect to \eqref{eq:3Din} becomes
\[
f(T_{1}+\eta_{1}+T_{2}+\eta_{2}+T_{3}+\eta)-f(T_{1}+\eta_{1}+T_{2}+\eta_{2})-\sum_{j}f(R_{3;j}).
\]
We will write it as $G+H_{1}+H_{2}$, where
\begin{align*}
G & =f(T_{1}+\eta_{1}+T_{2}+\eta_{2}+T_{3}+\eta)-f(T_{1}+\eta_{1}+T_{2}+\eta_{2}+T_{3}),\\
H_{1} & =f(T_{1}+\eta_{1}+T_{2}+\eta_{2}+T_{3})-f(T_{1}+\eta_{1}+T_{2}+\eta_{2})-f(T_{3}),\\
H_{2} & =f(T_{3})-\sum_{j}f(R_{3;j}).
\end{align*}
Our main theorem is the following. 
\begin{thm}
\label{thm:1D2D3D} Let $d=3$, and $f$ satisfy Assumptions
\ref{assuF}, \hyperref[assuT]{(T)$_1$}, 
\hyperref[assuT]{(T)$_2$}, and \hyperref[assuT]{(T)$_3$}.
Suppose $1\le\apa_{1}< 4/3$ and 
$\apa_1\le\apa_{2}\le 4/3$. For any finite $\rho,t_0>0$, 
there is a constant $\la_{0}>0$ such that the following
holds: For $\la_{0}\le\la<\infty$, there exist solutions of \eqref{eq:NLSE} of the form \eqref{eq:3Din},
such that 
\begin{align}
\sup_{t\ge t_0}e^{\la t}\left\{ \|\eta_{1}(t)\|_{H_{x_{1}}^{1}\cap W_{x_{1}}^{1,\infty}}+\|\eta_2\|_{S(t)}+\|\na\eta_2\|_{S(t)}+\|\eta\|_{S(t)}\right\} \le\rho.
\end{align}
\end{thm}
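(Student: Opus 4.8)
The plan is to realize \eqref{eq:3Din} by the same fixed point scheme used for Theorem~\ref{thm:mixed0}, now in dimension $d=3$ and with the two lower dimensional errors $\eta_1,\eta_2$ already packaged into a known solution. First I would apply Theorem~\ref{thm:mixed1} with $e=1$, $d=2$ — this is precisely where $1\le\apa_1<4/3$ is needed — to obtain $T_1+\eta_1+T_2+\eta_2$, a solution of \eqref{eq:NLSE} in $\R^2_{x_1,x_2}$, with $\|\eta_1(t)\|_{H^1_{x_1}\cap W^{1,\infty}_{x_1}}$, $\|\eta_2\|_{S(t)}$ and $\|\na\eta_2\|_{S(t)}$ all $\lesssim e^{-\la t}$ for $t\ge 0$ once $\la$ is large. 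The $W^{1,\infty}_{x_1}$ bound is a control of type \eqref{eq:purex} for $\eta_1$; from $\|\eta_2\|_{S(t)}+\|\na\eta_2\|_{S(t)}$ I derive a control of type \eqref{eq:txctrl}, namely $\|\eta_2\|_{L^6_tL^\infty_x(t)}\lesssim e^{-\la t}$, via the Gagliardo--Nirenberg inequality in $\R^2$ (using $r_{\max}^{(2)}=4>2$, with exponent $\fc23$ at $L^4$) and interpolation in time. It then remains to find $\eta=\eta(t,x)$, $x\in\R^3$, as a fixed point of $\Phi$ with source term $G+H_1+H_2$ as displayed before the theorem, and since $\|\Phi\eta\|_{S(t)}\lesssim\|G+H_1+H_2\|_{N(t)}$ it suffices to make $\Phi$ a contraction on the closed unit ball of $S_{\la,t_0}$ for $\la$ large. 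As in the previous proofs I would display only the self-mapping estimates; because $d\le 3$, no gradient control of $\eta$ is required.

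For $G=f(W+\eta)-f(W)$ with $W=T_1+\eta_1+T_2+\eta_2+T_3$, \eqref{eq:fineq0} gives $|G|\lesssim\sum_i(|\eta||W|^{\apa_i}+|\eta|^{\apa_i+1})$. Using $\apa_i\ge1$ I bound $|W|^{\apa_i}$ by a sum of $|T_1|^{\apa_i},|\eta_1|^{\apa_i},|T_2|^{\apa_i},|\eta_2|^{\apa_i},|T_3|^{\apa_i}$: each soliton piece $|T_m|^{\apa_i}$ is bounded in $L^\infty_tL^\infty_x$ (by $A_{m;\infty}^{\apa_i}$) and each $|\eta_1|^{\apa_i}$ piece has extra time decay, so multiplied by $|\eta|$ and estimated in $L^1_tL^2_x$ they yield $\lesssim\la^{-1}e^{-\la t}$ from $\|\eta\|_{L^1_tL^2_x(t)}\lesssim\la^{-1}e^{-\la t}$; the piece $|\eta||\eta_2|^{\apa_i}$ is estimated by H\"older in time, pairing $\|\eta\|_{L^a_tL^2_x(t)}$ with $\|\eta_2\|^{\apa_i}_{L^6_tL^\infty_x(t)}$ for $\fc1a+\fc{\apa_i}6=1$, legitimate since $\apa_i\le4/3<6$. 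For $\||\eta|^{\apa_i+1}\|_{N(t)}=\||\eta||\eta|^{\apa_i}\|_{N(t)}$ I invoke Lemma~\ref{lem:main}~(N0), valid since $\apa_i\le4/3=4/d$; at the endpoint $\apa_2=4/3$ the $\la$-power there is $\la^0$, so the needed smallness must come from the factor $e^{-\apa_i\la t_0}$, which is exactly why $t_0>0$ is assumed ($t_0=0$ being allowed if $\apa_2<4/3$).

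For $H_1=f(W_0+T_3)-f(W_0)-f(T_3)$ with $W_0=T_1+\eta_1+T_2+\eta_2$, Corollary~\ref{cor:decomp} gives $|H_1|\lesssim\sum_i(|W_0|^{\apa_i+\ta_i}|T_3|^{1-\ta_i}+|W_0|^{1-\phi_i}|T_3|^{\apa_i+\phi_i})$ for any $\ta_i,\phi_i\in[0,1]$. Writing $|W_0|\le|T_1|+|\eta_1|+|T_2|+|\eta_2|$, every term carrying a factor of $\eta_1$ or $\eta_2$ decays in time (bound $\eta_1,\eta_2$ pointwise by their $L^\infty$ norms, $T_3$ staying bounded) and hence pairs with $L^1_t$; the remaining pure-soliton cross terms $|T_1|^{b}|T_3|^{c}$ and $|T_2|^{b}|T_3|^{c}$ are treated exactly like $\||T_e||T_d|^\ga\|_{N(t)}$ in the proof of Theorem~\ref{thm:mixed0}: exponential decay of the $L^\infty_x$ norm by singling out, at each $(t,x)$, the nearest soliton center and bounding the resulting exponent below by $a\min(1,\apa_1)v_*t$; boundedness of the $L^s_x$ norm for some $s<2$ by anisotropic H\"older (this is where $\apa_1\le\apa_2\le4/3$ and the small dimensions $1,2,3$ enter, in choosing $\ta_i,\phi_i$ and the splitting weights, cf.\ the optimization of $\ta$ in \eqref{eq:Thm3_vta}); interpolation then yields $L^2_x$ exponential decay and an $L^1_tL^2_x(t)$ bound once $v_*$ is large. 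Finally $H_2=f(T_3)-\sum_j f(R_{3;j})$ is handled directly by Lemma~\ref{lem:imp}~(H0) with $s_2\in(\fc{3\apa_1}{2(\apa_1+1)},2)$ (nonempty for every $\apa_1$ since $3\apa_1<4(\apa_1+1)$), giving $\|H_2(\tau)\|_{L^2_x}\lesssim(\ldots)e^{-a(1-s_2/2)v_*\tau}$ with $(\apa_1+1)s_2\in\ml C_A^{(3)}$.

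Assembling these, if $v_*$ is chosen large enough that every $e^{-cv_*\tau}$ factor dominates $e^{-\la\tau}$, one gets $\|\Phi\eta\|_{S(t)}\lesssim E\la^{-\mu}e^{-\la t}$ for some $\mu>0$ and a coefficient $E$ built from finitely many of the quantities $A_{\cdot;\cdot},B_{\cdot;\cdot}$ attached to the three soliton families. By Lemma~\ref{lem:compete} and its anisotropic extension the frequencies $\{\sa_{1;k}\},\{\sa_{2;k}\},\{\oa_j\}$ and velocities $\{u_{1;k}\},\{u_{2;k}\},\{v_j\}$ can be chosen so that all these $A,B$ are as small as we wish while $v_*$ exceeds any prescribed bound; fixing $\la\ge\la_0$ large then makes $\Phi$ a self-map of, and a contraction on, the unit ball of $S_{\la,t_0}$ (the contractivity estimates being the same without the $H$-terms), and the contraction mapping principle finishes the proof. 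I expect the only genuine obstacle to be the bookkeeping of the many cross terms — always pairing the single time-decaying factor ($\eta$, $\eta_1$ or $\eta_2$) with $L^1_t$ or $L^a_t$, everything else with $L^\infty_t$, while keeping all $L^p$ and $L^{p,q}$ indices inside the controllable classes $\ml C_A,\ml C_B$; conceptually nothing new appears beyond Theorems~\ref{thm:mixed0} and~\ref{thm:mixed1}, and the reason \eqref{eq:3Din} is the last reachable mixed case is precisely that it requires a $W^{1,\infty}_{x_1}$ bound for $\eta_1$ (from Theorem~\ref{thm:single2}, forcing $\apa_1\ge1$) and an $S(t)+\na S(t)$ bound for $\eta_2$ (from Theorem~\ref{thm:mixed1}, forcing $\apa_1<4/3$), i.e.\ spatial supremum control of both lower dimensional errors.
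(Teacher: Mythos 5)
Your proposal is correct and follows essentially the same route as the paper: Theorem \ref{thm:mixed1} supplies the 1D-2D train, Gagliardo--Nirenberg converts the $S(t)$ and $\nabla S(t)$ controls of $\eta_2$ into the $L^6_tL^\infty_{x'}$ bound \eqref{eq:t6if}, and the fixed point for $\eta$ in $S_{\la,t_0}$ is closed by the same $G+H_1+H_2$ splitting, with Lemma \ref{lem:main} (N0) for $|\eta|^{\apa_i+1}$, Corollary \ref{cor:decomp} plus anisotropic interpolation for the cross terms, Lemma \ref{lem:imp} (H0) for $H_2$, and Lemma \ref{lem:compete} to tune $\{\oa_j\},\{v_j\}$. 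The only point to make fully explicit is that in the terms $|\eta_1|^{\ba}|T_3|^{\ga}$ and $|\eta_2|^{\ba}|T_3|^{\ga}$ the spatial $L^2_x(\mb R^3)$ integrability must be carried entirely by $|T_3|^{\ga}$ (the lower dimensional errors having no decay in $x''$), which forces $2\ga\in\ml C_A^{(3)}$ and is precisely where $\apa_1<4/3$ and the bound $d\le 3$ enter.
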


\begin{rem*}
We can take $t_0=0$ if $\alpha_2 <4/3$, 
or if $\rho$ is sufficiently small. The highest dimension 
cannot be larger than $3$ in order to estimate terms of the
form $\||\eta_{1}+\eta_2|^{\ba}|T_{3}|^{\ga}\|_{N(t)}$
for $\ga=1$ and $\apa_1$ ($\ba>0$ is irrelevant). 
\end{rem*}

\begin{proof}
For $\la$ no less than some positive number, 
Theorem \ref{thm:mixed1} implies the existence of an 1D-2D 
train $T_{1}+\eta_{1}+T_{2}+\eta_{2}$ such that
\begin{equation}
\|\eta_{1}(t)\|_{H_{x_{1}}^{1}\cap W_{x_{1}}^{1,\infty}}
%\|\eta_{1}(t)\|_{L_{x_{1}}^{2}\cap L_{x_{1}}^{\infty}}
\le e^{-\la t},\label{eq:ctrlerr1}
\end{equation}
and 
\begin{equation}
\|\eta_{2}\|_{S(t)}+\|\na\eta_{2}\|_{S(t)}\le e^{-\la t}.\label{eq:ctrlerr2}
\end{equation}
We'll not exploit gradient estimates in this proof, 
and hence we don't need the control of $\na\eta_{1}$. 
The control for $\na\eta_{2}$ is needed merely to induce a type \eqref{eq:txctrl}
control of $\eta_{2}$, as we show now. Denote $x'=(x_{1},x_{2})$.
From \eqref{eq:ctrlerr2}, we have $\|\eta_{2}\|_{L_{t}^{\infty}L_{x'}^{2}(t)}\le e^{-\la t}$
and $\|\na\eta_{2}\|_{L_{t}^{4}L_{x'}^{4}}\le e^{-\la t}$. By the
Gagliardo-Nirenberg's inequality (cf. \eqref{eq:lbest}), for $0<p\le\infty$,
\begin{align*}
\|\eta_{2}\|_{L_{t}^{p}L_{x'}^{\infty}} & \lesssim\left\Vert \|\eta_{2}\|_{L_{x'}^{2}}^{1/3}\|\na\eta_{2}\|_{L_{x'}^{4}}^{2/3}\right\Vert _{L_{t}^{p}}\\
 & \le\left\Vert \|\eta_{2}\|_{L_{x'}^{2}}^{1/3}\right\Vert _{L_{t}^{\infty}}\left\Vert \|\na\eta_{2}\|_{L_{x'}^{4}}^{2/3}\right\Vert _{L_{t}^{p}}\\
 & =\|\eta_{2}\|_{L_{t}^{\infty}L_{x'}^{2}}^{1/3}\|\na\eta_{2}\|_{L_{t}^{2p/3}L_{x'}^{4}}^{2/3}.
\end{align*}
Letting $p=6$, we get 
\begin{equation}
\|\eta_{2}\|_{L_{t}^{6}L_{x'}^{\infty}(t)}\lesssim e^{-\la t}.\label{eq:t6if}
\end{equation}

Suppose $\eta\in S_{\la,t_{0}}$, $\|\eta\|_{S_{\la,t_{0}}}\le1$. We will
derive the suitable estimates of $\|G\|_{N(t)}$, $\|H_{1}\|_{N(t)}$
and $\|H_{2}\|_{N(t)}$ for $\|\Phi\eta\|_{S_{\la,t_{0}}}\le1$ in
the following.

\medskip
\textbf{Part 1. Estimate of $\|G\|_{N(t)}$.} By \eqref{eq:fineq0},
\begin{align*}
|G| 
%& \lesssim\sum_{i=1,2}\left\{ |\eta||T_{1}+\eta_{1}+T_{2}+\eta_{2}+T_{3}|^{\apa_{i}}+|\eta|^{\apa_{i}+1}\right\} \\
  \lesssim\sum_{i=1,2}\left\{ |\eta||T_{1}+\eta_{1}+T_{2}+T_{3}|^{\apa_{i}}+|\eta||\eta_{2}|^{\apa_{i}}+|\eta|^{\apa_{i}+1}\right\} .
\end{align*}
For the first term, by \eqref{eq:ctrlerr1},
\begin{align*}
\||\eta||T_{1}+\eta_{1}+T_{2}+T_{3}|^{\apa_{i}}\|_{L_{t}^{1}L_{x}^{2}(t)} & \le\|\eta\|_{L_{t}^{1}L_{x}^{2}(t)}\|T_{1}+\eta_{1}+T_{2}+T_{3}\|_{L_{t}^{\infty}L_{x}^{\infty}}^{\apa_{i}}\\
 & \lesssim(A_{1;\infty}+e^{-\la t_{0}}+A_{2;\infty}+A_{3;\infty})^{\apa_{i}}\la^{-1}e^{-\la t}.
\end{align*}
For the second, by \eqref{eq:t6if}, we have 
\begin{align*}
\||\eta||\eta_{2}|^{\apa_{i}}\|_{L_{t}^{1}L_{x}^{2}(t)} & \le\|\eta\|_{L_{t}^{6/(6-\apa_{i})}L_{x}^{2}(t)}\||\eta_{2}|^{\apa_{i}}\|_{L_{t}^{6/\apa_{i}}L_{x}^{\infty}(t)}\\
 & \lesssim(\la^{-(6-\apa_{i})/6}e^{-\apa_{i}\la t_{0}})e^{-\la t}.
\end{align*}
For the third, since $\apa_{2}\le4/3$, Lemma \ref{lem:main} (N0)
implies
\[
\||\eta|^{\apa_{i}+1}\|_{N(t)}\lesssim(\la^{-1+3\apa_{i}/4}e^{-\apa_{i}\la t_{0}})e^{-\la t}.
\]
(If $\apa_{2}=4/3$, for $i=2$ the smallness of the coefficient 
by letting $\la$ large relies on the assumption $t_0>0$.) 

\medskip

\textbf{Part 2. Estimate of $\|H_{1}\|_{N(t)}$.} Let $W=T_{1}+\eta_{1}+T_{2}+\eta_{2}$.
By Corollary \ref{cor:decomp},
\[
|H_{1}|=|f(W+T_{3})-f(W)-f(T_{3})|\lesssim\sum_{i=1,2}(|W|^{\apa_{i}}|T_{3}|+|W||T_{3}|^{\apa_{i}}).
\]
Thus we have to estimate (1) $|\eta_{1}|^{\ba}|T_{3}|^{\ga}$, (2)
$|\eta_{2}|^{\ba}|T_{3}|^{\ga}$, (3) $|T_{1}|^{\ba}|T_{3}|^{\ga}$,
and (4) $|T_{2}|^{\ba}|T_{3}|^{\ga}$, for $(\ba,\ga)=(\apa_{i},1)$
and $(1,\apa_{i})$. Notice that by assumption both 
$\ba,\ga\ge1$ in any case.

\emph{Estimate (1). }
\[
\||\eta_{1}|^{\ba}|T_{3}|^{\ga}\|_{L_{t}^{1}L_{x}^{2}(t)}\le\||\eta_{1}|^{\ba}\|_{L_{t}^{1}L_{x}^{\infty}(t)}\||T_{3}|^{\ga}\|_{L_{t}^{\infty}L_{x}^{2}(t)}\lesssim A_{3;2\ga}^{\ga}\|\eta_{1}\|_{L_{t}^{\ba}L_{x}^{\infty}(t)}^{\ba},
\]
where $2\ga\in\ml C_{A}^{(3)}$ since $\ga\ge 1$ and $\apa_{1}<4/3$.
(Notice that, for $\ga=1$ and $\apa_1$,
$2\ga\notin\ml{C}_A^{(d)}$ if $d\ge 4$. This is why we can only
consider $3$ as the highest dimension.)
By \eqref{eq:ctrlerr1},
\[
\|\eta_{1}\|_{L_{t}^{\ba}L_{x}^{\infty}(t)}^{\ba}\le\int_{t}^{\infty}e^{-\ba\la\tau}\, d\tau=(\ba\la)^{-1}e^{-\ba\la t}.
\]
Hence 
\[
\||\eta_{1}|^{\ba}|T_{3}|^{\ga}\|_{L_{t}^{1}L_{x}^{2}(t)}\lesssim(A_{3;2\ga}^{\ga}\la^{-1}e^{-(\ba-1)\la t_{0}})e^{-\la t}.
\]

\emph{Estimate (2).} As above we get 
\[
\||\eta_{2}|^{\ba}|T_{3}|^{\ga}\|_{L_{t}^{1}L_{x}^{2}(t)}\lesssim A_{3;2\ga}^{\ga}\|\eta_{2}\|_{L_{t}^{\ba}L_{x}^{\infty}}^{\ba}.
\]
Let $u(\tau)=\|\eta_{2}(\tau)\|_{L_{x}^{\infty}}$. \eqref{eq:t6if}
says $\|u\|_{L^{6}([t,\infty))}\lesssim e^{-\la t}$. Since $\ba<6$,
Proposition \ref{prop:subadest} implies 
\[
\|\eta_{2}\|_{L_{t}^{\ba}L_{x}^{\infty}(t)}=\|u\|_{L^{\ba}([t,\infty))}\lesssim\la^{\fc 16-\fc 1{\ba}}e^{-\la t}.
\]
Thus 
\[
\||\eta_{2}|^{\ba}|T_{3}|^{\ga}\|_{L_{t}^{1}L_{x}^{2}(t)}\lesssim(A_{3;2\ga}^{\ga}\la^{-1+\ba/6}e^{-(\ba-1)\la t_{0}})e^{-\la t}.
\]

\emph{Estimate (3).} Since
\[
|T_{1}|^{\apa_{i}}|T_{3}|+|T_{1}||T_{3}|^{\apa_{i}}\lesssim|T_{1}||T_{3}|(|T_{1}|+|T_{3}|)^{\apa_{i}-1}
\]
and 
\[
\|(|T_{1}|+|T_{3}|)^{\apa_{i}-1}\|_{L_{t}^{\infty}L_{x}^{\infty}(t)}\lesssim(A_{1;\infty}+A_{3;\infty})^{\apa_{i}-1},
\]
it suffices to estimate $\||T_{1}||T_{3}|\|_{N(t)}$.

\emph{Step 1.} For $s\in(0,\infty]$ and $\ta\in[0,1]$, with $x''=(x_2,x_3)$,
\begin{equation}
\||T_{1}||T_{3}|\|_{L_{x}^{s}}\le\|T_{1}\|_{L_{x_{1}}^{s/\ta}L_{x''}^{\infty}}\|T_{3}\|_{L_{x_{1}}^{s/(1-\ta)}L_{x''}^{s}}\lesssim A_{1;s/\ta}A_{3;s/(1-\ta),s}.\label{eq:Thm5_T1T3s}
\end{equation}
Here $A_{3;s/(1-\ta),s}$ is with respect to $(e,d)=(1,3)$.
We need $s/\ta\in\ml C_{A}^{(1)}$ and 
$(s/(1-\ta),s)\in\ml C_{A}^{(1,2)}$, that is
\[
s>\max\left(\fc{\ta\apa_{1}}2,\fc{(3-\ta)\apa_{1}}2\right).
\]
The ``max'' is minimized by letting $\ta=1$, which gives 
$s>\apa_{1}$. Since $\apa_{1}<2$, \eqref{eq:Thm5_T1T3s} gives
\begin{equation}
\||T_{1}||T_{3}|\|_{L_{x}^{s_1}}\lesssim 
A_{1;s_{1}}A_{3;\infty,s_{1}}\label{eq:Thm5_T1T3s1}
\end{equation}
for some $s_{1}<2$, with $s_1\in \ml{C}_{A}^{(1)}$ and 
$(\infty,s_{1})\in \ml{C}_{A}^{(1,2)}$.

\textit{Step 2.} Following the derivation of
\eqref{eq:T3_H1_2-4-1}, we get 
\begin{equation}
\||T_{1}||T_{3}|(\tau)\|_{L_{x}^{\infty}}\le\sum_{k,j}|R_{1;k}||R_{3;j}|(\tau)\lesssim A_{1;\infty}A_{3;\infty}e^{-av_{*}\tau}.\label{eq:Thm5_T1T3i}
\end{equation}

From \eqref{eq:Thm5_T1T3s1} and \eqref{eq:Thm5_T1T3i}, we get 
\[
\||T_{1}||T_{3}|(\tau)\|_{L_{x}^{2}}\lesssim A_{1;s_{1}}^{s_{1}/2}A_{3;\infty,s_{1}}^{s_{1}/2}A_{1;\infty}^{1-s_{1}/2}A_{3;\infty}^{1-s_{1}/2}e^{-a(1-s_{1}/2)v_{*}\tau}.
\]
Suppose 
\EQ{
a(1-s_{1}/2)v_{*}\ge\la,
}
we get 
\[
\||T_{1}||T_{3}|\|_{N(t)}\lesssim A_{1;s_{1}}^{s_{1}/2}A_{3;\infty,s_{1}}^{s_{1}/2}A_{1;\infty}^{1-s_{1}/2}A_{3;\infty}^{1-s_{1}/2}\la^{-1}e^{-\la t}.
\]

\emph{Estimate (4).} As above, it suffices to estimate $\||T_{2}||T_{3}|\|_{N(t)}$.
Let $x'=(x_{1},x_{2})$. For $s\in(0,\infty]$ and $\ta\in[0,1]$,
\begin{equation}
\||T_{2}||T_{3}|\|_{L_{x}^{s}}\le\|T_{2}\|_{L_{x'}^{s/\ta}L_{x_{3}}^{\infty}}\|T_{3}\|_{L_{x'}^{s/(1-\ta)}L_{x_{3}}^{s}}\lesssim A_{2;s/\ta}A_{3;s/(1-\ta),s}.\label{eq:Thm5_T2T3s}
\end{equation}
Here $A_{3;s/(1-\ta),s}$ is with respect to $(e,d)=(2,3)$.
For $s/\ta\in\ml C_{A}^{(2)}$ and $(s/(1-\ta),s)\in\ml C_{A}^{(2,1)}$,
we need 
\[
s>\max\left(\apa_{1}\ta,(\fc 32-\ta)\apa_{1}\right).
\]
The ``max'' is minimized by letting $\ta=3/4$, which gives $s>\fc{3\apa_{1}}4$.
The lower bound is less than $2$. Thus there is $s_{2}<2$ such that
\[
\||T_{2}||T_{3}|\|_{L_{x}^{s_{2}}}\lesssim A_{2;4s_{2}/3}A_{3;4s_{2},s_{2}},
\]
with $4s_{2}/3\in\ml C_{A}^{(2)}$ and $(4s_{2},s_{2})\in\ml C_{A}^{(2,1)}$.

By \eqref{eq:T3_H1_2-4-1},
%\textit{Step 2.} As in Estimate (3),
we have 
\[
\||T_{2}||T_{3}|(\tau)\|_{L_{x}^{\infty}}\lesssim A_{2;\infty}A_{3;\infty}e^{-av_{*}\tau}.
\]

By interpolation we get the $L_{x}^{2}$ estimate. And if $a(1-s_{2}/2)v_{*}\ge\la$,
we get 
\[
\||T_{2}||T_{3}|\|_{N(t)}\lesssim A_{2;4s_{2}/3}^{s_{2}/2}A_{3;4s_{2},s_{2}}^{s_{2}/2}A_{2;\infty}^{1-s_{2}/2}A_{3;\infty}^{1-s_{2}/2}\la^{-1}e^{-\la t}.
\]

\textbf{Part 3. Estimate of $\|H_{2}\|_{N(t)}$.} Choose $2>s_{3}>\fc{3\apa_{1}}{2(\apa_{1}+1)}$.
By Lemma \ref{lem:imp} (H0), 
\[
\|H_{2}(\tau)\|_{L_{x}^{2}}\lesssim
(\sum_{i=1,2}A_{3;(\apa_{i}+1)s_3}^{\apa_{i}+1})^{s_{3}/2}(\sum_{i=1,2}A_{3;\infty}^{\apa_{i}+1})^{1-s_{3}/2}e^{-a(1-s_{3}/2)v_{*}\tau},
\]
with $(\apa_{1}+1)s_{3}\in\ml C_{A}^{(3)}$. 
Suppose $a(1-s_{3}/2)v_{*}\ge\la$, we get 
\[
\|H_{2}\|_{L_{t}^{1}L_{x}^{2}(t)}\lesssim
(\sum_{i=1,2}A_{3;(\apa_{i}+1)s_3}^{\apa_{i}+1})^{s_{3}/2}(\sum_{i=1,2}A_{3;\infty}^{\apa_{i}+1})^{1-s_{3}/2}\la^{-1}e^{-\la t}.
\]

Combining all three parts, we see $\|\Phi\eta\|_{S(t)}\le e^{-\la t}$
for $\la$ large enough with suitable frequencies and velocities 
of the solitons.

\end{proof}

\begin{appendices}

\section{}\label{appI}

We prove Lemma \ref{lem:compete} in this appendix. 
We will consider slightly more general forms of
the assertions, so that they actually cover the 
anisotropic cases used in Section \ref{sec:MixedT}. 

For $p_{1},p_{2}\in(0,\infty)$, 
sequence $\{\oa_{j}\}_{j\in\mb N}$ in $(0,\oa_{*})$, 
and sequence $\{v_{j}\}_{j\in\mb N}$ in $\mb R^{d}$,
define 
\begin{align*}
\wt A_{p_{1},p_{2}} & =\wt A_{p_{1},p_{2}}(\{\oa_{j}\})\coloneqq(\sum_{j}\oa_{j}^{p_{1}p_{2}})^{1/p_{1}}\\
\wt B_{p_{1},p_{2}} & =\wt B_{p_{1},p_{2}}(\{\oa_{j}\},\{v_{j}\})\coloneqq(\sum_{j}\langle v_{j}\rangle^{p_{1}}\oa_{j}^{p_{1}p_{2}})^{1/p_{1}}.
\end{align*}

\begin{prop*}
Given $0<p_{1}\le q_{1}<\infty$ and $0<p_{2}<q_{2}<\infty$. We have
\begin{align*}
\wt A_{q_{1},q_{2}} & <\oa_{*}^{q_{2}-p_{2}}\wt A_{p_{1},p_{2}},\quad\mbox{if}\quad\wt A_{q_{1},q_{2}}<\infty,\\
\wt B_{q_{1},q_{2}} & <\oa_{*}^{q_{2}-p_{2}}\wt B_{p_{1},p_{2}},\quad\mbox{if}\quad\wt B_{q_{1},q_{2}}<\infty.
\end{align*}
\end{prop*}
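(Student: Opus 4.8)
The plan is to prove both inequalities in one stroke by the same two moves: \textbf{(i)} lower the second index from $q_2$ to $p_2$, at the cost of a factor $\oa_*^{q_2-p_2}$, exploiting $\oa_j<\oa_*$ term by term; and \textbf{(ii)} lower the first index from $q_1$ to $p_1$ using the monotonicity of the $\ell^s$‑(quasi‑)norms in $s$. It is cleanest to rewrite, for $0<s<\infty$ and a sequence $(a_j)$, $\|(a_j)\|_{\ell^s}:=(\sum_j|a_j|^s)^{1/s}$, so that $\wt A_{p_1,p_2}=\|(\oa_j^{p_2})_j\|_{\ell^{p_1}}$, $\wt A_{q_1,q_2}=\|(\oa_j^{q_2})_j\|_{\ell^{q_1}}$, and likewise $\wt B_{p_1,p_2}=\|(\langle v_j\rangle\oa_j^{p_2})_j\|_{\ell^{p_1}}$, $\wt B_{q_1,q_2}=\|(\langle v_j\rangle\oa_j^{q_2})_j\|_{\ell^{q_1}}$ (recall $\langle v_j\rangle\ge1$).

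First I would dispose of the trivial case: if $\wt A_{p_1,p_2}=\infty$ (resp.\ $\wt B_{p_1,p_2}=\infty$) the right‑hand side is $+\infty$, so under the standing hypothesis $\wt A_{q_1,q_2}<\infty$ (resp.\ $\wt B_{q_1,q_2}<\infty$) the claim is immediate. Hence one may assume $\wt A_{p_1,p_2}<\infty$ (resp.\ $\wt B_{p_1,p_2}<\infty$); in particular $\oa_j\to0$ and $(\oa_j^{p_2})_j\in\ell^{p_1}$ (resp.\ $(\langle v_j\rangle\oa_j^{p_2})_j\in\ell^{p_1}$).

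Now the substance. Since $q_2-p_2>0$ and $0<\oa_j<\oa_*$ for every $j$, we have the strict termwise bound $\oa_j^{q_1q_2}=\oa_j^{q_1p_2}\,\oa_j^{q_1(q_2-p_2)}<\oa_*^{q_1(q_2-p_2)}\oa_j^{q_1p_2}$, with each $\oa_j^{q_1p_2}>0$. Because $p_1\le q_1$, the elementary nesting $\|x\|_{\ell^{q_1}}\le\|x\|_{\ell^{p_1}}$ (valid for $0<p_1\le q_1$; normalize $\|x\|_{\ell^{p_1}}=1$, so $|x_j|\le1$ and $|x_j|^{q_1}\le|x_j|^{p_1}$) gives $\sum_j\oa_j^{q_1p_2}=\|(\oa_j^{p_2})\|_{\ell^{q_1}}^{q_1}\le\|(\oa_j^{p_2})\|_{\ell^{p_1}}^{q_1}=\wt A_{p_1,p_2}^{q_1}<\infty$. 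Summing the strict termwise inequality against this \emph{finite} majorant yields the strict inequality $\sum_j\oa_j^{q_1q_2}<\oa_*^{q_1(q_2-p_2)}\sum_j\oa_j^{q_1p_2}$; taking $q_1$‑th roots and then applying the nesting once more, $\wt A_{q_1,q_2}<\oa_*^{q_2-p_2}\big(\sum_j\oa_j^{q_1p_2}\big)^{1/q_1}=\oa_*^{q_2-p_2}\|(\oa_j^{p_2})\|_{\ell^{q_1}}\le\oa_*^{q_2-p_2}\wt A_{p_1,p_2}$. The proof for $\wt B$ is word‑for‑word the same with $\oa_j^{p_2}$ replaced by $\langle v_j\rangle\oa_j^{p_2}$: the termwise bound becomes $\langle v_j\rangle^{q_1}\oa_j^{q_1q_2}<\oa_*^{q_1(q_2-p_2)}\langle v_j\rangle^{q_1}\oa_j^{q_1p_2}$, the majorant $\sum_j\langle v_j\rangle^{q_1}\oa_j^{q_1p_2}$ is finite by the same nesting, and the final step bounds its $q_1$‑th root by $\wt B_{p_1,p_2}$.

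The only point that needs a little attention — and it is the closest thing to an obstacle — is preserving \emph{strictness}: a termwise strict inequality passes to the sum only when the majorising series converges, which is precisely why the reduction to the case $\wt A_{p_1,p_2}<\infty$ (hence $\sum_j\oa_j^{q_1p_2}<\infty$) is carried out at the outset. Everything else is the standard $\ell^s$ monotonicity together with the estimate $\oa_j^{q_2-p_2}<\oa_*^{q_2-p_2}$. (Part (a) of Lemma \ref{lem:compete}, and its anisotropic analogues, then follow by specializing the exponents.)
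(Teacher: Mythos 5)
Your proof is correct and uses the same two ingredients as the paper's — the $\ell^{p_1}\hookrightarrow\ell^{q_1}$ monotonicity (which the paper phrases as subadditivity of $x\mapsto x^{p_1/q_1}$) and the termwise bound $\oa_j<\oa_*$ — merely applied in the opposite order, with a slightly more careful handling of strictness and of the degenerate case $\wt A_{p_1,p_2}=\infty$. No issues.
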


\begin{rem*}
By letting $p_{1}=\min(1,p)$, $p_{2}=\fc 1{\apa_{1}}-\fc d{2p}$,
$q_{1}=\min(1,q)$, $q_{2}=\fc 1{\apa_{1}}-\fc d{2q}$, and notice
that 
\[
\oa_{*}^{q_{2}-p_{2}}\le\max(1,\oa_{*})^{q_{2}-p_{2}}\le\max(1,\oa_{*})^{q_{2}}\le\max(1,\oa_{*})^{1/\apa_{1}},
\]
we get Lemma \ref{lem:compete} (a).
\end{rem*}

\begin{proof}
We have
\begin{align*}
\wt A_{q_{1},q_{2}} 
&=\Bigl[
(\sum_{j}\oa_{j}^{q_{1}q_{2}})^{p_{1}/q_{1}}
\Bigr]^{1/p_{1}}
\le(\sum_{j}\oa_{j}^{p_{1}q_{2}})^{1/p_{1}}
\quad(\mbox{since }p_{1}/q_{1}\le1)\\
&=\oa_{*}^{q_{2}}(\sum_{j}(\oa_{j}/\oa_{*})^{p_{1}q_{2}})^{1/p_{1}}
<\oa_{*}^{q_{2}}(\sum_{j}(\oa_{j}/\oa_{*})^{p_{1}p_{2}})^{1/p_{1}}\quad(\mbox{since }\oa_{j}/\oa_{*}<1)\\
& =\oa_{*}^{q_{2}-p_{2}}\wt A_{p_{1},p_{2}}.
\end{align*}

Similarly, 
\begin{align*}
\wt B_{q_{1},q_{2}} 
 \le(\sum_{j}\langle v_{j}\rangle^{p_{1}}\oa_{j}^{p_{1}q_{2}})^{1/p_{1}}
  <\oa_{*}^{q_{2}}(\sum_{j}\langle v_{j}\rangle^{p_{1}}(\oa_{j}/\oa_{*})^{p_{1}p_{2}})^{1/p_{1}}
  =\oa_{*}^{q_{2}-p_{2}}\wt B_{q_{1},q_{2}}.
\end{align*}

\end{proof}
Let $v_{*}$ be as defined by \eqref{eq:vstar}. 
Lemma \ref{lem:compete} (b) is
a corollary of the following
\begin{prop*}
Given $0<p_{1},q_{1},q_{2}<\infty$ and $1/2<p_{2}<\infty$. For any
constants $c,\La>0$, there exist $\{\oa_{j}\}$ and $\{v_{j}\}$
such that $\wt A_{q_{1},q_{2}},\wt B_{p_{1},p_{2}}\le c$ and $v_{*}\ge\La$. \end{prop*}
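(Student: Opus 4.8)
The plan is to use geometric sequences. Fix the first standard basis vector $e\in\mb R^{d}$, pick a small parameter $\ta\in(0,1)$ to be determined at the end, and set
\[
\oa_{j}=\ta^{\,j},\qquad v_{j}=M\ta^{-j/2}e\qquad(j\in\mb N),
\]
(taking $\mb N=\{1,2,\dots\}$, which costs no generality), where $M>0$ is a scale still to be chosen. The heuristic is that the velocities must spread out roughly like $\oa_{j}^{-1/2}$ so that the weight $\min(1,\oa_{j}^{1/2},\oa_{k}^{1/2})$ in $v_{*}$ does not kill the separation, while at the same time $\langle v_{j}\rangle\sim M\oa_{j}^{-1/2}$ contributes only a factor $\oa_{j}^{-1/2}$ to $\wt B_{p_{1},p_{2}}$, which is overcome by the factor $\oa_{j}^{p_{2}}$ precisely because $p_{2}>\tfrac12$.

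First I would record the separation estimate. For $j<k$ we have $\oa_{j}=\ta^{j}>\ta^{k}=\oa_{k}$, so (using $\ta<1$) $\min(1,\oa_{j}^{1/2},\oa_{k}^{1/2})=\oa_{k}^{1/2}=\ta^{k/2}$, and $|v_{j}-v_{k}|=M(\ta^{-k/2}-\ta^{-j/2})$, whence
\[
\min(1,\oa_{j}^{1/2},\oa_{k}^{1/2})\,|v_{j}-v_{k}|=M\bigl(1-\ta^{(k-j)/2}\bigr)\ge M\bigl(1-\ta^{1/2}\bigr).
\]
By \eqref{eq:vstar} this gives $v_{*}\ge\tfrac12 M(1-\ta^{1/2})$. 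I then impose $\ta\le\tfrac14$ (so $1-\ta^{1/2}\ge\tfrac12$) and set $M=M(\ta):=2\La/(1-\ta^{1/2})$; this forces $v_{*}\ge\La$ and, crucially, keeps $M\le4\La$ no matter how small $\ta$ is chosen afterwards.

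Next I would estimate the two weighted sums. Summing geometric series,
\[
\wt A_{q_{1},q_{2}}=\Bigl(\sum_{j}\ta^{\,jq_{1}q_{2}}\Bigr)^{1/q_{1}}=\Bigl(\frac{\ta^{q_{1}q_{2}}}{1-\ta^{q_{1}q_{2}}}\Bigr)^{1/q_{1}},
\]
which tends to $0$ as $\ta\to0^{+}$; and, using $\langle v_{j}\rangle^{2}=1+M^{2}\ta^{-j}\le(1+M^{2})\ta^{-j}$ (since $\ta^{-j}\ge1$) together with $p_{1}(p_{2}-\tfrac12)>0$,
\[
\wt B_{p_{1},p_{2}}^{\,p_{1}}=\sum_{j}\langle v_{j}\rangle^{p_{1}}\ta^{\,jp_{1}p_{2}}\le(1+M^{2})^{p_{1}/2}\sum_{j}\ta^{\,jp_{1}(p_{2}-1/2)}=(1+M^{2})^{p_{1}/2}\,\frac{\ta^{p_{1}(p_{2}-1/2)}}{1-\ta^{p_{1}(p_{2}-1/2)}}.
\]
Since $M\le4\La$, the prefactor $(1+M^{2})^{p_{1}/2}$ is bounded by a constant depending only on $\La$ and $p_{1}$, while the last factor tends to $0$ as $\ta\to0^{+}$. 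Hence I can fix $\ta\in(0,\min(\tfrac14,\oa_{*}))$ small enough that both $\wt A_{q_{1},q_{2}}\le c$ and $\wt B_{p_{1},p_{2}}\le c$; with this $\ta$ the sequence $\oa_{j}=\ta^{j}$ lies in $(0,\oa_{*})$ and $v_{*}\ge\La$ by the previous step, which proves the proposition.

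I do not expect a genuine obstacle; the only point needing care is the order of the choices — one must first bound $M$ in terms of $\La$ alone and only then shrink $\ta$, so that the smallness of $\ta$ used to control $\wt A_{q_{1},q_{2}}$ and $\wt B_{p_{1},p_{2}}$ does not feed back into $M$. Finally, Lemma \ref{lem:compete}(b) (and the anisotropic variants used in Section \ref{sec:MixedT}) follows from the proposition via the substitution in the remark preceding it, noting that $\fc1{\apa_{1}}-\fc d{2p}>\tfrac12$ exactly when $p\in\ml C_{B}$, and that controlling one $\wt A_{q_{1},q_{2}}$ (resp.\ $\wt B_{p_{1},p_{2}}$) controls all those with the same or larger indices by part (a).
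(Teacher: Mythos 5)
Your proof is correct and follows essentially the same route as the paper's: geometrically decaying frequencies, velocities growing like $\oa_{j}^{-1/2}$ so that the weight $\min(1,\oa_{j}^{1/2},\oa_{k}^{1/2})$ preserves the separation, and the condition $p_{2}>\tfrac12$ to absorb the factor $\langle v_{j}\rangle\sim\oa_{j}^{-1/2}$ in $\wt B_{p_{1},p_{2}}$ as the ratio tends to $0$. The only (cosmetic) difference is that the paper defines $|v_{j}|$ via partial sums $\ga\sum_{\ell=2}^{j}\rho^{-\ell}$ rather than directly as $M\ta^{-j/2}$, and your explicit handling of the order of choices ($M$ bounded by $4\La$ before shrinking $\ta$) is a welcome clarification.
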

\begin{proof}
For constants $0<\rho<1$, $\ga>0$, and $\da\ge0$, let $\oa_{j}=\oa_{*}\rho^{2j}$,
and $v_{j}$ satisfies 
\[
|v_{j}|=\ga(\sum_{\ell=2}^{j}\rho^{-\ell})+\da.
\]
(The empty summation $\sum_{\ell=2}^{1}$ is understood to be zero.)
Then for $j<k$ we have
\[
\min(\oa_{j}^{1/2},\oa_{k}^{1/2})|v_{j}-v_{k}|\ge\oa_{k}^{1/2}(|v_{k}|-|v_{j}|)=\oa_{*}^{1/2}\rho^{k}\cdot\ga(\sum_{\ell=j+1}^{k}\rho^{-\ell}).
\]
Since $\rho^{k}(\sum_{\ell=j+1}^{k}\rho^{-\ell})>1$ ($\forall\rho\in(0,1)$),
$v_{*}\ge\La$ as long as 
\[
\ga\ge2\oa_{*}^{-1/2}\La.
\]
To complete the proof, it suffices to show that $\wt A_{q_{1},q_{2}},\wt B_{p_{1},p_{2}}\to0$
as $\rho\to0$. For $\wt A_{q_{1},q_{2}}$, we have
\[
\lim_{\rho\to0}\wt A_{q_{1},q_{2}}^{q_{1}}=\oa_{*}^{q_{1}q_{2}}\lim_{\rho\to0}\sum_{j}\rho^{2q_{1}q_{2}j}=0.
\]
On the other hand, since $\langle v_{j}\rangle\lesssim|v_{j}|+1=\ga(\sum_{\ell=2}^{j}\rho^{-\ell})+(\da+1)$,
\begin{align*}
\wt B_{p_{1},p_{2}}^{p_{1}} & \lesssim\sum_{j}\Bigl[\ga^{p_{1}}(\sum_{\ell=2}^{j}\rho^{-\ell})^{p_{1}}+(\da+1)^{p_{1}}\Bigr]\oa_{*}^{p_{1}p_{2}}\rho^{2p_{1}p_{2}j}\\
 & =\ga^{p_{1}}\oa_{*}^{p_{1}p_{2}}I(\rho)+(\da+1)^{p_{1}}\wt A_{p_{1},p_{2}}^{p_{1}},
\end{align*}
where 
\begin{align*}
I(\rho) & =\sum_{j}(\sum_{\ell=2}^{j}\rho^{-\ell})^{p_{1}}\oa^{2p_{1}p_{2}j}
=\sum_{j}(\sum_{\ell=2}^{j}\rho^{-\ell})^{p_{1}}\rho^{p_{1}j}\cdot\rho^{-p_{1}j}\oa^{2p_{1}p_{2}j}\\
 & =\sum_{j}(\sum_{\ell=2}^{j}\rho^{j-\ell})^{p_{1}}\rho^{2p_{1}(p_{2}-1/2)j}
\le\sum_{j}(\sum_{\ell=0}^{\infty}\rho^{\ell})^{p_{1}}\rho^{2p_{1}(p_{2}-1/2)j}\\
 & =(1-\rho)^{-p_{1}}\cdot\fc{\rho^{2p_{1}(p_{2}-1/2)}}{1-\rho^{2p_{1}(p_{2}-1/2)}}\to 0\quad\mbox{as}\quad \rho \to 0.
\end{align*}
%\begin{align*}
%I(\rho) & =\sum_{j}(\sum_{\ell=2}^{j}\rho^{-\ell})^{p_{1}}\oa^{2p_{1}p_{2}j}\\
% & =\sum_{j}(\sum_{\ell=2}^{j}\rho^{-\ell})^{p_{1}}\rho^{p_{1}j}\cdot\rho^{-p_{1}j}\oa^{2p_{1}p_{2}j}\\
% & =\sum_{j}(\sum_{\ell=2}^{j}\rho^{j-\ell})^{p_{1}}\rho^{2p_{1}(p_{2}-1/2)j}\\
% & \le\sum_{j}(\sum_{\ell=0}^{\infty}\rho^{\ell})^{p_{1}}\rho^{2p_{1}(p_{2}-1/2)j}\\
% & =(1-\rho)^{-p_{1}}\cdot\fc{\rho^{2p_{1}(p_{2}-1/2)}}{1-\rho^{2p_{1}(p_{2}-1/2)}}\to0
%\end{align*}
%as $\rho\to0$. 
\end{proof}

\section{}\label{appII}

Let $x=(x',x'')$ be as in Section \ref{sub:eDdDtrain}. One would
wonder if the $L_{x'}^{p}L_{x''}^{q}$ norm can be bounded by the
$L_{x}^{p}\cap L_{x}^{q}$ norm. This is in general not the case.
Consider a function $u:\mb R^{2}\to\mb R$ of the form 
\[
u(x,y)=1_{0<x<1}|x|^{ma}\psi(|x|^{a}y),
\]
where $m,a$ are real parameters, $\psi\in C_{c}^{\infty}(\mb R)$.
Then for $p,q\in(0,\infty)$ we have 
\begin{align*}
\|u\|_{L_{xy}^{p}} & =\|\psi\|_{L^{p}}(\int_{0}^{1}|x|^{ap(m-1/p)}\, dx)^{1/p}\\
\|u\|_{L_{xy}^{q}} & =\|\psi\|_{L^{q}}(\int_{0}^{1}|x|^{aq(m-1/q)}\, dx)^{1/q}\\
\|u\|_{L_{x}^{p}L_{y}^{q}} & =\|\psi\|_{L^{q}}(\int_{0}^{1}|x|^{ap(m-1/q)}\, dx)^{1/p}.
\end{align*}
Suppose $p>q$. Then if $0<m<1/q$, there exists $a>0$ such that 
\[
ap(m-\fc 1q)<-1<\min(ap(m-\fc 1p),aq(m-\fc 1q)),
\]
which implies $u\in L_{xy}^{p}\cap L_{xy}^{q}$ but $\|u\|_{L_{x}^{p}L_{y}^{q}}=\infty$. 

\end{appendices}

\section*{Acknowledgements}
We thank Prof.~Tetsu Mizumachi for fruitful discussions in the initial stage of this project.
Tsai's research is supported in part by NSERC grant 261356-13.

%\bibliographystyle{plain}
%\bibliography{train}

\end{document}